\definecolor{halfgray}
{gray}{0.55}
\definecolor{webgreen}
{rgb}{0,0.4,0}
\definecolor{webbrown}
{rgb}{.8,0.1,0.1}
\definecolor{red}
{rgb}{1,0,0}
\newcommand \R {{ \mathbb R}}
\def\C{{\mathbb C}}
\newcommand \Z {{ \mathbb Z}}
\newcommand \N {{ \mathbb N}}
\newcommand \T {{ \mathbb T}}
\newcommand \re {{%
\operatorname{Re}
}}
\newcommand \im {{%
\operatorname{Im}
}}
\newcommand{\SL}{%
\operatorname{SL}
}
\newcommand{\<}{{\langle}} 
\renewcommand{\>}{{\rangle}}
\newcommand{\Cal}{\mathcal}
\newtheorem{theorem}{Theorem}[section]
\newtheorem {lemma} [theorem]{Lemma}
\newtheorem {proposition}[theorem]{Proposition}
\newtheorem{corollary}[theorem]{Corollary}
\newtheorem{remark}[theorem]{Remark}
\newtheorem{definition}[theorem]{Definition}
\title[Twisted Cohomological Equations for Translation Flows]%
{Twisted Cohomological Equations \\ for  Translation Flows}
  \author{Giovanni Forni}
\address{Department  of Mathematics\\
  University of Maryland \\
  College Park, MD USA}
\email
    {gforni@math.umd.edu}
\keywords
      {Translation surfaces and flows, twisted cohomological equations, twisted invariant distributions, 
      Sobolev estimates.}
\subjclass[2010]
        {37C10, 37E35, 37A20, 30E20, 31A20.}
\date{\today}
\begin{document}

\def\echo#1{\relax}
    
\begin{abstract}
  \begin{sloppypar}
  We prove by methods of harmonic analysis a result on existence of solutions for twisted cohomological equations on translation surfaces with loss of derivatives at most $3+$ in Sobolev spaces. As a consequence we prove that product translation flows on ($3$-dimensional) translation manifolds which are products of a  (higher genus) translation surface with a (flat) circle are stable in the sense of A.~Katok. In turn, our result on product flows implies a stability result of time-$\tau$ maps of translation flows on translation surfaces. 
   \end{sloppypar}
\end{abstract}

 \maketitle
\date{\today}

 \section{Introduction}
\noindent The first result on solutions of the cohomological equation for a parabolic non-homogeneous
 (but ``locally homogeneous'') smooth flow was given by the author in~\cite{F97}  in the case of translation flows
 (and their smooth time changes) on higher genus translation surfaces, by method of  harmonic analysis based on the theory of boundary behavior of holomorphic functions. 
 
\noindent  Since then refined versions of that result have been proved by  (dynamical)
 renormalization methods based on ``spectral gap''  (and hyperbolicity) properties of the Rauzy-Veech-Zorich cocycle \cite{MMY05}, \cite{MY16}, of the Kontsevich--Zorich cocycle over the Teichm\"uller flow \cite{F07} or, more recently,   of the transfer operator of a pseudo-Anosov map on appropriate anisotropic Banach space of currents \cite{FGL}.  The renormalization approach has the immediate advantage of a refined control on the regularity loss and of more explicit  conditions of Diophantine type on the dynamics, and in particular applies to self-similar translation flows or Interval Exchange Transformations. It also gives a direct approach to results for almost all translation surfaces, while an extension to almost all directions {\it for any given
 translation surface}  had to wait for the work of J.~Chaika and A.~Eskin \cite{CE} based on the breakthrough of 
 A.~Eskin, M.~Mirzakhani  \cite{EM}, A.~Eskin, M.~Mirzakhani and A.~Mohammadi \cite{EMM} and 
 S. Filip~\cite{Fi}.  
 
 \medskip
\noindent   In this paper we apply a twisted version of the arguments of~\cite{F97} to the solution of the  twisted cohomological equation for translation flows and derive results on the cohomological
 equation for $3$-dimensional  ``translation flows'' on products of a higher genus translation surface 
 with a circle.  For these problems no renormalization approach is available at the moment, although 
 steps in that direction have been made in the work of A.~Bufetov and B.~Solomyak \cite{BS14}, \cite{BS18a},
 \cite{BS18b}, \cite{BS18c}, \cite{BS19} and of the author \cite{F19}, who  have introduced twisted versions of the Rauzy--Veech--Zorich and Kontsevich--Zorich cocycles, respectively,  and proved ``spectral gap'' results for them.
 
 \medskip
 \noindent For any translation surface $(M,h)$ (a pair of a Riemann surface $M$ and an Abelian differential
 $h$ on $M$ let $H^s_h(M)$ denote the scale of (weighted) Sobolev spaces (introduced in \cite{F97},
 and recalled below in \S \ref{sec:analysis}). For the horizontal  translation flow $\phi_\R^h$ on $M$ (of 
 generator the horizontal vector field $S$) and for any $\sigma \in \R$, let $\Cal I^s_{h, \sigma} \subset 
 H^{-s}_h(M)$  denote the space of $(S+ \imath \sigma)${\it -invariant distributions}, that is, the subspace
 $$
 \Cal I^s_{h, \sigma}:=\{ D\in H^{-s}_h(M) \vert  (S+ \imath \sigma) D=0 \in H^{-(s+1)}_h(M)\}\,.
 $$
For all $\theta\in \T$, let $h_\theta:= e^{-\imath \theta} h$ denote the rotated Abelian differential and let 
$S_\theta$ denote the generator of the horizontal translation flow on $(M, h_\theta)$.

\smallskip
 \noindent We prove the following results.
 
 \begin{theorem}  
 \label{thm:main} For any translation surface $(M,h)$, for almost all $\theta \in \T$ and for almost all $\sigma \in \R$ (with respect to the Lebesgue measure)  the following holds. 
 For all $f \in H^s_h(M)$  with $s>3$, satisfying the distributional conditions $D(f)=0$ for all $(S_\theta + \imath \sigma)$-invariant distributions $D \in H^{-s}_h(M)$, the twisted cohomological equation $(S_\theta+ \imath \sigma) u =f$ has a solution $u\in H^r_h(M)$ for all $r< s-3$, and there exists a constant  $C_{r,s} (\theta, \sigma) >0$ such that
 $$
 \vert u  \vert_r  \leq   C_{r,s} (\theta, \sigma) \vert  f \vert_s\,.
 $$
  \end{theorem}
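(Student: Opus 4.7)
\medskip
\noindent \textbf{Proof plan.} The plan is to transplant the harmonic-analytic arguments of \cite{F97} to the twisted setting: convert the twisted cohomological equation into a twisted Cauchy--Riemann problem on $M$ and exploit the boundary behavior of twisted holomorphic sections near the zeros of $h_\theta$.

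First, I would set up a twisted Cauchy--Riemann calculus. In translation coordinates adapted to $h_\theta$, writing $T_\theta$ for the generator of the vertical translation flow, one has $2\partial_z = S_\theta - \imath T_\theta$ and $2\partial_{\bar z} = S_\theta + \imath T_\theta$. Introduce the twisted operators
\[
2\partial^\sigma_{\bar z} := (S_\theta + \imath \sigma) + \imath T_\theta, \qquad 2\partial^\sigma_{z} := (S_\theta + \imath \sigma) - \imath T_\theta,
\]
viewed as the $(0,1)$ and $(1,0)$ components of a flat connection on the trivial complex line bundle over $M$ whose parallel transport along a horizontal trajectory of length $\tau$ is multiplication by $e^{\imath \sigma \tau}$. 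The weighted Sobolev scale $H^s_h(M)$ extends to sections of this bundle without essential change.

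Next, I would develop a twisted Hodge-type decomposition. Define twisted holomorphic (resp.\ anti-holomorphic) distributions in $H^{-s}_h(M)$ as kernels of $\partial^\sigma_{\bar z}$ (resp.\ $\partial^\sigma_z$). Since $S_\theta + \imath \sigma = \partial^\sigma_z + \partial^\sigma_{\bar z}$, the expectation is that every $D\in \Cal I^s_{h_\theta,\sigma}$ splits as a sum of a twisted holomorphic and a twisted anti-holomorphic distribution, so that the distributional hypothesis on $f$ becomes equivalent to the orthogonality of the twisted $1$-form $f\,h_\theta$ against the finite-dimensional space of twisted holomorphic $1$-forms. This is the natural solvability condition for the twisted equation $\partial^\sigma_{\bar z} v = \tfrac 12 f$.

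With these ingredients I would construct $u$ in two moves. First, solve $\partial^\sigma_{\bar z} v = \tfrac 12 f$ via a twisted Cauchy--Green integral, with the twisted Hodge projection absorbing the finite-dimensional obstruction. Second, use
\[
(S_\theta + \imath \sigma) v = \partial^\sigma_{z} v + \partial^\sigma_{\bar z} v
\]
together with the fact that $\partial^\sigma_z v$ is twisted holomorphic away from the zeros of $h_\theta$, so that the boundary-behavior analysis near the conical singularities developed in \cite{F97} can be carried out to extract the final $u$. The total loss of $3+$ derivatives is dominated by this boundary-value step, mirroring the mechanism of the untwisted argument.

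The main obstacle will be identifying the full-measure set of admissible $(\theta, \sigma)$. The $\theta$ side reduces, as in \cite{F97}, to Kerckhoff--Masur--Smillie unique ergodicity. The $\sigma$ side is genuinely new: one must exclude a Lebesgue-null set of ``twisted resonances'' --- values at which $\Delta_\sigma := 4 \partial^\sigma_{\bar z} \partial^\sigma_z$ lacks the spectral gap needed to invert $\partial^\sigma_{\bar z}$ continuously modulo its finite-dimensional kernel. With no renormalization scheme available (as stressed in the introduction), this step relies on a Fubini argument combined with analytic perturbation theory of $\Delta_\sigma$ in the parameter $\sigma$, and I expect it to carry the bulk of the technical burden; it is also the reason why the constant $C_{r,s}(\theta,\sigma)$ is only provided in almost-everywhere form.
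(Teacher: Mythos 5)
Your proposal correctly identifies the twisted Cauchy--Riemann operators as the entry point, but it is missing the central mechanism of the paper and misidentifies the origin of both the $\theta$- and $\sigma$-exceptional sets. The actual argument does \emph{not} rest on a twisted Cauchy--Green integral nor on boundary behavior near the conical singularities of $(M,h)$; it rests on the twisted Beurling operator $U_{h,\sigma}:=\partial^+_{h,\sigma}(\partial^-_{h,\sigma})^{-1}$, extended to a unitary operator $U_{J,\sigma}$ on $L^2_h(M)$, and on the factorization
$$
S_{\theta} +\imath \sigma_\theta =\frac{e^{-\imath\theta}}{2}\bigl(U_{J, \sigma}+e^{2\imath\theta}\bigr)\partial^-_{h,\sigma}\,,
$$
which reduces inverting $S_\theta+\imath\sigma_\theta$ to inverting $\partial^-_{h,\sigma}$ (an elliptic, CR-solvability step) and then the operator $U_{J,\sigma}+e^{2\imath\theta}$. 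The latter requires the non-tangential boundary limit of the resolvent $\mathcal R_{J,\sigma}(z)=(U_{J,\sigma}-zI)^{-1}$ as $z\to -e^{\mp 2\imath\theta}$ on $\partial D$. The Fatou/Hardy-space theory (applied through the spectral theorem, as in Lemmas~\ref{lemma:CI},~\ref{lemma:CIM} and Corollary~\ref{cor:resolvent}) shows this limit exists for Lebesgue-a.e.\ boundary point and gives $L^p$ bounds, $0<p<1$, on the non-tangential maximal function. \emph{This}, not Kerckhoff--Masur--Smillie unique ergodicity, is the source of ``almost all $\theta$''; the paper only uses the existence of a minimal direction (Maier, Aranson--Grines) to reduce to the case of a minimal vertical foliation. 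When you write that the boundary-behavior analysis of \cite{F97} takes place ``near the conical singularities,'' you are conflating the geometry of $(M,h)$ with what is actually a boundary-value problem on the unit disk in the \emph{resolvent parameter} $z$.

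The $\sigma$-side is also not as you describe. There are no ``twisted resonances'' to exclude, no analytic perturbation theory of $\Delta_\sigma$ in $\sigma$, and the constant $C_{r,s}(\theta,\sigma)$ is not ``only a.e.\ in $\sigma$'' for any spectral reason. Theorem~\ref{thm:GCEsmooth} is proved for \emph{every} $\sigma\in\R$ and almost all $\theta\in\T$, with the twist frequency appearing as $\sigma_\theta=\sigma\cos\theta$; the statement of Theorem~\ref{thm:main} with an untwisted $\sigma$ and the a.e.-$(\theta,\sigma)$ quantifier then follows from the absolute continuity of the map $(\theta,\sigma)\mapsto(\theta,\sigma\cos\theta)$ together with Fubini. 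There is a genuine analytic input in $\sigma$, but it is elsewhere: Lemma~\ref{lemma:form_comparison} establishes that the twisted Dirichlet form $Q_{h,\sigma}$ is comparable to $Q_{h,0}$ on the complement of its finite-dimensional kernel, uniformly in $\sigma$ on bounded sets, via a compactness/contradiction argument. This is what makes the twisted Cauchy--Riemann ranges closed and the partial isometry $U_{h,\sigma}$ well defined. Finally, your ``two moves'' construction does not close: after solving $\partial^\sigma_{\bar z}v=\tfrac 12 f$ you have $(S_\theta+\imath\sigma)v=\partial^\sigma_z v+\tfrac 12 f$, and $\partial^\sigma_z v$ is \emph{not} twisted holomorphic (one has $\partial^\sigma_{\bar z}\partial^\sigma_z v=\tfrac 12\partial^\sigma_z f\neq 0$ in general), so there is no evident way to ``extract'' $u$ from $v$. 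The Beurling-resolvent identity is precisely the device that makes this step rigorous, and it is absent from your plan.
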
 
\noindent  In other words, the theory of the twisted cohomological equation of translation flows is analogous,
 {\it for Lebesgue almost all twisting parameters}, to the untwisted theory of the cohomological equation
 for translation flows.
 
 \begin{remark} In the untwisted case the optimal loss of regularity of solutions of the cohomological equation  is known to be $1+$ for $L^2$ Sobolev norms, for almost all translation flows with respect to any 
 $SL(2,\R)$ invariant measure under the hypothesis of hyperbolicity of the KZ cocycle~\cite{F07}.  Marmi and Yoccoz~\cite{MY16} proved a similar, but slightly weaker,
 statement for H\"older norms. For self-similar translation flows, the loss of $1+$ derivatives for  H\"older norms should follow from the recent work of Faure, Gou\"ezel and Lanneau~\cite{FGL}, although spaces with fractional exponents are not explicitly considered in their paper. 
 
\noindent It is natural to conjecture that the optimal loss of derivatives is $1+$ also in the twisted case, and it seems plausible that the whole argument of \cite{F07} would carry over under the (equivalent ?)
 hypotheses of hyperbolicity of the  twisted cocycles introduced in~\cite{BS18c} and \cite{F19}.  At the moment the only known results on such twisted exponents are upper bounds (in particular that the top exponent is $<1$)  \cite{BS18c}, \cite{BS19}, \cite{F19}  but no lower bounds are known.
 \end{remark} 
 
\noindent  A result on the existence of solutions of the cohomological equation for twisted horocycle flows was recently proved by L.~Flaminio, the author and J.~Tanis \cite{FFT16}, who were motivated by applications to the cohomological equation for horocycle time-$\tau$ maps (see also \cite{Ta12}) and to deviation of ergodic averages for twisted horocycle integrals and  horocycle time-$\tau$ maps.  Twisted nilflows are still nilflows so the theory of twisted cohomological equations  in the nilpotent case is covered by the general results of L.~Flaminio and the author~\cite{FlaFo07}. As for results on deviation of ergodic averages for nilflows, they are related to bounds on Weyl sums for polynomials. The Heisenberg (and the general step 2) case are better understood by renormalization methods (see for instance \cite{FlaFo06}), while the higher step case is not renormalizable, hence harder (see for instance \cite{GT12}, \cite{FlaFo14}).  Results on twisted ergodic integrals of translation flows and applications to effective weak mixing were recently proved by the author \cite{F19}. 
 
 \bigskip
\noindent  For all $(s, \nu) \in \R^+\times \N$,  let $H^{s, \nu} (M\times \T)$ denote the $L^2$ Sobolev space  on $M\times \T$ with respect to  the invariant volume form $\omega_h \wedge d\phi$ and the vector fields $S$, $T$, and $\partial/\partial \phi$:  for all $s, \nu \geq 0$, we define
 $$
 H^{s,\nu} (M\times\T):= \{ f \in L^2(M\times \T, d \text{vol}) \vert   \sum_{i+j\leq s}\sum_{\ell \leq \nu} \Vert S^i T^j \frac{\partial^\ell f}{\partial \phi^\ell} \Vert_0 < +\infty\}\,;
 $$
the space  $H^{-s, -\nu}_h(M\times\T)$ is defined as the dual space of $H^{s,\nu}(M\times\T)$.

\noindent The space $L^2(M\times \T, d \text{vol})$ of the product manifold with respect to
the invariant volume form $\omega_h \wedge d\phi$ decomposes as a direct sum of the eigenspaces
$\{H^0_n \vert n\in \Z\}$ of the circle action:
$$
L^2(M\times \T, d \text{vol}) = \bigoplus_{n\in \Z}  H^0_n \,.
$$
Let now $X_{\theta, c}= S_\theta +  c  \frac{\partial}{\partial \phi}$ denote a translation vector field
on $M \times \T$, and let $\Cal I^{s,\nu}_{h_\theta, c} \subset H^{-s, -\nu}_h(M\times\T)$ denote the space of $X_{\theta, c}$-invariant distributions.  The subspace of $X_{\theta, c}$-invariant distributions
in $\Cal I^{s,\nu}_{h_\theta, c}$ supported on the Sobolev subspace of $H_n^s \subset H_n^0$ has finite
and non-zero dimension, uniformly bounded with respect to $n\in \N$. It follows that the space  $\Cal I^{s,\nu}_{h_\theta, c}$ has countable dimension. 

 \begin{theorem}  
 \label{thm:main_bis} 
 For any translation surface $(M,h)$, for almost all $\theta \in \T$ and for almost all $c \in \R$ (with respect to the Lebesgue measure)  the following holds. For all $f \in H^{s,\nu}_h(M \times \T)$  with $s>3$ and $\nu>2$,  satisfying the distributional conditions $D(f)=0$ for all $X_{\theta,c}$-invariant distributions $D \in \Cal I^{s,\nu}_{h_\theta,c} \subset H^{-s,-\nu}_h(M\times \T)$, the cohomological equation $X_{\theta,c} u =f$ has a solution $u\in H^{r, \mu}_h(M \times \T)$ for all $r< s-3$ and $\mu<\nu-2$, and there exists a constant  $C^{(\mu, \nu)}_{r,s} (\theta, c) >0$ 
 such that
 $$
 \Vert u  \Vert_{r, \mu}  \leq   C^{(\mu, \nu)}_{r,s} (\theta, c) \Vert  f \Vert_{s, \nu}\,.
 $$
  \end{theorem}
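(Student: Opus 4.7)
The plan is to reduce the cohomological equation on $M\times\T$ to a family of twisted cohomological equations on $M$ by Fourier decomposition in the circle variable $\phi$, and then to reassemble the modewise solutions provided by Theorem~\ref{thm:main}. Since $X_{\theta,c} = S_\theta + c\,\partial/\partial\phi$ commutes with the circle action, expanding
$$
f(x,\phi) \,=\, \sum_{n\in\Z} f_n(x)\, e^{\imath n \phi}, \qquad u(x,\phi) \,=\, \sum_{n\in\Z} u_n(x)\, e^{\imath n \phi},
$$
the equation $X_{\theta,c} u = f$ splits into the family $(S_\theta + \imath c n)\, u_n = f_n$, $n \in \Z$, and the product Sobolev norms satisfy $\Vert f\Vert_{s,\nu}^2 \asymp \sum_n (1+n^2)^\nu \vert f_n\vert_s^2$. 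Via the decomposition $L^2(M\times\T) = \bigoplus_n H^0_n$, every $X_{\theta,c}$-invariant distribution in $\Cal I^{s,\nu}_{h_\theta,c}$ splits as a sum of its Fourier components in $\phi$, each of which identifies (after removing the factor $e^{\imath n\phi}$) with an $(S_\theta + \imath c n)$-invariant distribution on $M$; the compatibility hypothesis on $f$ therefore translates, mode by mode, into the condition $D_n(f_n)=0$ for every $D_n\in\Cal I^s_{h_\theta, cn}$ and every $n\in\Z$.

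Next, I would apply Theorem~\ref{thm:main} to each mode. Its exceptional set of bad twist parameters $\sigma$ is Lebesgue-null in $\R$; since $\{cn \mid n\neq 0\}$ is countable, a Fubini argument shows that for Lebesgue almost every $c\in\R$ the conclusion of Theorem~\ref{thm:main} holds simultaneously at every $\sigma = cn$ with $n\neq 0$. The mode $n=0$ is covered by the untwisted theorem of~\cite{F97}. For such $(\theta,c)$, each mode equation admits a solution $u_n \in H^r_h(M)$ with
$$
\vert u_n\vert_r \,\leq\, C_{r,s}(\theta, cn)\, \vert f_n\vert_s.
$$

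The main obstacle is a quantitative, uniform estimate on $C_{r,s}(\theta,\sigma)$ as $|\sigma|\to\infty$: the $\mu<\nu-2$ loss of $\phi$-derivatives signals that one must establish a polynomial bound of the form $C_{r,s}(\theta, \sigma) \leq K(\theta)\, (1+|\sigma|)^{2}$, valid for almost every $\theta\in\T$ and for $\sigma$ in a set of full Lebesgue measure. Such a bound cannot be read off Theorem~\ref{thm:main} as a black box; I would revisit its harmonic-analysis proof — based on boundary values of holomorphic functions twisted by the character $e^{\imath\sigma t}$ along the horizontal foliation — and verify that $\sigma$ enters as a zero-order perturbation of the twisted Cauchy--Riemann operator whose contribution to the weighted $L^2$ estimates for the twisted Green kernel grows at most quadratically in $|\sigma|$. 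Every step of that construction (elliptic-type estimates for zero-order perturbations of $\bar\partial$, inversion of first-order ODEs of the form $\dot u + \imath\sigma u = g$) should lose at most a finite power of $|\sigma|$, so a polynomial bound is natural; combined with the Fubini argument above, it yields, for almost every $c$, a uniform polynomial-in-$n$ estimate on the modewise Sobolev constants.

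Granted this polynomial-growth bound, Parseval yields, for every $\mu < \nu - 2$,
$$
\Vert u\Vert_{r,\mu}^2 \,\asymp\, \sum_n (1+n^2)^\mu \vert u_n\vert_r^2 \,\leq\, \sum_n (1+n^2)^\mu\, C_{r,s}(\theta, cn)^2\, \vert f_n\vert_s^2 \,\lesssim\, K(\theta)^2\,(1+c^2)^2 \sum_n (1+n^2)^{\mu+2}\,\vert f_n\vert_s^2,
$$
which is bounded by $\bigl(C^{(\mu,\nu)}_{r,s}(\theta,c)\bigr)^2 \Vert f\Vert_{s,\nu}^2$ since $\mu + 2 < \nu$. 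This produces $u\in H^{r,\mu}_h(M\times\T)$ with the stated estimate, completing the proof modulo the uniform polynomial bound on the twisted Sobolev constants.
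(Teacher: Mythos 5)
Your reduction to the family of twisted modewise equations $(S_\theta + \imath c n)\,u_n = f_n$ via Fourier expansion in $\phi$, and your identification that summable control on the modewise Sobolev constants is the crux, match the paper's strategy (Corollary~\ref{cor:3dim}). But the route you propose for that crux — a pointwise-in-$\theta$ polynomial bound $C_{r,s}(\theta,\sigma) \leq K(\theta)(1+|\sigma|)^2$ valid for a.e.\ $(\theta,\sigma)$ — is left unproven, and it is not what the paper establishes or needs. You correctly flag it as the main obstacle, but a ``revisit the $\bar\partial$-estimates and check quadratic loss'' sketch is not enough: nothing in the harmonic-analysis machinery of \S\ref{sec:STCE} delivers a pointwise-in-$\theta$ estimate whose constant is controlled in $\sigma$; for a fixed $\theta$ the constant coming from the nontangential maximal function can be arbitrarily large, and it is genuinely unclear whether any polynomial bound in $\sigma$ holds pointwise.

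The paper sidesteps this entirely by exploiting a different uniformity. Theorem~\ref{thm:GCEsmooth} gives an $L^p(\T,{\mathcal L})$-in-$\theta$ bound
\[
\Bigl(\int_\T \vert \Cal U_\theta(f)\vert_r^p\, d\theta\Bigr)^{1/p} \leq C_{r,s}\,\vert f\vert_s
\]
in which the constant $C_{r,s}$ is \emph{uniform in $\sigma$} (this traces back to Lemma~\ref{lemma:aprioribound_smooth}, whose $L^p$ bound $\vert A_{h,\sigma}\vert_p \leq B_h$ holds for all $\sigma$). Applying this with $\sigma = \sigma^{c,n}$ and a Chebyshev inequality in $\theta$, one obtains, for each $\epsilon > 0$ and each $n \neq 0$, a set $\Cal F_{c,n}(\epsilon)\subset\T$ of measure $\geq 1 - \epsilon/n^2$ on which $\vert u_n\vert_r \leq C_{r,s}\,\epsilon^{-1/p}\, n^{2/p}\,\vert f_n\vert_s$. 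Intersecting over $n$ yields a set $\Cal F_c(\epsilon)$ of measure $\geq 1 - C\epsilon$ on which the modewise constants grow like $n^{2/p}$; summing the Fourier series then costs $2/p$ derivatives in $\phi$. Since $p<1$ may be chosen arbitrarily close to $1$ (subject to the constraints of Lemma~\ref{lemma:aprioribound_smooth}), one reaches every $\mu < \nu - 2$. Thus the ``polynomial growth in $n$'' that makes the Fourier series converge is manufactured by the Chebyshev argument from a $\sigma$-uniform $L^p$ bound, not read off a pointwise estimate. This is the idea your proposal is missing; without it (or without actually proving your conjectured polynomial bound), the argument has a gap at the decisive step.
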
 
  
 \noindent  Theorem \ref{thm:main_bis} states that   for almost all $(\theta, c)\in \T \times \R$, the flow of the vector field $X_{\theta,c}$ on $M\times \T$ is stable in the sense of A.~Katok.  In fact, ours  is the first example of a stable non-homogeneous (although locally homogeneous), non (partially) hyperbolic flow on a manifold of dimension at least $3$.
 Indeed, we recall that the only known examples of stable (and renormalizable)  $3$-dimensional flows are (up to smooth conjugacies and time-changes) homogeneous flows: horocycle flows of hyperbolic surfaces~\cite{FlaFo03} and  Heisenberg nilflows \cite{FlaFo06}.   However, there has been recent
progress (although conditional)  on ``Ruelle asymptotics''  and deviation of ergodic averages for horocycle flows for negatively curved metrics on surfaces \cite{Ad} (see also \cite{FG18}), hence a proof of smooth stability (at least in low regularity) for such flows seems within reach of current methods for the analysis for the transfer operator of hyperbolic flows, along the lines of the work of P.~Giulietti and C.~Liverani \cite{GL} for Anosov maps of tori.

\medskip
\noindent By a well-known argument we can derive from our result on the cohomological equation for the product flows a result on the cohomological equation for the time-$\tau$ maps of translation flows.  Let $\Phi^\tau_\theta$ denote the time-$\tau$ map of the horizontal translation flow of the Abelian differential $h_\theta$ on $M$. For all $s\geq 0$, let  $\Cal I^{s}_{h_\theta, \tau} \subset H^{-s}_h(M)$ denote the space of $\Phi^\tau_\theta$-invariant distributions, that is, the space of all distributions in $H^{-s}_h(M)$ which vanish  on the subspace
$$
\overline{ \{  u \circ \Phi^\tau_\theta - u \vert   u\in H^\infty_h(M)\} \cap H^s_h(M)}   \subset H^s_h(M) \,.
$$
We have the following result:

\begin{corollary}
\label{cor:main}
For any translation surface $(M,h)$, for almost all $\theta \in \T$ and for almost all $T \in \R$ (with respect to the Lebesgue measure)  the following holds.   For all $f \in H^{s}_h(M)$  with $s>3$,  satisfying the distributional conditions $D(f)=0$ for all $\Phi^\tau_\theta$-invariant distributions $D \in \Cal I^{s}_{h_\theta, \tau} \subset H^{-s}_h(M)$, the cohomological equation $ u\circ \Phi^\tau_\theta -u =f$ has a solution $u\in H^{r}_h(M)$, for all
$r< s-3$, and there exists a constant  $C_{r,s} (\theta, \tau) >0$  such that
 $$
 \vert u  \vert_r  \leq   C_{r,s} (\theta, \tau) \vert  f \vert_s\,.
 $$
\end{corollary}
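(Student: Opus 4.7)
My plan is to deduce Corollary \ref{cor:main} from Theorem \ref{thm:main_bis} by the standard suspension reduction. Set $c := 1/\tau$. The flow of $X_{\theta, c} = S_\theta + c\,\partial_\phi$ on $M \times \T$ is equivalent, via the diffeomorphism $\Psi(x, t) := (\Phi^t_\theta x,  t/\tau \bmod 1)$, to the vertical flow $\partial_t$ on the suspension manifold $\tilde M_\tau := M \times [0, \tau]/\sim$, where $(x, \tau) \sim (\Phi^\tau_\theta x, 0)$; since $\tau \mapsto 1/\tau$ is a diffeomorphism of $\R \setminus \{0\}$, the ``almost every $c$'' conclusion of Theorem \ref{thm:main_bis} translates to an ``almost every $\tau$'' statement. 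Fix $\chi \in C^\infty_c((0, 1))$, extended by zero to $\T$, with $\int_0^1 \chi(\phi)\, d\phi = 1$, and lift $f$ to $\tilde F(x, t) := \tau^{-1} f(x) \chi(t/\tau)$ on $\tilde M_\tau$ (smooth by the support of $\chi$). Transporting via $\Psi$ yields $F := \tilde F \circ \Psi^{-1}$ on $M \times \T$, explicitly
\[
F(y, \phi) = \tau^{-1} f(\Phi^{-\tau \phi}_\theta y)\, \chi(\phi),
\]
and since $S$ and $T$ commute with the translation flow, $F \in H^{s', \nu'}_h(M \times \T)$ whenever $s' + \nu' \leq s$.

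The key structural step is to verify the distributional hypothesis $D(F) = 0$ for every $X_{\theta, c}$-invariant distribution $D$. Pullback by $\Psi$ identifies such $D$ with $\partial_t$-invariant distributions $\tilde D$ on $\tilde M_\tau$; and integration along the fibre, $G \mapsto \int_0^\tau G(\cdot, t)\, dt$, maps $\partial_t$-coboundaries on $\tilde M_\tau$ precisely to $\Phi^\tau_\theta$-coboundaries on $M$, whence each such $\tilde D$ corresponds to a $\Phi^\tau_\theta$-invariant distribution $d$ on $M$ through the pairing $\tilde D(G) = \< d,\, \int_0^\tau G(\cdot, t)\, dt \>$. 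By the normalization of $\chi$, $\int_0^\tau \tilde F(\cdot, t)\, dt = f$, so $D(F) = \tilde D(\tilde F) = \< d, f \> = 0$ by the hypothesis on $f$. Theorem \ref{thm:main_bis} then furnishes $U \in H^{r', \mu'}_h(M \times \T)$ with $X_{\theta, c} U = F$, together with the quantitative bound.

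Setting $u(x) := U(x, 0)$ --- well-defined and in $H^{r'}_h(M)$ by the Sobolev embedding $H^{\mu'}(\T) \hookrightarrow C(\T)$ for $\mu' > 1/2$ --- and integrating $X_{\theta, c} U = F$ along the orbit $s \mapsto (\Phi^s_\theta x, cs)$ from $s = 0$ to $s = \tau$, one computes
\[
u(\Phi^\tau_\theta x) - u(x) = \int_0^\tau F(\Phi^s_\theta x, cs)\, ds = \tau^{-1} f(x) \int_0^\tau \chi(cs)\, ds = f(x),
\]
using $c\tau = 1$, the cancellation $\Phi^{-\tau c s}_\theta \circ \Phi^s_\theta = \mathrm{id}$, and $\int_0^1 \chi = 1$; the Sobolev estimate on $|u|_r$ is inherited from that on $U$.

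The main obstacle I expect is the careful Sobolev bookkeeping needed to recover the stated loss $r < s - 3$. Each $\partial_\phi$-derivative of $F$ produces, via the chain rule on $f \circ \Phi^{-\tau \phi}_\theta$, an $S_\theta$-derivative of $f$: the two indices of the Sobolev scale on $M \times \T$ are coupled in the lift, and obtaining $F \in H^{s', \nu'}_h$ requires roughly $s' + \nu'$ derivatives of $f$. Balancing $s' > 3$, $\nu' > 2$ from Theorem \ref{thm:main_bis} and $\mu' > 1/2$ for the trace against $s' + \nu' \leq s$ needs a delicate argument --- possibly via a sharper lift of $f$ exploiting the specific structure of $\Phi^{-\tau\phi}_\theta$, or via a Fourier-mode-by-mode application of Theorem \ref{thm:main} --- in order to match the claimed loss $r < s - 3$.
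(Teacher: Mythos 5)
Your plan is essentially the same as the paper's: pass to the flow box over the transversal $M\times\{\phi_0\}\subset M\times\T$ (equivalently, the suspension with constant roof $\tau$), lift $f$ to a bump function $F$ supported in a flow-time interval away from the transversal, solve $X_{\theta,c}U=F$ by Theorem~\ref{thm:main_bis}, and recover $u$ as the trace of $U$ on $M\times\{\phi_0\}$. Your identification of $X_{\theta,c}$-invariant distributions with $\Phi^\tau_\theta$-invariant distributions via the pairing $\tilde D(G)=\langle d,\int_0^\tau G(\cdot,t)\,dt\rangle$ is also the mechanism used in the paper, phrased there as $D_M(f):=D(F(f))$.

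The essential content of your write-up is the reservation in the final paragraph, and you are right to raise it: it is a real gap, not mere bookkeeping to be smoothed over. The paper asserts $F(f)\in H^{s,\infty}_h(M\times\T)$, but this is not correct. Because $F(f)(y,\phi)=\tau^{-1}f\bigl(\Phi^{-\tau(\phi-\phi_0)}_\theta y\bigr)\chi(\phi)$ carries the flow inside the argument of $f$, every $\partial_\phi$ produces an $S_\theta f$ term, so $S^iT^j\partial_\phi^\ell F(f)$ involves derivatives of $f$ of total order up to $i+j+\ell$, and hence $F(f)\in H^{s',\nu'}_h(M\times\T)$ only when $s'+\nu'\leq s$, exactly as you compute. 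Feeding this into Theorem~\ref{thm:main_bis} (which needs $s'>3$, $\nu'>2$, and yields $\mu'<\nu'-2$) and then into the Sobolev trace theorem (which needs $\mu'>1/2$, hence $\nu'>5/2$) gives $r<s'-3<s-\nu'-3<s-11/2$, not $r<s-3$. Neither your proposal nor the paper's own argument, as written, recovers the stated exponent. Either the claimed loss in the Corollary must be weakened accordingly, or a genuinely different device is needed (a lift that is smooth in $\phi$ and still integrates to $f$ along return orbits does not appear to exist, as you yourself noted the chain rule forces the coupling). You should state this plainly as a gap rather than as a ``delicate argument'' to be filled in later.
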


\bigskip
\noindent The paper is organized as follows. In section \ref{sec:analysis} we recall basic facts of analysis on translation surfaces as developed by the author in \cite{F97} and \cite{F07}.  In section~\ref{sec:Beurling} we introduce a twisted version of the Beurling-type isometry of the $L^2$ space of a translation surface defined in \cite{F97}
(see also \cite{F02}).  Section~\ref{sec:spectral_unitary} recalls results from the theory of boundary behavior of Cauchy integrals of finite measures on the circle and applications to the spectral theory of general unitary operators on Hilbert spaces, following~\cite{F97}.  In section~\ref{sec:STCE} we prove the core result about the existence of  solutions of the cohomological equation, by the following the presentation given
in \cite{F07} of the original argument of \cite{F97}, generalized to the twisted case. Subsection \ref{DS}  is devoted to the core result about existence of distributional solutions, subsection~\ref{TIDBC} 
to finiteness result for the spaces of twisted invariant distributions and, finally, subsection \ref{SS} to the proof
of the main results on existence of smooth solutions for the twisted cohomological equations, the product flows
and the time-$\tau$ maps.
 
 \section*{Acknowledgements} \noindent  We wish to thank Pascal Hubert and Carlos Matheus for their interest in this
 work and their encouragement.
 This research was supported by the NSF grant DMS 1600687 and by a Research Chair of  the Fondation Sciences Math\'ematiques de Paris (FSMP). The author is grateful to the Institut de Math\'ematiques de Jussieu (IMJ) for its hospitality.
 
 \section{Analysis on translation surfaces}
 \label{sec:analysis}
 
 \noindent  This section gathers basic results on the flat Laplacian of  a translation surface, following~\cite{F97}, \S 2  and \S 3, and~\cite{F07}, \S 2.
 
Let $\Sigma_{h}:=\{p_{1},\dots,p_{\sigma}\}\subset M_h$ be the set of zeros of the holomorphic Abelian differential~$h$ on a Riemann surface $M$, of even orders $(k_{1},\dots,k_{\sigma})$ respectively with~$k_{1} + \dots + k_{\sigma}=2g-2$.  Let  $R_{h}:=\vert h\vert$ be the flat metric with cone singularities at $\Sigma_{h}$ induced by the Abelian  differential $h$ on $M$ and let $\omega_h$ denote its area form.  With respect to a holomorphic local coordinate $z=x+\imath y$ at a regular point, the Abelian differential 
$h$ has the form $h=\phi(z)dz$, where $\phi$ is a locally defined holomorphic function, and, consequently, 
\begin{equation}
\label{eq:metric}
 R_h= |\phi(z)| (dx^2 +dy^2)^{1/2}\,,\quad  \omega_h=|\phi(z)|^2\,dx\wedge dy\,.
\end{equation}
\noindent The metric $R_{h}$ is flat, degenerate at the finite set $\Sigma_{h}$ of zeroes of $h$ 
and has trivial holonomy, hence $h$ induces a structure of {\it translation 
surface}  on $M$. 

 \smallskip
  \noindent The weighted $L^{2}$ space is the standard space $L^{2}_{h}(M):= L^{2}(M,\omega_{h})$  with respect to the area element $\omega_{h}$  of the metric $R_{h}$. Hence the weighted $L^{2}$ 
  norm $\vert \cdot\vert_{0}$ are induced by the hermitian product $\<\cdot, \cdot\>_{h}$ defined as follows: for all functions $u$,$v\in L^{2}_{h}(M)$,
 \begin{equation}
 \label{eq:0norm}
 \< u ,v\>_{h} :=  \int _{M} u\,\bar v \, \omega_{h}\,\,.
 \end{equation}
 Let $\mathcal  F_{h}$ be the {\it horizontal foliation},  $\mathcal  F_{-h}$ be the {\it vertical foliation} for
the holomorphic Abelian differential $h$ on $M$. The foliations $\mathcal  F_{h}$ and $\mathcal  F_{-h}$
are measured foliations (in the Thurston's sense):  $\mathcal  F_{h}$ is the foliation given by the equation $\im h=0$ endowed with the invariant transverse measure 
$\vert \im h \vert$,  $\mathcal  F_{-h}$ is the foliation given  by the equation $\re h=0$ endowed with the 
invariant transverse measure $\vert \re h \vert$.  Since the metric $R_h$ is flat with trivial holonomy,  there exist  commuting vector fields $S_h$
and $T_h$ on $M\setminus \Sigma_{h}$ such that 
\begin{enumerate}
\item The frame $\{S_h,T_h\}$ is a parallel  orthonormal frame with respect to the metric $R_{h}$ for the restriction of the tangent bundle $TM$ to the complement $M\setminus \Sigma_{h}$  of the set of cone points;
\item the vector field $S_{h}$ is tangent to the horizontal foliation $\mathcal  F_{h}$, the vector field $T_{h}$
 is tangent to the vertical foliation $\mathcal  F_{-h}$ on $M\setminus \Sigma_{h}$ \cite{F97}, \cite{F07}. 
 \end{enumerate}
 In the following  we will often drop the dependence of the vector fields $S_{h}$, $T_{h}$ on the Abelian differential in order to simplify the notation. The symbols
$\mathcal  L _{S}$, $\mathcal  L _{T}$ denote the Lie derivatives, and $\imath_S$, $\imath_T$ the contraction operators with respect to the vector field  $S$, $T$ on $M\setminus \Sigma_h$. We have:
\begin{enumerate}
\item $\mathcal  L _{S} \omega_{h} = \mathcal  L_{T}\omega_{h} =0$ on $M\setminus \Sigma_{h}$ , that is, the 
area form $\omega_{h}$ is invariant with respect to the flows generated by $S$ and $T$;
\item $\imath_{S} \omega_{h}= \re h$ and $\imath_{T} \omega_{h}= \im h$, hence
the $1$-forms $\eta_{S} :=\imath_{S} \omega_{h}$,  $\eta_{T} :=-\imath_{T} \omega_{h}$ are smooth 
and closed on $M$ and $\omega_{h}= \eta_{T}\wedge \eta_{S}$.
\end{enumerate}
It follows from the area-preserving property $(1)$ that the vector field $S$, $T$ are anti-symmetric
as densely defined operators on $L^{2}_{h}(M)$, that is, for all functions $u$, $v \in C_0^{\infty} (M\setminus\Sigma_h)$,  (see \cite{F97}, $(2.5)$),
\begin{equation}
\label{eq:antisymm}
\< Su ,v\>_{h} = -\< u ,Sv\>_{h}\,\,, \quad \text{ respectively } \,\, \< Tu ,v\>_{h} =-\< u ,Tv\>_{h} \,\,.
\end{equation}
In fact, by Nelson's criterion~\cite{Ne59}, Lemma 3.10, the anti-symmetric operators $S$, $T$ are {\it essentially skew-adjoint} on the Hilbert space $L^{2}_{h}(M)$.

\smallskip
\noindent The {\it weighted Sobolev norms} $\vert \cdot\vert_{k}$, with integer exponent $k>0$, are the euclidean norms, introduced in \cite{F97}, induced by the hermitian product defined as follows: for all 
functions $u$, $v\in L^{2}_{h}(M)$,
\begin{equation}
 \label{eq:knorm}
 \< u,v \>_{k} :=   \frac{1}{2}\sum_{i+j\leq k}\<S^{i}T^{j}u, S^{i}T^{j}v\>_{h} + 
 \<T^{i}S^{j}u, T^{i}S^{j}v\>_{h}\,.
 \end{equation}
The  {\it weighted Sobolev norms }with integer exponent $-k<0$ are defined to be the dual norms.
The {\it weighted Sobolev space }$H^{k}_h(M)$, with integer exponent $k\in\Z$, is the 
Hilbert space obtained as the completion with respect to the norm $\vert \cdot  \vert _{k}$ of the 
maximal {\it common invariant domain}
 \begin{equation}
 \label{eq:cid}
 H^{\infty}_h(M):= \bigcap_{i,j\in \N}  D( \bar S^i \bar T^j) \cap D( \bar T^i \bar S^j)\,.
 \end{equation}
 of the closures $\bar S$, $\bar T$ of the essentially skew-adjoint operators $S$,  $T$ on $L^2_h(M)$.
 The weighted Sobolev space $H^{-k}_h(M)$ is isomorphic to  the dual space of the Hilbert space $H^{k}_h(M)$, for all $k\in \Z$. 
 
 \smallskip
 \noindent  Since the vector fields $S$, $T$ commute  as operators on $C_0^\infty(M\setminus\Sigma_{h})$, the following weak commutation identity holds on $M$. 
  \begin{lemma} 
   \label{lemma:commut}
  (\cite{F97}, Lemma 3.1)  
  For all functions $u$,$v  \in H^{1}_{h}(M)$,
 \begin{equation}
 \label{eq:commut}
\<Su,Tv\>_{h} =  \<Tu,Sv\>_{h}\,\,.
 \end{equation}
  \end{lemma}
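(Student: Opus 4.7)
The plan is to obtain the identity first for compactly supported test functions on the complement $M\setminus \Sigma_h$, by Stokes' theorem applied to an exact top-degree form, and then to extend to $H^1_h(M)$ by a density argument exploiting the fact that the sesquilinear forms $(u,v)\mapsto \<Su,Tv\>_h$ and $(u,v)\mapsto \<Tu,Sv\>_h$ are continuous on $H^1_h(M)\times H^1_h(M)$.

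For the first step, the orthonormality of $\{S,T\}$ with respect to $R_h$ together with the formulas $\eta_S = \imath_S\omega_h$, $\eta_T = -\imath_T\omega_h$, $\omega_h = \eta_T\wedge \eta_S$ shows that $\{\eta_T,\eta_S\}$ is the dual coframe to $\{S,T\}$, since $\eta_S(T) = \omega_h(S,T)=1$ and $\eta_T(S) = -\omega_h(T,S) = 1$, while each vanishes on its paired field. Consequently, for every smooth $f$ on $M\setminus\Sigma_h$,
$$
df \;=\; (Sf)\,\eta_T + (Tf)\,\eta_S,
$$
and a direct bilinear computation yields
$$
du \wedge d\bar v \;=\; \bigl[(Su)(\overline{Tv}) - (Tu)(\overline{Sv})\bigr]\, \eta_T\wedge \eta_S \;=\; \bigl[(Su)(\overline{Tv}) - (Tu)(\overline{Sv})\bigr]\,\omega_h.
$$
For $u,v\in C_0^\infty(M\setminus\Sigma_h)$, the $1$-form $u\,d\bar v$ has compact support in $M\setminus\Sigma_h$ and $d(u\,d\bar v) = du\wedge d\bar v$, so Stokes' theorem gives
$$
\<Su,Tv\>_h - \<Tu,Sv\>_h \;=\; \int_M du\wedge d\bar v \;=\; 0.
$$

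To extend the identity from $C_0^\infty(M\setminus\Sigma_h)$ to $H^1_h(M)$, observe that both sides of \eqref{eq:commut} are jointly continuous on $H^1_h(M)\times H^1_h(M)$ by Cauchy--Schwarz, since $\vert \cdot\vert_1$ dominates $\|S\cdot\|_0$ and $\|T\cdot\|_0$. It therefore suffices to approximate any element of $H^\infty_h(M)$ in the $\vert\cdot\vert_1$-norm by a sequence in $C_0^\infty(M\setminus\Sigma_h)$. This is accomplished by multiplying by logarithmic cutoffs $\chi_\epsilon$ supported away from $\Sigma_h$: in a local coordinate $z$ at a zero of order $k$, one has $R_h\asymp |z|^k(dx^2+dy^2)^{1/2}$ and $\omega_h\asymp |z|^{2k}\,dx\wedge dy$, so a cutoff that is constant outside an annulus $\{\epsilon\le |z|\le \epsilon^{1/2}\}$ with logarithmic radial variation satisfies $\int_M |d\chi_\epsilon|_{R_h}^2\,\omega_h \to 0$ as $\epsilon\to 0$. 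Combined with the essential skew-adjointness of $S$ and $T$ on $C_0^\infty(M\setminus\Sigma_h)$ invoked in the excerpt via Nelson's criterion, this produces the required $\vert\cdot\vert_1$-approximation, and passing to the limit completes the proof.

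The main technical obstacle is precisely the cutoff analysis near the cone points $\Sigma_h$, where $S$ and $T$ are singular with respect to any smooth background metric on $M$; however, the weighting in $\omega_h$ exactly compensates this singularity and makes the standard capacity estimate for punctures go through. Everything else is a formal consequence of Stokes and continuity.
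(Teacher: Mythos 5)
Your proof strategy is correct and is, up to presentation, the standard argument: establish the identity for test functions compactly supported away from $\Sigma_h$ and then pass to $H^1_h(M)$ by density. The Stokes' theorem packaging of the first step is a clean reformulation of the usual ``integrate by parts twice'' computation: for $u,v\in C_0^\infty(M\setminus\Sigma_h)$, one can equally write $\<Su,Tv\>_h = -\<u,STv\>_h = -\<u,TSv\>_h = \<Tu,Sv\>_h$ using the skew-symmetry \eqref{eq:antisymm} and the commutation $[S,T]=0$ on test functions. Your version makes the cancellation visible as $\int_M d(u\,d\bar v)=0$, and your verification that $\{\eta_T,\eta_S\}$ is the coframe dual to $\{S,T\}$ and that $du\wedge d\bar v = \bigl[(Su)\overline{(Tv)} - (Tu)\overline{(Sv)}\bigr]\omega_h$ is accurate.

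There is, however, one place where the argument is stated imprecisely and should be repaired. The logarithmic cutoff construction gives $\Vert d\chi_\epsilon\Vert_{L^2_h}\to 0$, but what the $|\cdot|_1$-approximation of a given $u\in H^\infty_h(M)$ actually requires is $\Vert (S\chi_\epsilon)u\Vert_0\to 0$ and $\Vert (T\chi_\epsilon)u\Vert_0\to 0$, since $S(\chi_\epsilon u)=\chi_\epsilon Su + (S\chi_\epsilon)u$. This does \emph{not} follow from $\Vert d\chi_\epsilon\Vert_{L^2_h}\to 0$ alone, nor does it follow from ``essential skew-adjointness of $S$ and $T$ via Nelson's criterion,'' which is a statement about the closures of the operators and has no bearing on this estimate. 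What is needed is a sup bound on $u$: if $u\in L^\infty$, then $\Vert (S\chi_\epsilon)u\Vert_0 \le \Vert u\Vert_\infty\Vert d\chi_\epsilon\Vert_{L^2_h}\to 0$. The boundedness of functions in $H^\infty_h(M)$ is indeed available here, e.g.\ from the comparison $H^2_h(M)\subset \bar H^2_h(M)\subset H^2(M)\subset L^\infty(M)$ provided by Lemma~\ref{lemma:comparison}, but this is a substantive input and should be invoked explicitly rather than attributed to Nelson's criterion. With that substitution (and the routine mollification of $\chi_\epsilon u$ to land in $C_0^\infty(M\setminus\Sigma_h)$), the density step is complete and your proof is correct.
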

  \smallskip
  \noindent By the anti-symmetry property~\eqref{eq:antisymm} and the commutativity property 
  \eqref{eq:commut},  the frame $\{S,T\}$ yields an essentially skew-adjoint action of the Lie 
  algebra $\R^{2}$ on the Hilbert space $L^{2}_{h}(M)$ with common domain $H^{1}_{h}(M)$.  
 
  If $\Sigma_{h}\not=\emptyset$, the (flat) Riemannian manifold $(M\setminus\Sigma_{h}, R_{h})$ 
  is not complete, hence its Laplacian $\Delta_h$ is not essentially self-adjoint on $C_{0}^{\infty}(M \setminus\Sigma_{h})$. 
  By a theorem of Nelson~\cite{Ne59}, \S 9, this is equivalent to the non-integrability of the action of~$\R^{2}$ 
  as a Lie algebra (to an action of $\R^{2}$ as a Lie group). 
 
 \medskip
 \noindent  Following \cite{F97}, the Fourier analysis on the flat surface $(M,h)$ will be based 
 on a canonical self-adjoint extension $\Delta_h^F$ of the Laplacian $\Delta_h$,  called  the 
 {\it Friedrichs extension},  which is uniquely determined by  the {\it Dirichlet hermitian form }$\mathcal  Q:H^{1}_{h}(M)\times H^{1}_{h}(M) \to \C$. We recall that, for all $u$, $v \in H^{1}_{h}(M)$,
 \begin{equation}
 \label{eq:Dirichlet}
 \mathcal  Q(u,v) := \<Su,Sv\>_{h} \,+\, \<Tu,Tv\>_{h} \,\,. 
 \end{equation}
   \begin{theorem} (\cite{F97}, Th. 2.3)
   \label{thm:Dirichlet}
  The hermitian form $\mathcal  Q$ on $L^{2}_{h}(M)$ has the following spectral properties:
  \begin{enumerate}
  \item $\mathcal  Q$ is positive semi-definite and the set $\text{\rm EV}(\mathcal  Q)$ of its eigenvalues is a  
  discrete subset of $[0,+\infty)$;
  \item Each eigenvalue has finite multiplicity, in particular $0\in \text{\rm EV}(\mathcal  Q)$ is simple and the kernel of $\mathcal  Q$ consists only of constant functions;
  \item The space $L^{2}_{h}(M)$ splits as the orthogonal sum of the eigenspaces. In addition,
  all eigenfunctions are $C^{\infty}$ (real analytic) on $M$.
   \end{enumerate} 
  \end {theorem}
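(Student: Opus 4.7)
The plan is to build $\Delta_h^F$ from $\mathcal{Q}$ via the standard Friedrichs construction, then to use compactness of a Rellich-type embedding to deduce a discrete spectrum, and finally to extract smoothness of eigenfunctions from the fact that in natural coordinates the Dirichlet form is just the Euclidean one.

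First I would verify that $\mathcal{Q}$ is a densely defined, closed, positive semi-definite hermitian form on $L^2_h(M)$ with form domain $H^1_h(M)$. Positivity and density follow at once from the definition, while closedness is precisely the statement that $H^1_h(M)$ is the completion of $H^\infty_h(M)$ in the graph norm $\vert u \vert_0^2 + \mathcal{Q}(u,u)$. The classical Friedrichs representation theorem then produces a unique nonnegative self-adjoint operator $\Delta_h^F$ on $L^2_h(M)$ with $D((\Delta_h^F)^{1/2}) = H^1_h(M)$ and $\mathcal{Q}(u,v) = \langle (\Delta_h^F)^{1/2} u, (\Delta_h^F)^{1/2} v\rangle_h$.

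The heart of the argument is to establish that the inclusion $H^1_h(M) \hookrightarrow L^2_h(M)$ is compact. Away from $\Sigma_h$ this is standard Rellich, since on any open $U \subset M \setminus \Sigma_h$ of compact closure the natural coordinates of $h$ identify $\omega_h$ with Lebesgue measure and $\{S,T\}$ with $\{\partial_x, \partial_y\}$. Near each zero $p_j$ of order $k_j$ I would pass to an adapted local coordinate $w$ in which $h = w^{k_j} dw$, so that the local model is a flat cone of total angle $2\pi(k_j+1)$ and finite area; on such a cone, an $H^1_h$-bounded family is $L^2_h$-precompact via a polar-coordinate Fourier expansion in the angular variable combined with a one-dimensional Rellich estimate, the weight $\vert w \vert^{2 k_j}$ being integrable. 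A finite cover of $M$ by natural charts and cone neighborhoods and a diagonal extraction yield global compactness. Hence $(\Delta_h^F + 1)^{-1}$ is a compact self-adjoint operator, its spectrum is discrete with eigenvalues of finite multiplicity accumulating only at $0$, and $L^2_h(M)$ decomposes as the orthogonal Hilbert sum of the eigenspaces, giving (1) and most of (2) and (3).

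The simplicity of $0$ is immediate once I observe that $u \in \ker \mathcal{Q}$ forces $Su = Tu = 0$ in $L^2_h(M)$; since $\{S,T\}$ is a parallel frame on the connected open set $M \setminus \Sigma_h$, such $u$ is constant there, hence constant on $M$. For the regularity part of (3), I would again use the natural coordinates on $M \setminus \Sigma_h$: in them $\Delta_h^F$ is the Euclidean Laplacian, elliptic with real-analytic coefficients, so interior elliptic regularity gives real analyticity of eigenfunctions away from $\Sigma_h$. To extend smoothness across a cone point, I would analyze $\Delta u = \lambda u$ on the flat cone model by separation of variables in polar coordinates, and note that the Friedrichs boundary condition at the apex selects only those Bessel modes that are bounded at the origin, producing convergent power series in $w$ and thus smooth (in fact analytic) extensions at each $p_j$.

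The main obstacle is the interplay between the compact embedding and the boundary behavior at $\Sigma_h$: one has to show that the cone singularities are \emph{mild enough} for $H^1_h$ to still embed compactly in $L^2_h$ and for Friedrichs eigenfunctions to extend smoothly across them, rather than exhibiting the weaker $\vert w \vert^\alpha$-type singularities that a different self-adjoint extension would permit.
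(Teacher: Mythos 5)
Your outline is correct and reaches all the stated conclusions, but the cone-by-cone analysis takes a more hands-on route than the argument in \cite{F97} that the paper is citing. That argument exploits conformal invariance of the Dirichlet integral in dimension two: $\mathcal Q$ coincides with the Dirichlet form of any smooth metric in the conformal class of $R_h$, which gives the identity $H^1_h(M)=H^1(M)$ (recorded here as Lemma~\ref{lemma:comparison}, part~(2)); compactness of $H^1_h(M)\hookrightarrow L^2_h(M)$ is then the chain $H^1(M)\hookrightarrow L^2(M)\hookrightarrow L^2_h(M)$, the first arrow being Rellich on the compact surface, the second the continuous inclusion coming from $\omega_h=|\phi|^2\omega\leq C\,\omega$. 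Regularity is likewise obtained globally: $\Delta_h u=\lambda u$ rewrites in any holomorphic chart as $\Delta u=\lambda\,|\phi|^2 u$, an elliptic equation with real-analytic coefficients on all of $M$, including $\Sigma_h$, so eigenfunctions are real-analytic with no separate treatment of the cone points.

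If you insist on separation of variables near $\Sigma_h$, be aware that ``convergent power series in $w$'' is not what the Bessel analysis delivers. The Friedrichs condition on the cone of angle $2\pi(k_j+1)$ selects modes proportional to $J_{|m|/(k_j+1)}(\sqrt\lambda\,\rho)\,e^{\imath m\psi/(k_j+1)}$, whose leading behaviour is $\rho^{|m|/(k_j+1)}e^{\imath m\psi/(k_j+1)}$; these are not holomorphic in $w=\rho e^{\imath\psi}$, and for a generic cone angle they have genuine fractional-power singularities, so the Friedrichs eigenfunctions are merely H\"older at the apex. What rescues smoothness is precisely that the cone angle here is the \emph{integer} multiple $2\pi(k_j+1)$ of $2\pi$: under the branched covering $w\propto z^{k_j+1}$ one has $\rho^{|m|/(k_j+1)}e^{\imath m\psi/(k_j+1)}\propto z^m$ (or $\bar z^{|m|}$), and the higher Bessel corrections come in powers of $\rho^2\propto(z\bar z)^{k_j+1}$, so each mode is real-analytic in the surface coordinate $z$. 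You should make this integrality explicit, and you should also justify that an $H^1_h$ eigenfunction is locally integrable as a distribution on $M$ (not merely in the degenerate weighted space $L^2_h$) before summing the modes or invoking elliptic theory; the conformal-rescaling argument dispenses with both points at once.
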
 
  \noindent The {\it Weyl asymptotics }holds for the eigenvalue  spectrum of the Dirichlet form . For any $\Lambda>0$, let $N_{h}(\Lambda):=\text{\rm card} \{ \lambda \in \text{\rm EV}(\mathcal  Q)\,/\, \lambda \leq \Lambda \}$, where each eigenvalue $\lambda \in \text{\rm EV}(\mathcal  Q)$ is counted according to its multiplicity. 
  \begin{theorem} (\cite{F97}, Th. 2.5) 
    \label{thm:Weyl}
  There exists a constant $C>0$ such that
   \begin{equation}
 \label{eq:Weyl}
\lim_{\Lambda\to + \infty} \,\frac {N_{h}(\Lambda)}{\Lambda} \,\, = \,\, \text{\rm vol}(M,R_{h})\,\,. 
 \end{equation}
 \end {theorem}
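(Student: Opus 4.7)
The plan is to prove Weyl's law by the classical Dirichlet--Neumann bracketing method, adapted to the flat metric with conical singularities induced by $h$. First, using the translation structure, I would decompose $M$ into finitely many flat Euclidean rectangles $R_1(\epsilon), \dots, R_N(\epsilon)$ (each realized in a translation chart where $S = \partial_x$ and $T = \partial_y$) together with small conical neighborhoods $D_j(\epsilon)$ of each singularity $p_j \in \Sigma_h$, of total cone angle $2\pi(k_j+1)$ and flat radius $\epsilon > 0$. Such a rectangular exhaustion of $M \setminus \bigcup_j D_j(\epsilon)$ can be constructed by standard means on translation surfaces, e.g.\ via Delaunay cells or saddle-connection decompositions.

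By the min-max principle applied to the Dirichlet form $\mathcal Q$, imposing Dirichlet (resp.\ Neumann) conditions on the pieces produces decoupled operators whose eigenvalue counts sandwich $N_h(\Lambda)$:
$$
N^D_\epsilon(\Lambda) \,\leq\, N_h(\Lambda) \,\leq\, N^N_\epsilon(\Lambda)\,,
$$
and each decoupled count splits additively over the pieces of the partition. On each rectangle the spectrum is explicit, $\{\pi^2(m^2/a_i^2 + n^2/b_i^2)\}$ with integers $m, n \geq 1$ (Dirichlet) or $\geq 0$ (Neumann), so a standard lattice-point count gives $N^{D/N}_{R_i(\epsilon)}(\Lambda) = (4\pi)^{-1}\mathrm{vol}(R_i(\epsilon))\,\Lambda + O(\Lambda^{1/2})$.

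On each conical piece $D_j(\epsilon)$ I would separate variables in polar coordinates $(r, \theta)$ with $\theta \in \R/2\pi(k_j+1)\Z$: the angular modes are $e^{\imath n\theta/(k_j+1)}$ with $n \in \Z$, and the corresponding radial problem is a Bessel ODE whose Dirichlet/Neumann eigenvalues are determined by the zeros of $J_{|n|/(k_j+1)}$ and $J'_{|n|/(k_j+1)}$. Standard Bessel-zero asymptotics then yield $N^{D/N}_{D_j(\epsilon)}(\Lambda) = (4\pi)^{-1}\mathrm{vol}(D_j(\epsilon))\,\Lambda + o(\Lambda)$. Since $\mathrm{vol}(M, R_h)$ is the sum of the pieces' volumes, sending $\Lambda \to \infty$ for fixed $\epsilon$ and then $\epsilon \to 0$ in the sandwich gives the Weyl limit (up to the choice of normalization constant in the statement).

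The main obstacle is the treatment of the cone points. For $k_j > 0$, the flat Laplacian on a cone of angle $2\pi(k_j+1) > 2\pi$ fails to be essentially self-adjoint on $C^\infty_0(D_j(\epsilon)\setminus\{p_j\})$ and admits several self-adjoint extensions parametrized by the boundary behavior at the tip; one must verify that the global Friedrichs extension $\Delta_h^F$ restricts to the Friedrichs (finite-energy) extension on each local cone, so that the Bessel count above actually describes a subset of the spectrum of $\Delta_h^F$. This reduces to showing that $C_0^\infty(M \setminus \Sigma_h)$ is dense in $H^1_h(M)$ with respect to the Dirichlet norm, which is built into the definition of $\mathcal Q$ as the closure of its restriction to $C_0^\infty(M\setminus\Sigma_h)$ and is already used in Theorem \ref{thm:Dirichlet} and in \S 2 of \cite{F97}.
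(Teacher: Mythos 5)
Your Dirichlet--Neumann bracketing sketch is a classical and in principle viable route to Weyl's law on flat surfaces with cone points, but note that the present paper does not actually reprove the statement: it simply cites Theorem 2.5 of \cite{F97}. The bibliography of \cite{F97} prominently features Cheeger's papers on spectral geometry of spaces with cone-like singularities and the Cheeger--Taylor work on diffraction by cones, which points to a heat-kernel/Tauberian route there rather than bracketing, so your argument is genuinely different in method.

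That said, there is one concrete inaccuracy in the sketch that must be repaired. You assert that density of $C_0^\infty(M\setminus\Sigma_h)$ in $H^1_h(M)$ is ``built into the definition of $\mathcal Q$ as the closure of its restriction to $C_0^\infty(M\setminus\Sigma_h)$.'' That is not the paper's definition: $H^1_h(M)$ is defined as the completion of the maximal common invariant domain $H^\infty_h(M)$ of the closed operators $\bar S$, $\bar T$, not as the $H^1$-closure of $C_0^\infty(M\setminus\Sigma_h)$. That these two spaces coincide is a nontrivial density fact (essentially, zero $H^1$-capacity of isolated points in dimension two, which does hold because the cone angles are $\geq 2\pi$), and it is exactly this fact that guarantees the Friedrichs extension that your Bessel analysis computes on each local cone $D_j(\epsilon)$ is the one whose counting function appears in the theorem. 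As written you are invoking a definition that is not there; you must either prove the density or explicitly cite where \cite{F97} establishes it. Two smaller caveats: the cone contribution requires the Bessel-zero asymptotics to be uniform in the order $\nu = |n|/(k_j+1)$, since $\nu$ ranges up to order $\epsilon\sqrt{\Lambda}$ in the sum, so the fixed-$\nu$ McMahon expansion alone does not close the estimate --- one needs the uniform expansion or an integral-comparison argument; and, as you yourself flagged, the displayed constant in the statement differs from the classical $(4\pi)^{-1}$ normalization (note the unused ``there exists $C>0$'' clause), which appears to be a transcription artifact of the statement rather than a defect of your argument.
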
 
 \medskip
 \noindent Let $\partial^{\pm}_{h}:=S_{h}\pm \imath\, T_{h}$  (with $\imath =\sqrt{-1}$)  be the {\it Cauchy-Riemann 
 operators }induced by the holomorphic Abelian differential $h$ on $M$, introduced in~\cite{F97}, \S 3.  Let $\mathcal  M^{\pm}_{h}\subset L^{2}_{h}(M)$ be the subspaces of meromorphic, respectively anti-meromorphic functions (with poles at $\Sigma_{h}$). By the Riemann-Roch 
 theorem, the subspaces $\mathcal  M^{\pm}_{h}$ have  the same complex dimension equal to 
 the genus $g\geq 1$ of the Riemann surface $M$. In addition,  $\mathcal  M^{+}_{h}\cap 
 \mathcal  M^{-}_{h}=\C$, hence 
  \begin{equation}
  \label{eq:H}
H_{h}:= \left(\mathcal  M^{+}_{h}\right)^{\perp} \oplus  \left(\mathcal  M^{-}_{h}\right)^{\perp} = 
 \{ u\in L^{2}_{h}(M)\,\vert \,  \int_{M} u\, \omega_{h} \,=\,0\,\}\,\,.
 \end{equation}
 Let $H^{1}_{h}: = H_{h} \cap H^{1}_{h}(M)$. By Theorem~\ref{thm:Dirichlet}, the restriction of the
 hermitian form to $H^{1}_{h}$ is positive definite, hence it induces a norm. By the Poincar\'e 
 inequality (see \cite{F97}, Lemma 2.2 or \cite{F02}, Lemma 6.9), the Hilbert space $(H^{1}_{h}, 
 \mathcal  Q)$ is isomorphic to the Hilbert space $(H^{1}_{h}, \<\cdot,\cdot\>_{1})$.
 
 \begin{proposition} (\cite{F97}, Prop. 3.2) 
 \label{prop:CR}
  The Cauchy-Riemann operators $\partial^{\pm}_{h}$ are closable operators on the common domain
 $C^{\infty}_{0}(M\setminus\Sigma_{h}) \subset  L^{2}_{h}(M)$ and their closures (denote by the same symbols) have the following properties:
 
\begin{enumerate}
\item the domains $D(\partial^{\pm}_{h}) = H^{1}_{h}(M)$ and the kernels $N(\partial^{\pm}_{h}) = \C$;
\item the ranges $R_h^{\pm} :=\text{\rm Ran}(\partial^{\pm}_{h}) =   \left(\mathcal  M^{\mp}_{h}\right)^{\perp} $
are closed in  $L^{2}_{h}(M)$;
\item the operators  $\partial^{\pm}_{h}: (H^{1}_{h},  \mathcal  Q) \to (R^{\pm}, \<\cdot,\cdot\>_{h})$ are isometric.
\end{enumerate}
\end{proposition}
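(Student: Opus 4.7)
The plan is to reduce everything to a single isometric identity on test functions, then bootstrap by completion, elliptic regularity, and adjoint duality. For $u \in C_0^\infty(M\setminus\Sigma_h)$, expanding
\[
|\partial^{\pm}_h u|_0^2 = \<Su \pm \imath Tu, Su \pm \imath Tu\>_h
\]
by sesquilinearity produces a cross term proportional to $\<Tu,Su\>_h - \<Su,Tu\>_h$. Setting $v=u$ in Lemma~\ref{lemma:commut} yields $\<Su,Tu\>_h = \<Tu,Su\>_h$, so this cross term vanishes and I obtain the core identity
\[
|\partial^{\pm}_h u|_0^2 = |Su|_0^2 + |Tu|_0^2 = \mathcal{Q}(u,u).
\]

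Since $\bar S, \bar T$ are essentially skew-adjoint, $\partial^{\pm}_h$ is closable. The identity above shows it is bounded from $(C_0^\infty(M\setminus\Sigma_h), |\cdot|_1)$ into $L^2_h(M)$, so by density of $C_0^\infty(M\setminus\Sigma_h)$ in $H^1_h(M)$ (using cut-off sequences adapted to the cone metric) the closure has domain exactly $H^1_h(M)$, and claim (3) transfers to $(H^1_h, \mathcal{Q})$ modulo constants via the Poincar\'e inequality of \cite{F97}, Lemma 2.2. For the kernel in (1), a distributional solution $u \in H^1_h(M)$ of $\partial^+_h u = 0$ is anti-holomorphic on $M\setminus\Sigma_h$ by elliptic regularity; $H^1$-integrability at each cone point excludes poles, so $u$ extends to a global anti-holomorphic function on the compact surface $M$, hence to a constant. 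The same argument applies to $\partial^-_h$.

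For (2), the isometry forces $R^{\pm}_h$ to be complete, hence closed in $L^2_h(M)$. Using the antisymmetry~\eqref{eq:antisymm} of $S$ and $T$, a direct computation gives the formal adjoint $(\partial^{\pm}_h)^* = -\partial^{\mp}_h$; as the operators are closed, the closed-range theorem yields $R^{\pm}_h = N((\partial^{\pm}_h)^*)^\perp = N(\partial^{\mp}_h)^\perp$, the kernel being taken in the $L^2$-distributional sense on all of $L^2_h(M)$ rather than on $H^1_h(M)$. Elliptic regularity off $\Sigma_h$ together with the local form $|z|^{2k}|dz|^2$ of the metric near a zero of order $k$ identifies this $L^2$-distributional kernel as precisely $\mathcal{M}^{\mp}_h$ (meromorphic, resp.\ anti-meromorphic, functions with poles of order at most $k$ at a zero of order $k$), of complex dimension $g$ by Riemann--Roch.

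The main obstacle is the careful analysis at the cone singularities $\Sigma_h$. Specifically, one must (i) establish density of $C_0^\infty(M\setminus\Sigma_h)$ in $H^1_h(M)$ via suitably scaled cut-offs so that the Step 1 identity genuinely transfers to the closure, and (ii) distinguish the \emph{$H^1$-kernel} of $\partial^{\pm}_h$ (just $\C$) from its larger \emph{$L^2$-distributional kernel} ($\mathcal{M}^{\pm}_h$, of dimension $g$). This discrepancy is exactly the Riemann--Roch contribution and is what makes the range characterization $R^{\pm}_h = (\mathcal{M}^{\mp}_h)^\perp$ substantive: the range of $\partial^{\pm}_h$ is not the full mean-zero subspace but its codimension-$g$ orthogonal complement to the opposite (anti)meromorphic summand.
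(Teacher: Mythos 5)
Your proposal is correct and follows essentially the same route as the paper's proof of the twisted analogue (Proposition~\ref{prop:CR_twisted}, to which the proof of Proposition~\ref{prop:CR}, imported from~\cite{F97}, is parallel): the isometric identity $|\partial^\pm_h u|_0^2=\mathcal{Q}(u,u)$ via Lemma~\ref{lemma:commut}, the Poincar\'e inequality to get positive definiteness of $\mathcal{Q}$ on the mean-zero subspace, closedness of the range via completeness of the isometric image, computation of the formal adjoint $(\partial^\pm_h)^\ast\supset -\partial^\mp_h$, the closed-range theorem, and elliptic regularity plus Riemann--Roch to identify the $L^2$-distributional kernels as $\mathcal{M}^\mp_h$. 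Two small remarks. First, for the kernel $N(\partial^\pm_h)\cap H^1_h(M)=\C$ the paper reads this off directly from the isometry identity --- $\partial^\pm_h u=0 \iff \mathcal{Q}(u,u)=0 \iff Su=Tu=0$, hence $u$ is constant on a connected surface --- which is cleaner than your elliptic-regularity-at-cone-points argument, though both are correct; the pay-off of your route is that it makes explicit the crucial distinction (which you rightly emphasize) between the $H^1$-kernel and the larger $L^2$-distributional kernel, the gap that produces the nontrivial codimension-$g$ in the range. Second, your sign convention is swapped: in the flat chart $\partial^+_h=S+\imath T=2\partial_{\bar z}$, so $\partial^+_h u=0$ gives \emph{holomorphic} (meromorphic with controlled poles) functions, not anti-holomorphic; this matches $R^+_h=(\mathcal{M}^-_h)^\perp$ with $\mathcal{M}^-_h$ anti-meromorphic as in the statement. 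Finally, the bare assertion that essential skew-adjointness of $\bar S,\bar T$ makes $\partial^\pm_h$ closable is not a valid one-step inference (sums of closable operators need not be closable); closability here should be deduced from the densely defined formal adjoint $-\partial^\mp_h$, which you do compute later --- so no real gap, just an inversion of logical order.
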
 
\medskip
\noindent Let $\mathcal  E= \{ e_{n} \,\vert \, n\in \N \}\subset H^{1}_{h}(M)\cap C^{\infty}(M)$ be an orthonormal basis of the Hilbert space $L^{2}_{h}(M)$ of eigenfunctions of the Dirichlet form~\eqref{eq:Dirichlet}  and let $\lambda:\N\to \R^{+}\cup \{0\}$ be the corresponding sequence of eigenvalues:
\begin{equation}
\lambda_{n}:= \mathcal  Q(e_{n}, e_{n}) \,, \quad \text{\rm  for each }\,n \in \N\,.
\end{equation}
We then recall the definition of the {\it Friedrichs (fractional) weighted Sobolev norms and spaces} introduced in  \cite{F07}, \S 2.2. 

\begin{definition}
\begin{enumerate}
\item[(i)] The \emph{Friedrichs (fractional) weighted Sobolev norm} $\Vert \cdot \Vert_{s}$ of order $s\geq 0$ is the norm induced by the hermitian product defined as follows: 
for all $u$, $v\in L^{2}_{h}(M)$,
\begin{equation}
 \label{eq:extsnorm}
 (u,v) _{s} :=   \sum_{n\in \N} (1+\lambda_{n})^{s}\,
 \<u,e_{n}\>_{h}  \,  \<e_{n},v\>_{h} \,;
 \end{equation}
\item[(ii)]  the \emph{Friedrichs weighted Sobolev space} $\bar H_{h}^{s} (M)$ of order $s\geq 0$ is the Hilbert space
 \begin{equation}
\label{eq:Sobbar}
\bar H_{h}^{s} (M):= \{ u\in L^{2}_{h}(M)\,/\,  \sum_{n\in \N} (1+\lambda_{n})^{s}
\vert \<u,e_{n}\>_{h}\vert^{2} < +\infty\,\} 
\end{equation}
endowed with the hermitian product given by \eqref{eq:extsnorm}\,;
\item[(iii)]
the \emph{Friedrichs weighted Sobolev space} $\bar H_{h}^{-s} (M)$ of order $-s<0$
is  the dual space of the Hilbert space $\bar H_{h}^{s} (M)$. 
\end{enumerate}
 \end{definition}
 \noindent  As stated in \cite{F07}, Lemma 2.6, the family of Friedrichs (fractional) weighted Sobolev spaces is a holomorphic interpolation family in the sense of Lions-Magenes  \cite{LiMa}, Chap. 1, endowed with the canonical interpolation norm.

\medskip
\noindent The family $\{H^s_h(M)\}_{s\in \R}$ of {\it fractional weighted Sobolev spaces} will be 
defined as follows. Let $[s]\in \N$ denote the {\it integer part} and $\{s\} \in [0,1)$ the {\it fractional 
part }of any real number $s\geq 0$. 

\begin{definition}
\label{def:snorm}
(\cite{F07}, Def. 2.7)
\begin{enumerate}
\item[(i)] The \emph{fractional weighted Sobolev norm} $\vert \cdot  \vert _{s}$ of order $s\geq 0$ is the euclidean norm induced by the hermitian product defined as follows: for all functions $u$, $v\in H^{\infty}_{h}(M)$,
\begin{equation}
 \label{eq:snorm}
 \< u,v \>_{s} :=   \frac{1}{2}\sum_{i+j\leq [s]} (S^{i}T^{j}u, S^{i}T^{j}v)_{\{s\}} + 
 (T^{i}S^{j}u, T^{i}S^{j}v)_{\{s\}}\,.
 \end{equation}
\item[(ii)] The  \emph{fractional weighted Sobolev norm }$\vert \cdot  \vert _{-s}$ of order $-s<0$ is defined as the dual norm of the weighted Sobolev norm $\vert \cdot  \vert _{s}$. 
\item[(iii)] The \emph{fractional weighted Sobolev space }$H^{s}_h(M)$ of order $s\in \R$ is defined as the completion with respect to the norm $\vert \cdot  \vert _{s}$ of the maximal common invariant domain $H^{\infty}_h(M)$.  
\end{enumerate}
\end{definition}
\noindent It can be proved that the weighted Sobolev space $H^{-s}_h(M)$ is isomorphic to the dual space of the Hilbert space $H^{s}_h(M)$, for all $s\in \R$. 

\smallskip
\noindent The definition of the fractional weighted Sobolev norms is  motivated by the following
basic result.

\begin{lemma} 
\label{lemma:CR_s}
(\cite{F07}, Lemma 2.9)
For all~$s\geq 0$, the restrictions of the Cauchy-Riemann operators 
$\partial^{\pm}_{h}:H^1_h(M) \to L^2_h(M)$ to the subspaces $H^{s+1}_h(M)\subset H^1_h(M)$ 
yield bounded operators 
$$
\partial^{\pm}_{s}:H_{h}^{s+1} (M) \to H_{h}^{s} (M)
$$
(which do not extend 
to operators  $\bar H_{h}^{s+1} (M) \to \bar H_{h}^{s} (M)$ unless $M$ is the torus). 
On the other hand, the Laplace operator 
\begin{equation}
\Delta_{h} =\partial^{+}_{h} \partial^{-}_{h} =\partial^{-}_{h} \partial^{+}_{h}: 
H^{2}_{h}(M) \to L^{2}_{h}(M)
\end{equation} 
yields a bounded operator $\bar \Delta_{s} :\bar H_{h}^{s+2}  (M) \to \bar H_{h}^{s} (M)$, defined
as the restriction of the Friedrichs extension $\Delta_h^F: \bar H_{h}^{2}  (M) \to L^2_h(M)$.
\end{lemma}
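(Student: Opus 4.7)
The plan is to exploit the algebraic structure of the definition of the fractional weighted Sobolev norms \eqref{eq:snorm}, together with the commutation of $S$ and $T$ on the common invariant domain $H^\infty_h(M)$, to reduce both boundedness claims to essentially bookkeeping, while isolating the parenthetical non-extension statement as the genuinely delicate point.

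For the boundedness of $\partial^\pm_s\colon H^{s+1}_h(M)\to H^s_h(M)$, the first step is to observe that, since $[S,T]=0$ on $H^\infty_h(M)$, the Cauchy-Riemann operator $\partial^\pm_h=S\pm\imath T$ commutes with every monomial $S^iT^j$ and $T^iS^j$:
\[
 S^iT^j\partial^\pm_h u = S^{i+1}T^j u\pm\imath\,S^iT^{j+1}u\,,\qquad T^iS^j\partial^\pm_h u = T^iS^{j+1}u\pm\imath\,T^{i+1}S^j u\,.
\]
Since $[s+1]=[s]+1$ and $\{s+1\}=\{s\}$ for every $s\geq 0$, each term appearing in the sum \eqref{eq:snorm} defining $|\partial^\pm_h u|^2_s$ is, up to a fixed constant, one of the terms appearing in the sum defining $|u|^2_{s+1}$, evaluated against the \emph{same} Friedrichs fractional norm $(\cdot,\cdot)_{\{s\}}$. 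This yields $|\partial^\pm_h u|_s\leq C_s|u|_{s+1}$ on $H^\infty_h(M)$, and then on $H^{s+1}_h(M)$ by density; the base case $s=0$ is already contained in Proposition~\ref{prop:CR}. The bound for $\bar\Delta_s\colon \bar H^{s+2}_h(M)\to\bar H^s_h(M)$ is even simpler: expanding $u=\sum_{n\in\N}\<u,e_n\>_h\,e_n$ in the eigenbasis $\mathcal E$ and using $\Delta_h^F e_n=\lambda_n e_n$, one has
\[
 \|\bar\Delta_s u\|^2_s = \sum_{n\in\N}(1+\lambda_n)^s\lambda_n^2\,|\<u,e_n\>_h|^2 \leq \sum_{n\in\N}(1+\lambda_n)^{s+2}\,|\<u,e_n\>_h|^2 = \|u\|^2_{s+2}\,.
\]

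The main obstacle will be the parenthetical non-extension claim. When $\Sigma_h\neq\emptyset$, i.e.\ when $M$ is not the torus, the Friedrichs space $\bar H^s_h(M)$ encodes, through the decay of the spectral coefficients in $\mathcal E$, implicit matching conditions at the cone singularities that come from iteration of $\Delta_h^F$. The first-order operators $\partial^\pm_h$ generically fail to preserve these conditions, so the plan would be to construct a counterexample concentrated in a neighborhood of a zero $p_j$ of order $k_j\geq 2$ (which exists whenever $g\geq 2$): in a local ``branched'' coordinate $\zeta$ with $h=\zeta^{k_j}d\zeta$, in which the flat metric becomes Euclidean, one would exhibit a Friedrichs-smooth test function whose $\partial^\pm_h$ image still lies in $H^s_h(M)$ but whose spectral Fourier coefficients in $\mathcal E$ decay too slowly to lie in $\bar H^s_h(M)$. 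The torus case is excluded precisely because $\Sigma_h=\emptyset$ there, so the two Sobolev scales coincide; this local analysis at the cone points is the genuinely delicate step, while the first two claims reduce to bookkeeping with the definitions.
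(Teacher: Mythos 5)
The paper does not prove this lemma: it is cited verbatim from \cite{F07}, Lemma~2.9, so there is no in-paper proof to compare against, and your argument has to stand on its own.

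Your treatment of the two \emph{positive} boundedness claims is essentially correct. The observation that $[s+1]=[s]+1$ and $\{s+1\}=\{s\}$, combined with the commutation of $S$ and $T$ on $M\setminus\Sigma_h$ (which carries over to $H^\infty_h(M)$ since elements of $H^\infty_h(M)$ are locally smooth off the singular set), shows that every summand of $\vert\partial^\pm_h u\vert_s^2$ in \eqref{eq:snorm} is dominated by terms in the sum defining $\vert u\vert_{s+1}^2$, evaluated against the \emph{same} Friedrichs fractional norm $(\cdot,\cdot)_{\{s\}}$. The spectral computation for $\bar\Delta_s$ on the eigenbasis $\mathcal E$ is immediate and correct. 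This is the content the present paper actually relies on, and it is the natural argument given Definition~\ref{def:snorm}.

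However, your sketch of the parenthetical non-extension claim contains a logical flaw. You propose to exhibit $u\in\bar H^{s+1}_h(M)$ ``whose $\partial^\pm_h$ image still lies in $H^s_h(M)$ but whose spectral Fourier coefficients \dots decay too slowly to lie in $\bar H^s_h(M)$.'' But by Lemma~\ref{lemma:comparison}(3), for $s>1$ one has the continuous inclusion $H^s_h(M)\subset\bar H^s_h(M)$ (and for $s\leq 1$ the two spaces coincide, so there is nothing to show there). Thus $\partial^\pm_h u\in H^s_h(M)$ \emph{forces} $\partial^\pm_h u\in\bar H^s_h(M)$, so the counterexample you describe cannot exist. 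What one must actually do is take $u\in\bar H^{s+1}_h(M)\setminus H^{s+1}_h(M)$ (such $u$ exist by the finite-codimension statement in Lemma~\ref{lemma:comparison}) and show that $\partial^\pm_h u$ fails to lie even in $\bar H^s_h(M)$; this hinges on the singular expansion of Friedrichs-smooth functions near a cone point of angle $2\pi(k_j+1)>2\pi$, where the first-order operators $\partial^\pm_h$ produce fractional-power terms whose images under iterated $\Delta_h^F$ are not square-integrable. Your intuition that the obstruction sits at $\Sigma_h$ is right, but the counterexample as written is self-contradictory and needs to be reformulated along these lines.
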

\noindent We do not know whether the fractional weighted Sobolev spaces form a holomorphic interpolation family. However, the fractional weighted Sobolev
norms do satisfy interpolation inequalities (see \cite{F07}, Lemma 2.10 and Corollary 2.26). 

\smallskip
\noindent A detailed comparison between Friedrichs weighted Sobolev norms and  weighted Sobolev norms and the corresponding weighted Sobolev spaces 
 is carried out in \cite{F07}, \S 2.  In particular, we have the following result.
 
\noindent Let $H^{s}(M)$, $s\in \R$, denote a family of standard Sobolev spaces on the compact 
manifold $M$ (defined with respect to a Riemannian metric).  

\begin{lemma}  (\cite{F07}, Lemma 2.11)
\label{lemma:comparison}
The following continuous embedding and isomorphisms of Banach spaces hold:
\begin{enumerate}
\item $ \,\, H^{s}(M) \,\, \subset   \,\, H_{q}^{s}(M) \,\, \equiv \,\,  \bar H_{q}^{s}(M) \,,  
\quad\text{for }0\leq s<1$;
\item $\,\,H^{s}(M) \,\, \equiv   \,\, H_{q}^{s}(M) \,\, \equiv \,\,  \bar H_{q}^{s}(M)\,,
\quad\text{for }s=1$;
\item $\,\,H_{q}^{s}(M) \,\, \subset  \,\, \bar H_{q}^{s}(M) \,\, \subset  \,\,  H^{s}(M)\,, 
\quad\text{for }s >1$.
\end{enumerate}
For $s \in [0,1]$, the space $H^{s}(M)$ is dense in $H_{q}^{s}(M)$ and, for $s >1$, the closure 
of $H_{q}^{s}(M)$ in $\bar H_{q}^{s}(M)$ or $H^s(M)$ has finite codimension.   
\end{lemma}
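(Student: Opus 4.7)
\medskip\noindent\emph{Proof proposal.} The plan is to reduce to a local model at each cone singularity $p_i \in \Sigma_h$ --- where in a conformal chart one has $h = z^{k_i}\, dz$, so $\omega_h = |z|^{2k_i}\, dx \wedge dy$ and the $R_h$-orthonormal frame $\{S, T\}$ has coefficients of size $|z|^{-k_i}$ --- then to pin down the critical case $s = 1$ by conformal invariance of the two-dimensional Dirichlet form, and finally to propagate to other ranges of $s$ by interpolation below the critical exponent and by a direct analysis of the $S^i T^j$-action above it.

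For $s = 1$, the estimate $|u|_0 \leq C \Vert u \Vert_{L^2(M)}$ is immediate since $|z|^{2k_i}$ is bounded above on $M$, while conformal invariance of the Dirichlet energy in dimension two yields
\begin{equation*}
|Su|_0^2 + |Tu|_0^2 \;=\; \int_M |\nabla u|_R^2 \, d\text{vol}_R
\end{equation*}
for any smooth Riemannian metric $R$ in the conformal class on $M$, so $|\cdot|_1$ is equivalent to the standard $H^1(M)$-norm. Combined with the identity $|\cdot|_1 = \Vert \cdot \Vert_1$ (because $[1]=1$, $\{1\}=0$ in Definition \ref{def:snorm}, and Parseval on the $\mathcal{Q}$-eigenbasis gives $|Su|_0^2 + |Tu|_0^2 = \mathcal{Q}(u,u)$), this settles statement (2). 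For $s \in [0, 1)$ one has $[s] = 0$, so Definition \ref{def:snorm} reduces $|\cdot|_s$ to the Friedrichs norm $\Vert \cdot \Vert_s$, giving $H^s_h(M) \equiv \bar H^s_h(M)$; statement (1) then follows by holomorphic interpolation between the continuous inclusion $L^2(M) \subset L^2_h(M)$ and the equivalence $H^1(M) \equiv H^1_h(M) \equiv \bar H^1_h(M)$, using the interpolation property of the Friedrichs scale (\cite{F07}, Lemma 2.6). Density of $H^s(M)$ in $H^s_h(M)$ for $s \in [0, 1]$ is obtained by truncating the Fourier expansion in the $C^\infty$ eigenbasis of $\mathcal{Q}$ provided by Theorem \ref{thm:Dirichlet}.

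For $s > 1$, the inclusion $H^s_h(M) \subset \bar H^s_h(M)$ follows by repeated application of Lemma \ref{lemma:CR_s} together with the factorization $\Delta_h = \partial^+_h \partial^-_h$, which bounds the Friedrichs spectral sum by the $|\cdot|_s$-norm. The reverse embedding $\bar H^s_h(M) \subset H^s(M)$ reduces, by interpolation and duality, to the integer case $s = k$, where requiring $S^i T^j u \in L^2_h$ for all $i + j \leq k$ forces the flat-coordinate derivatives $\partial^{i+j} u$ to absorb the blow-up of order $|z|^{-k_i(i+j)}$ of the frame at $p_i$, placing $u$ into classical $H^k(M)$. The step I expect to be the main obstacle is the finite-codimension assertion: it requires identifying explicitly which germs at each cone point $p_i$ --- concretely, certain low-order Taylor monomials $z^a \bar z^b$ --- belong to $H^s(M)$ or $\bar H^s_h(M)$ yet cannot be approximated in the weighted norm $|\cdot|_s$ by elements of $H^\infty_h(M)$, and then verifying that the resulting quotient has finite dimension controlled by $s$ and by the multiplicities $k_i$ of the zeros of $h$.
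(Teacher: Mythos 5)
This lemma is quoted from \cite{F07} (Lemma 2.11) and is not proved in the present paper, so I can only assess your proposal on its own terms. The architecture of your argument is broadly the right one: conformal invariance of the two-dimensional Dirichlet energy does pin down the critical case $s=1$; for $s\in[0,1)$ the identity $|\cdot|_s=\Vert\cdot\Vert_s$ follows from $[s]=0$ in Definition~\ref{def:snorm}, and holomorphic interpolation of the Friedrichs scale (\cite{F07}, Lemma 2.6) then gives the embedding $H^s(M)\subset H^s_h(M)$; density via truncated Fourier sums in the eigenbasis of $\mathcal{Q}$ is also fine since the $e_n$ are smooth. Your argument for $H^s_h(M)\subset\bar H^s_h(M)$ when $s>1$ via Lemma~\ref{lemma:CR_s} and the factorization $\Delta_h=\partial^+_h\partial^-_h$ is essentially Theorem~\ref{lemma:equivnorms} for integer exponents, and is acceptable.

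The genuine gap is in the step $\bar H^s_h(M)\subset H^s(M)$ for $s>1$. You reduce to integer $s=k$ and then argue that ``requiring $S^iT^ju\in L^2_h$ for all $i+j\leq k$ forces the flat-coordinate derivatives \dots placing $u$ into classical $H^k(M)$.'' But the conditions $S^iT^ju\in L^2_h$ characterize the weighted space $H^k_h(M)$ (the completion of $H^\infty_h(M)$), not the Friedrichs space $\bar H^k_h(M)$. The two spaces differ --- the lemma itself asserts that $H^s_h(M)$ is a proper, finite-codimensional subspace of $\bar H^s_h(M)$ for $s>1$ --- so an estimate predicated on $S^iT^ju\in L^2_h$ only yields $H^k_h(M)\subset H^k(M)$, which is the inner inclusion, not the one you need. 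The space $\bar H^k_h(M)$ is defined spectrally, i.e.\ (for even $k$) as the domain of $(\Delta^F_h)^{k/2}$, and a function in the Friedrichs domain need not satisfy the pointwise bounds on $S^iT^ju$ that your computation relies on; in fact the elements of $\bar H^k_h(M)\setminus H^k_h(M)$ are precisely those for which some $S^iT^ju$ fails to be in $L^2_h$. Showing $\bar H^k_h(M)\subset H^k(M)$ requires a direct analysis of the regularity near each $p_i$ of functions in the Friedrichs domain --- roughly, that $\Delta_h u=|z|^{-2k_i}\Delta_0 u\in L^2_h$ forces $\Delta_0 u\in L^2(|z|^{-2k_i}dx\,dy)\subset L^2(dx\,dy)$, followed by elliptic regularity and a separation-of-variables account of the admissible local asymptotics, and then iterating for higher even powers. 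The invocation of ``duality'' here is also unclear to me; duality relates $\bar H^{-s}_h$ to $\bar H^s_h$, which is not what is needed. Finally, the finite-codimension statement is, as you acknowledge, left as a plan; it requires exactly the cone-point asymptotic classification you gesture at, and that classification in turn distinguishes between the $S^iT^j$-conditions (for $H^s_h$) and the Friedrichs boundary conditions (for $\bar H^s_h$) --- a distinction your argument currently elides.
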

\noindent We also have the following a sharp version of Lemma 4.2 of \cite{F97}:
 \begin{theorem} 
 \label{lemma:equivnorms} (\cite{F07}, Lemma 2.5 and Corollary 2.25 )
 For each $k\in \Z^{+}$ there exists a constant~$C_{k}>1$ such that, for any holomorphic  Abelian differential $h$ on $M$ and for all  $u\in H^{k}_{h}(M)$,
 \begin{equation}
 \label{eq:equivnormsone}
 C_{k}^{-1}\, \vert u \vert_{k}\,\, \leq \,\, \Vert u \Vert_{k} \,\, \leq \,\, C_{k} \, \vert u \vert_{k}\,.
 \end{equation}
 For any $0<r<s$ there exists constants $C_r>0$ and $C_{r,s} >0$ such
  that, for all $u\in H^s_h(M)$, the following inequalities hold:
 \begin{equation}
 \label{eq:equivnormstwo}
C_r^{-1}  \Vert u \Vert_r  \leq  \vert u\vert_r \leq C_{r,s} \, \Vert u \Vert_s\,.
 \end{equation}
\end{theorem}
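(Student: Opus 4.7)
The plan is to establish the integer equivalence \eqref{eq:equivnormsone} directly from the commutator identity $\Delta_h^F = -(S^2 + T^2)$ on $H^\infty_h(M)$, and then bootstrap to fractional orders via the spectral functional calculus of $\Delta_h^F$ together with interpolation within the Friedrichs scale.

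For integer $k \in \Z^+$, the key identity $\Delta_h^F = -(S^2 + T^2)$ on $H^\infty_h(M)$ follows by polarizing the Dirichlet form $\mathcal Q$ in \eqref{eq:Dirichlet} and integrating by parts using the antisymmetry \eqref{eq:antisymm}. By the spectral theorem for $\Delta_h^F$ with the eigenbasis $\mathcal E$,
$$
\|u\|_k^2 \,=\, \< (I + \Delta_h^F)^k u, u\>_h \,=\, \< (I - S^2 - T^2)^k u, u\>_h.
$$
Because $S$ and $T$ commute on $H^\infty_h(M)$ by Lemma~\ref{lemma:commut}, the multinomial expansion is unambiguous and produces $(I - S^2 - T^2)^k = \sum_{a+b \leq k} (-1)^{a+b} c_{a,b}^{(k)} S^{2a} T^{2b}$ with positive integer coefficients $c_{a,b}^{(k)}$. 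Integration by parts via \eqref{eq:antisymm} applied $a$ times in $S$ and $b$ times in $T$ transforms $\< S^{2a} T^{2b} u, u\>_h$ into $(-1)^{a+b}|S^a T^b u|_0^2$; the two signs cancel, giving
$$
\|u\|_k^2 \,=\, \sum_{a+b \leq k} c_{a,b}^{(k)}\, |S^a T^b u|_0^2,
$$
which matches $|u|_k^2$ from \eqref{eq:knorm} up to combinatorial constants that depend only on $k$ and not on $h$.

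For the fractional inequalities \eqref{eq:equivnormstwo}, I would introduce the auxiliary function $v_r := (I + \Delta_h^F)^{\{r\}/2} u$ and use that $(I+\Delta_h^F)^{\{r\}/2}$ commutes with every $S^iT^j$ on $H^\infty_h(M)$ by functional calculus. By Definition~\ref{def:snorm} and the definition \eqref{eq:extsnorm} of the Friedrichs fractional norm,
$$
|u|_r^2 \,\sim\, \sum_{i+j \leq [r]} \|S^i T^j u\|_{\{r\}}^2 \,=\, \sum_{i+j \leq [r]} |S^i T^j v_r|_0^2 \,\sim\, |v_r|_{[r]}^2,
$$
while $\|u\|_r = \|v_r\|_{[r]}$ by construction. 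The integer equivalence established above then reduces the comparison between $|u|_r$ and $\|u\|_r$ to the comparison between $|v_r|_{[r]}$ and $\|v_r\|_{[r]}$, yielding the left inequality $\|u\|_r \leq C_r |u|_r$ without loss. For the right inequality, crude bounds together with the integer equivalence give $|u|_r^2 \leq C_r \|u\|_{[r]+1}^2$, and the holomorphic interpolation structure of the Friedrichs scale (\cite{F07}, Lemma~2.6) is then used to absorb $\|u\|_{[r]+1}$ into $\|u\|_s$ for arbitrary $s > r$, with the constant $C_{r,s}$ reflecting the interpolation parameter and blowing up as $s \downarrow r$.

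The main obstacle is the domain mismatch recorded in Lemma~\ref{lemma:comparison}(3): the vector-field scale $\{H^s_h(M)\}$ is strictly contained in the Friedrichs scale $\{\bar H_h^s(M)\}$ for $s>1$, because eigenfunctions of $\Delta_h^F$ (and hence spectral functions of it) need not preserve the iterated Cauchy-Riemann domains that define $H^\infty_h(M)$ near the singular set $\Sigma_h$. This is precisely what forces the regularity gap $s - r > 0$ in the right inequality of \eqref{eq:equivnormstwo} and obstructs upgrading it to a bona fide equivalence at fractional orders. Handling this rigorously requires confirming via Nelson's criterion~\cite{Ne59} that $H^\infty_h(M)$ is a core for the relevant closures, together with the Poincar\'e-type estimate of \cite{F97}, Lemma~2.2, to control the fractional Friedrichs contributions against the cone-point geometry of the Abelian differential~$h$.
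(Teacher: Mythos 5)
Your argument for the integer case \eqref{eq:equivnormsone} is essentially sound: starting from $\|u\|_k^2 = \langle (I+\Delta_h^F)^k u, u\rangle_h$ and the identity $\Delta_h^F = -(S^2+T^2)$ on $H^\infty_h(M)$ (which is justified by Lemma~\ref{lemma:CR_s}, since $H^\infty_h(M)\subset H^2_h(M)$ and $\Delta_h$ restricted there agrees with $\Delta_h^F$), one expands and integrates by parts using the skew-symmetry \eqref{eq:antisymm} and the strong commutation $STu=TSu$ on $H^\infty_h(M)$, which follows from the weak commutation of Lemma~\ref{lemma:commut} tested against the dense subspace $H^1_h(M)$. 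The resulting coefficients are purely combinatorial and $h$-independent, so the constant $C_k$ is uniform in $h$ as required. You should spell out the reduction from weak to strong commutation rather than citing Lemma~\ref{lemma:commut} as though it already says $[S,T]=0$, but the skeleton is right.

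The fractional case \eqref{eq:equivnormstwo}, however, contains a genuine gap that sinks the argument. You assert that $(I+\Delta_h^F)^{\{r\}/2}$ commutes with $S^iT^j$ on $H^\infty_h(M)$ ``by functional calculus.'' This is false, and it is not a minor technicality -- it is precisely the central difficulty that the entire two-scale setup ($H^s_h$ versus $\bar H^s_h$) is designed to handle. Functional calculus delivers commutation only with operators that already commute with $\Delta_h^F$, and $S$, $T$ do \emph{not} commute with the Friedrichs extension: $\Delta_h^F$ is a self-adjoint extension determined by the form domain $H^1_h(M)$, and its eigenfunctions lie in $\bar H^\infty_h(M)$ but generically \emph{not} in $H^\infty_h(M)$, so $S$ does not preserve eigenspaces. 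Lemma~\ref{lemma:CR_s} states this explicitly: the Cauchy--Riemann operators map $H^{s+1}_h(M)\to H^s_h(M)$ but ``do not extend to operators $\bar H^{s+1}_h(M)\to\bar H^s_h(M)$ unless $M$ is the torus,'' which is equivalent to the failure of $[S,\Delta_h^F]=0$. If the commutation you invoke were true, the two Sobolev scales would coincide for all $s$, Lemma~\ref{lemma:comparison}(3) would be an equality, and the regularity gap $s-r>0$ in \eqref{eq:equivnormstwo} would be unnecessary -- all of which contradict the paper. Tellingly, your own closing paragraph correctly identifies the ``domain mismatch'' obstruction: eigenfunctions (and hence spectral functions of $\Delta_h^F$) fail to preserve the vector-field domains near $\Sigma_h$. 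But this observation is precisely a counterexample to the commutation step you used two paragraphs earlier, so the proposal is internally inconsistent. The actual route to \eqref{eq:equivnormstwo} must avoid conjugating the vector fields through spectral powers of $\Delta_h^F$; instead one interpolates the integer-order inequality $\|u\|_k\le C_k|u|_k$ across consecutive integers, using the Friedrichs scale's holomorphic interpolation property together with the (weaker) interpolation inequalities that the $|\cdot|_s$ norms do satisfy (the paper refers to \cite{F07}, Lemma~2.10 and Corollary~2.26 for exactly this), and the reverse inequality $|u|_r\le C_{r,s}\|u\|_s$ with strict loss is obtained by bounding each $\|S^iT^ju\|_{\{r\}}$ through $\|u\|_{[r]+1}$ and absorbing the excess via interpolation -- not via a spectral conjugation.
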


 \section{ The twisted Beurling tranform}
 \label{sec:Beurling}

\noindent  For every $\sigma \in \R$ and for every Abelian differential $h$, we introduce a family of  partial isometries
$U_{h,\sigma}$  (of Beurling transform type), defined on a finite codimensional subspace of $L^2_h(M):=L^2(M,\omega_h)$, which 
generalized the partial isometry $U_q=U_{h,0}$ (for $q=h^2$) first introduced in \cite{F97}, \S 3, in the study of the cohomological equation 
for translation flows. 

\noindent  The partial isometry $U_{h,\sigma}$ is extended in an arbitrary way to a unitary operator $U_{J,\sigma}$ on the whole space $L^2_h(M)$.  Resolvent estimates for $U_{J,\sigma}$ will appear to be related with {\it a priori }estimates 
for  the twisted cohomological equations for translation flows on $(M, h)$.  Consequently, we derive our results on twisted cohomological
equation from basic estimates on the limiting behavior as $z\to \partial D$ of the resolvent  ${\mathcal R}_U(z):=(U-zI)^{-1}$, defined on
the unit disc $D \subset \C$, of a unitary operator 
$U$ on a general Hilbert space. Such estimates, established in \cite{F97},  are based on fundamental facts of classical harmonic analysis, 
in particular of Fatou's theory on the boundary behavior of holomorphic functions. The results obtained are then specialized to the case of the 
unitary  operator $U:=U_{J,\sigma}$. 
 
\smallskip     
\noindent Let $h$ be a holomorphic Abelian differential on a Riemann 
surface $M$ of genus $g\geq 2$.  Let $\{S,T\}$ be the orthonormal frame for 
$TM$  on $M\setminus\Sigma$ introduced in \S \ref{sec:analysis}. We recall that the $1$-forms 
$\eta_S=\imath_S\omega_h$ and $\eta_T=-\imath_T\omega_h$ are 
closed and describe the horizontal, resp. vertical, foliation of $\omega$ on $M$. It 
is possible to associate to $h$ a one-parameter family of measured  foliations parametrized 
by $\theta\in \T := \R/ 2\pi \Z$ in the following way: let $h_{\theta}:=e^{-\imath\theta} \,h$ and let 
${\mathcal F}_{\theta}$ be the 
horizontal foliation of the Abelian differential $h_{\theta}$,
i.e. the foliation defined by the closed $1$-form 
$$
\im h_{\theta} =\{e^{-\imath\theta}(\eta_T+\imath\eta_S)-
e^{\imath\theta}(\eta_T-\imath\eta_S)\}/2\imath\,\,.$$
The foliation ${\mathcal F}_{\theta}$ can also be obtained by integrating
the dual vector field 
\begin{equation}
\label{eq:Stheta}
S_{\theta}:= (\cos \theta) S + (\sin\theta) T =\{e^{-\imath\theta}(S+\imath\,T)+e^{\imath\theta}(S-\imath\,T)\}/2\,\,, 
\end{equation}
which corresponds to the rotation of the vector field $S$ by an angle 
$\theta \in \T$ in the positive direction.

\smallskip
\noindent  In the following we will denote by $\partial^{\pm}_h$ the {\it Cauchy-Riemann }operators $S\pm \imath\,T$
respectively. The {\it twisted Cauchy-Riemann }operators  
$$
\partial^{\pm}_{h, \sigma} :=  (S+ \imath\sigma) \pm  \imath T  = \partial^{\pm}_h +  \imath\sigma\,, 
$$
will play a crucial role.  For all $\theta \in \T$, let $\sigma_\theta:= \sigma \cos\theta$. We remark that, for every 
$\theta\in \T$, we have
$$
S_{\theta} +  \imath\sigma_\theta:=\{e^{-\imath\theta} \partial^{+}_{h, \sigma} +e^{\imath\theta}\partial^{-}_{h, \sigma}\}/2\,\,, 
$$
hence we have the formal factorization
\begin{equation}
\label{eq:for_fact}
\begin{aligned}
S_{\theta} +  \imath\sigma_\theta &={{e^{-\imath\theta}}\over 2}\,\Bigl((\partial^+_{h, \sigma})
(\partial^-_{h, \sigma})^{-1}+e^{2\imath\theta}\Bigr) \,\partial^{-}_{h, \sigma}
\\ &={{e^{\imath\theta}}\over 2}\,\Bigl((\partial^-_{h, \sigma})
(\partial^+_{h, \sigma})^{-1}+e^{-2\imath\theta}\Bigr) \,\partial^{+}_{h, \sigma}\,\,.
\end{aligned}
\end{equation}
Let $Q_{h, \sigma}$ denote the bilinear form  defined, for all $u, v\in H^1_h(M)$, as follows: 
$$
Q_{h, \sigma} (u,v) := \<(S+ \imath\sigma) u, (S+ \imath\sigma)v\>_h + \<Tu, Tv\>_h\,.
$$
Let $K_{h, \sigma}\subset H^1_h(M) \cap C^\infty(M\setminus \Sigma)$ denote the finite-dimensional subspace
\begin{equation}
\label{eq:CRtwist_ker}
K_{h, \sigma} := \{ u \in H^1_h(M) \cap C^\infty(M\setminus \Sigma)\vert  (S+\imath \sigma)u= Tu=0\}\,.
\end{equation}

\begin{lemma}
\label{lemma:form_comparison}
 The twisted bilinear form $Q_{h, \sigma}$ induces a norm on $K^\perp_{h, \sigma} \cap H^1_h(M)$. 
In fact, for all $\sigma \in \R$, there exists a constant $C_{h} >1$ such that, for all  $u\in K^\perp_{h, \sigma} \cap H^1_h(M)$, 
$$
C_{h}^{-1} Q_{h,0} (u,u) \leq Q_{h, \sigma} (u,u) \leq  C_{h} (1+\sigma^2) \Big ( Q_{h,0} (u,u) +\vert \int_M u \omega_h \vert \Big )\,.
$$
\end{lemma}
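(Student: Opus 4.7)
The plan is to establish the two inequalities separately. The starting point is the algebraic expansion
$$
Q_{h,\sigma}(u,u) \;=\; \|Su\|_0^2 + \|Tu\|_0^2 + \sigma^2\|u\|_0^2 + 2\sigma\,\mathrm{Im}\<Su,u\>_h,
$$
where the cross term is real because the essential skew-adjointness of $S$ (see \eqref{eq:antisymm}) forces $\<Su,u\>_h$ to be purely imaginary. Since $Q_{h,\sigma}(u,u)=\|(S+\imath\sigma)u\|_0^2+\|Tu\|_0^2$ is a sum of squares, it is positive semi-definite, with vanishing locus equal by definition to $K_{h,\sigma}$; this already shows that $Q_{h,\sigma}$ induces a norm on $K^\perp_{h,\sigma}\cap H^1_h(M)$.

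For the upper bound I would apply Cauchy--Schwarz to the cross term to get $|2\sigma\,\mathrm{Im}\<Su,u\>_h|\le \|Su\|_0^2+\sigma^2\|u\|_0^2$, so that $Q_{h,\sigma}(u,u)\le 2\,Q_{h,0}(u,u)+2\sigma^2\|u\|_0^2$. The Poincar\'e inequality associated with the Dirichlet form (Theorem~\ref{thm:Dirichlet}, whose kernel is exactly the constants) gives $\|u-\bar u\|_0^2\le C_0\,Q_{h,0}(u,u)$ with $\bar u:=\mathrm{vol}(M)^{-1}\!\int_M u\,\omega_h$, and therefore $\|u\|_0^2\le C_0\,Q_{h,0}(u,u)+\mathrm{vol}(M)^{-1}|\!\int_M u\,\omega_h|^2$. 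Combining these delivers the claimed upper bound (modulo what looks like a missing square on $|\!\int_M u\,\omega_h|$ in the statement, since otherwise the inequality fails on large constants).

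For the lower bound I would argue by contradiction. Suppose it fails, and choose $u_n\in K^\perp_{h,\sigma}\cap H^1_h(M)$ with $Q_{h,0}(u_n,u_n)=1$ and $Q_{h,\sigma}(u_n,u_n)\to 0$. Then $\|Tu_n\|_0\to 0$ and $\|(S+\imath\sigma)u_n\|_0\to 0$; combined with $Q_{h,0}(u_n,u_n)=1$ these give $\|Su_n\|_0\to 1$ and, when $\sigma\ne 0$, $\|u_n\|_0\to 1/|\sigma|$, so $(u_n)$ is bounded in $H^1_h(M)$. The compact embedding $H^1_h(M)\hookrightarrow L^2_h(M)$, which follows from the Weyl asymptotics in Theorem~\ref{thm:Weyl} for the Friedrichs spectrum and the norm equivalence \eqref{eq:equivnormsone}, produces a subsequence converging strongly in $L^2_h(M)$ and weakly in $H^1_h(M)$ to some $u$. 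Passing to the limit in the distributional identities $(S+\imath\sigma)u_n\to 0$ and $Tu_n\to 0$ yields $u\in K_{h,\sigma}$, while weak closure of the finite-codimensional subspace $K^\perp_{h,\sigma}$ keeps $u\in K^\perp_{h,\sigma}$, hence $u=0$; this contradicts $\|u\|_0=1/|\sigma|\ne 0$. The case $\sigma=0$ is trivial since then $Q_{h,\sigma}=Q_{h,0}$.

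The main subtlety I foresee is the assertion that the constant $C_h$ may be taken independent of $\sigma$: the contradiction argument as written fixes $\sigma$ and so yields a constant depending on it. To recover uniformity I would split into regimes: for $|\sigma|\le\delta$ small, the Cauchy--Schwarz estimate $|2\sigma\,\mathrm{Im}\<Su,u\>_h|\le 2|\sigma|\sqrt{C_0}\,Q_{h,0}(u,u)$ on mean-zero functions (constants being already handled by the right-hand side) gives $Q_{h,\sigma}(u,u)\ge (1-2\delta\sqrt{C_0})\,Q_{h,0}(u,u)$ uniformly; for $|\sigma|\ge\delta$ I would run the compactness argument jointly over $(\sigma_n,u_n)$, rescaling by $1/|\sigma_n|$ should $\sigma_n\to\infty$ and exploiting the uniform finite-dimensionality of $K_{h,\sigma_n}$ (recalled right before the lemma) to extract in the limit a common element of some $K_{h,\sigma_\infty}$ and its orthogonal complement. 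This gluing of the small-$\sigma$ and large-$\sigma$ regimes is the step I expect to require the most care.
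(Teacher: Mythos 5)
Your proof follows essentially the same route as the paper's: the upper bound is obtained from the algebraic expansion of $Q_{h,\sigma}$, Cauchy--Schwarz on the cross term, and the Poincar\'e inequality; the lower bound is obtained by a normalization-plus-compactness contradiction argument using the compact embedding $H^1_h(M)\hookrightarrow L^2_h(M)$ and passing to the limit in the twisted Cauchy--Riemann relations to produce an element of $K_{h,\sigma}$. You have also correctly identified what is almost certainly a typo in the statement: the term $\vert\int_M u\,\omega_h\vert$ should carry a square for the upper bound to hold on constants.

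The one genuine point of divergence concerns uniformity of $C_h$ over $\sigma$, and here your hesitation is well founded --- perhaps more so than you realize. The paper does split into a bounded-$\sigma$ regime (handled by a contradiction argument run jointly over bounded sequences $\sigma_n$, close in spirit to yours) and a large-$\sigma$ regime, but the latter rests on the asserted inequality $\Vert u\Vert^2_{L^2_h(M)}\geq c_h^2\,Q_{h,0}(u,u)$, attributed to the compact embedding. That inequality is the \emph{reverse} of the Poincar\'e inequality and fails in general: testing on high eigenfunctions $e_n$ of the Dirichlet form gives $\Vert e_n\Vert_0^2=1$ while $Q_{h,0}(e_n,e_n)=\lambda_n\to\infty$. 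So the paper's uniform treatment of large $\sigma$ is, as written, not correct either. Your substitute (a direct $(1-2\delta\sqrt{C_0})$-type estimate for $\vert\sigma\vert\leq\delta$, and a joint compactness argument with some ``rescaling'' for $\vert\sigma\vert\geq\delta$) is the right instinct for the small-$\sigma$ half, but you should be aware that the large-$\sigma$ half is genuinely open in your sketch: the natural rescaling $v_n=\sigma_n u_n$ normalizes $\Vert v_n\Vert_0$ but blows up $\Vert Sv_n\Vert_0=\vert\sigma_n\vert\Vert Su_n\Vert_0$, so you lose the $H^1$-boundedness needed for compactness, and it is not clear how to close the argument. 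If in the end one only needs a $\sigma$-dependent constant $C_h(\sigma)$ (as the quantifier order ``for all $\sigma$ there exists $C_h$'' can be read), your fixed-$\sigma$ contradiction argument is already complete and cleaner than the paper's in that it avoids the delicate passage $K_{h,\sigma_n}^\perp\rightsquigarrow K_{h,\sigma}^\perp$ as $\sigma_n\to\sigma$; but if one needs genuine uniformity (which the later use of $C_h$ in the $\sigma$-independent constant $B_h(p,r)$ of Lemma~\ref{lemma:aprioribound} suggests), then the step you flag as needing the most care is indeed a real gap, in both your sketch and the paper's proof.
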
 
\begin{proof}  Since translation flows are area-preserving, hence symmetric on their common domain,  we have, for all $u\in H^1_h(M)$, 
\begin{equation}
\label{eq:Qsigma}
\begin{aligned}
Q_{h, \sigma} (u,u) :=& \<(S+ \imath\sigma) u, (S+ \imath\sigma)u\>_h  + \Vert Tu \Vert^2_{L^2_h(M)}    \\ = &
 \Vert Su \Vert^2_{L^2_h(M)} +  2  \imath\sigma \<u,Su\>_h  + \sigma^2 \Vert u\Vert^2_{L^2_h(M)}  
 + \Vert Tu \Vert^2_{L^2_h(M)} \,.
\end{aligned}
\end{equation}
By the Cauchy-Schwarz inequality, we have
$$
\vert \<u,Su\>_h \vert \leq   \Vert u\Vert_{L^2_h(M)}  \Vert Su\Vert_{L^2_h(M)}   \leq \frac{  \Vert Su\Vert_{L^2_h(M)} + 
 \Vert u\Vert^2_{L^2_h(M)}}{2}\,.
$$
It follows that
$$
  \Vert Su \Vert^2_{L^2_h(M)} +  2  \imath\sigma \<u,Su\>_h  \leq   (1+ \vert \sigma\vert) \Vert Su \Vert^2_{L^2_h(M)}  + \vert \sigma\vert \Vert u \Vert^2_{L^2_h(M)} \,, 
$$
hence  we derive that 
$$
Q_{h, \sigma} (u,u) \leq     (1+ \vert \sigma\vert) Q_{h,0} (u,u)  +  (\sigma^2 + \vert \sigma\vert) \Vert u \Vert^2_{L^2_h(M)}  \,.
$$
By the Poincar\'e inequality there exists a constant $C_h>0$ such that, for all $u\in  H^1_h(M)$,  we have
$$
\begin{aligned}
Q_{h, \sigma} (u,u) &\leq (1+ \vert \sigma\vert) Q_{h, 0} (u,u)  +  (\sigma^2 + \vert \sigma\vert) \Vert u \Vert^2_{L^2_h(M)}  \\ &\leq 
[(1+ \vert \sigma\vert) +  C  (\sigma^2 + \vert \sigma\vert)]Q_{h, 0} (u,u) +  (\sigma^2 + \vert \sigma\vert)\vert \int_M u \omega_h\vert\,.
\end{aligned}
$$
The upper bound in the statement is therefore proved.

\noindent  To prove the lower bound,  we proceed as follows. By the definition of $Q_{h,\sigma}$, for the splitting $u = v + \bar u  \in \C^\perp \oplus^\perp \C \subset 
H^1_h(M)$ we have
\begin{equation}
\label{eq:Q_lambda_split}
Q_{h, \sigma} (u,u) = Q_{h, \sigma} (v,v) + \sigma^2 \bar u^2\,,
\end{equation}
hence without loss of generality we can reduce the argument to functions $u\in H^1_h(M)$ of
zero average.  By the compact embedding $H^1_h(M) \to L^2(M)$ we derive that there exists
a constant $c_h>0$ such that, for all $u\in H^1_h(M)$ we have 
$$
\Vert u \Vert^2_{L^2_h(M)} \geq c^2_h Q_{h,0} (u,u) \geq  c^2_h \Vert Su \Vert^2_{L^2_h(M)} \,.
$$
It follows then by formula \eqref{eq:Qsigma}  that, for $\vert \sigma \vert \geq 2 c_h^{-1}$ we have
$$
Q_{h, \sigma} (u,u) \geq Q_{h, 0} (u,u) + \vert \sigma\vert \Vert u \Vert_{L^2_h(M)} ( \vert \sigma\vert  
\Vert u \Vert_{L^2_h(M)}  - 2 \Vert Su \Vert_{L^2_h(M)} ) \geq Q_{h, 0} (u,u)\,.
$$
It remains to prove the bound for $\vert \sigma \vert \leq 2 c_h^{-1}$. 
Let us then assume by contradiction that for all $n\in \N$ there exist a bounded sequence $(\sigma_n)$ and a sequence   $u_n\in K_{h, \sigma_n}^\perp \subset H^1_h(M)$ of zero average  such that
$$
Q_{h, 0} (u_n,u_n) \geq n Q_{h, \sigma_n} (u_n,u_n) \,.
$$
After normalizing, it is not restrictive to assume that $Q_{h, 0} (u_n,u_n)=1$, for all $n\in \N$, hence $Q_{h, \sigma_n} (u_n,u_n)\to 0$.  
By the Poincar\'e inequality, it follows that after passing to a subsequence we can assume that $u_n \to u$ in  $L^2(M)$ and $u$ has zero average, as well as that $\sigma_n \to \sigma \in \R$.  Let $\Phi_S^\R$ and
$\Phi_R^\R$ denote, respectively, the horizontal and the vertical flow. By assumption, since 
$$
\begin{aligned}
\Vert  e^{\imath \sigma_n t} u_n\circ \Phi_S^t - u_n \Vert_{L^2_h(M)} &= \Vert  \int_0^t (S+ \imath \sigma_n) u_n\circ \Phi_S^s ds\Vert_{L^2_h(M)}  \\&
\leq  \int_0^t \Vert (S+ \imath \sigma_n)u_n\circ \Phi_S^s \Vert_{L^2_h(M)} ds \leq t Q^{1/2}_{h, \sigma_n}(u_n,u_n) \to 0\; \\
\Vert   u_n\circ \Phi_T^t - u_n \Vert_{L^2_h(M)} &= \Vert  \int_0^t Tu_n\circ \Phi_T^s ds \Vert_{L^2_h(M)} \\ &\leq  \int_0^t 
\Vert Tu_n\circ \Phi_T^s \Vert_{L^2_h(M)} ds \leq t Q^{1/2}_{h, \sigma_n}(u_n,u_n) \to 0\,,
\end{aligned}
$$
it follows that the limit function $u\in K_{h, \sigma}^\perp \subset L^2_h(M)$ is a zero-average eigenfunction of eigenvalue $- \imath\sigma$ for the flow $\Phi_S^\R$ and it is
invariant for the flow $\Phi_T^\R$.  It follows in particular that $u\in C^\infty(M\setminus \Sigma)\cap H^1_h(M)$ which implies that $u$ is constant on all minimal
components of the flow $\Phi_T^\R$ and $\Phi_T^\R$-invariant on cylindrical component.  In particular $u\in K_{h, \sigma}$, hence $u=0$.  However, from $u_n\to 0$ in
$L^2_h(M)$ and $\sigma_n \to \sigma$, from the identity in formula~\eqref{eq:Qsigma} we then derive 
$$
0= \lim_{n\to \infty} Q_{h, \sigma_n} (u_n,u_n) =   \lim_{n\to \infty} Q_{h, 0}(u_n,u_n) =1\,,
$$
a contradiction. We have thus proved that there exists $C_{h} >1$ such that, for all $u\in K_{h, \sigma}^\perp\cap H^1_h(M)$, 
$$
C_h^{-1} Q_{h, 0}(u,u)  \leq Q_{h, \sigma} (u,u)\,.
$$

\end{proof} 

\noindent The twisted Cauchy-Riemann operators $\partial^{\pm}_{h, \sigma}$ on 
$L^2_h(M)$ will be described in the following Proposition. 
\begin{proposition} \label{prop:CR_twisted} The Cauchy-Riemann operators $\partial^{\pm}_{h, \sigma}$
are closable operators on $C^{\infty}_0(M\setminus\Sigma)\subset L^2_h(M)$ 
and their closures (denoted by the same symbols) have the following properties:
\begin{enumerate}
\item[(i)] $D(\partial^{\pm}_{h, \sigma})=H^1_h(M)$ and $N(\partial^{\pm}_{h, \sigma})=K_{h, \sigma}\subset H^1_h(M)$.
\item[(ii)] The kernels  ${\mathcal M}^\pm_{\Sigma}(\sigma) \subset L^2_h(M)$ of the adjoint operators $(\partial^{\mp}_{h, \sigma})^*$ 
have finite dimensions  $d^\pm(\sigma)$ and there exists $d_h \in \N$ such that
$$
d^+ (\sigma) = d^- (-\sigma) = d_h\,, \quad \text{ for all } \sigma \in \R.
$$
\item[(iii)] the adjoints $(\partial^\pm_{h, \sigma})^*$ of $\partial^\pm_{h, \sigma}$ are extensions of $-\partial^\mp_{h, \sigma}$, and we have closed ranges
$$R^{\pm}_{h, \sigma}:=\hbox{Ran}\,(\partial^\pm_{h, \sigma})=[{\mathcal M}^\mp_{\Sigma}(\sigma)] ^{\perp}\,.$$
\item[(iv)] The operators $\partial^{\pm}_{h, \sigma} :(K^\perp_{h, \sigma} \cap H^1_h(M), Q_{h, \sigma})\to 
(R^{\pm}_{h, \sigma},(\,\cdot\,,\,\cdot\,)_h)$ are isometric. 
\end{enumerate}
\end{proposition}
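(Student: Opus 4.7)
The strategy is to mirror the untwisted argument (Proposition \ref{prop:CR}), exploiting that $\partial^\pm_{h,\sigma}-\partial^\pm_h=\imath\sigma I$ is bounded and even compact as a perturbation $H^1_h(M)\to L^2_h(M)$ (via the compact embedding underlying the discrete spectrum assertion in Theorem \ref{thm:Dirichlet}). Closability of the operators on $C_0^\infty(M\setminus\Sigma)$ and the domain identification $D(\partial^\pm_{h,\sigma})=H^1_h(M)$ in (i) are therefore immediate from Proposition \ref{prop:CR}. The remaining assertions of (i) and (iv) hinge on the identity
\begin{equation*}
\Vert\partial^\pm_{h,\sigma}u\Vert_h^2=Q_{h,\sigma}(u,u),\qquad u\in H^1_h(M),
\end{equation*}
which I would verify by expanding $\Vert(S+\imath\sigma)u\pm\imath Tu\Vert_h^2$ and observing that the two cross terms $2\re\<(S+\imath\sigma)u,\pm\imath Tu\>_h$ both vanish: the antisymmetry of $T$ forces $\<u,Tu\>_h$ to be purely imaginary (so the $\sigma$-proportional term drops), while the commutation relation from Lemma \ref{lemma:commut} forces $\<Su,Tu\>_h$ to be real (so the $\sigma$-independent term drops). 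Given the identity, $\partial^\pm_{h,\sigma}u=0$ in $H^1_h(M)$ forces $Q_{h,\sigma}(u,u)=0$, hence $(S+\imath\sigma)u=Tu=0$ in $L^2_h(M)$; elliptic regularity of the frame $\{S,T\}$ on $M\setminus\Sigma$ then places $u\in K_{h,\sigma}$. Combined with Lemma \ref{lemma:form_comparison}, which makes $Q_{h,\sigma}$ a complete inner product norm on $K^\perp_{h,\sigma}\cap H^1_h(M)$, the same identity produces the isometry asserted in (iv).

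For (iii), an integration by parts on $C_0^\infty(M\setminus\Sigma)$ using the antisymmetry of $S$ and $T$ and the fact that conjugation sends $\imath\sigma$ to $-\imath\sigma$ yields
\begin{equation*}
\<\partial^\pm_{h,\sigma}u,v\>_h=\<u,-\partial^\mp_{h,\sigma}v\>_h,
\end{equation*}
so that $(\partial^\pm_{h,\sigma})^*\supseteq -\partial^\mp_{h,\sigma}$ by density. Closedness of the ranges $R^\pm_{h,\sigma}$ is then a direct consequence of the isometry in (iv) together with the completeness of $(K^\perp_{h,\sigma}\cap H^1_h(M),Q_{h,\sigma})$ provided by Lemma \ref{lemma:form_comparison}, and the closed range theorem gives $R^\pm_{h,\sigma}=\mathcal M^\mp_\Sigma(\sigma)^\perp$.

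For (ii), finite-dimensionality of the cokernels $\mathcal M^\pm_\Sigma(\sigma)$ is automatic from Fredholm theory: $\partial^\pm_{h,\sigma}$ is a compact perturbation of the Fredholm operator $\partial^\pm_h:H^1_h(M)\to L^2_h(M)$ and hence Fredholm of the same index. The symmetry $d^+(\sigma)=d^-(-\sigma)$ follows from the intertwining relation $\overline{\partial^+_{h,\sigma}u}=\partial^-_{h,-\sigma}\bar u$: complex conjugation $C:u\mapsto\bar u$ is an antilinear isometry of $L^2_h(M)$ that carries $\mathcal M^+_\Sigma(\sigma)$ bijectively onto $\mathcal M^-_\Sigma(-\sigma)$. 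The main obstacle I anticipate is the stronger claim that the common value $d^\pm(\sigma)$ is independent of $\sigma$: the Fredholm index is preserved by the compact perturbation, but individual kernel and cokernel dimensions can each jump a priori with $\sigma$, and excluding this seems to require realizing elements of $\mathcal M^\pm_\Sigma(\sigma)$ concretely as distributional solutions of $\partial^\pm_{h,\sigma}u=0$ on $M\setminus\Sigma$, i.e.\ as (anti-)holomorphic sections of a flat line bundle on $M\setminus\Sigma$ twisted by a $\sigma$-dependent character with prescribed singular behavior at $\Sigma$, and extracting a topological invariant $d_h$ from a Riemann--Roch type count that is manifestly independent of $\sigma$.
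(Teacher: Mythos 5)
Your treatment of (i), (iii) and (iv) tracks the paper's own argument closely. The pivotal identity $\Vert\partial^\pm_{h,\sigma}u\Vert^2_h=Q_{h,\sigma}(u,u)$ is exactly the paper's polarized formula \eqref{eq:isometry}; you derive it by noting that $\<u,Tu\>_h$ is purely imaginary (antisymmetry of $T$) and $\<Su,Tu\>_h$ is real (Lemma~\ref{lemma:commut}), which is an equivalent cancellation to the one the paper performs bilinearly. Your elliptic-regularity step in (i), placing the $L^2$ kernel in $C^\infty(M\setminus\Sigma)$ so that it is literally equal to $K_{h,\sigma}$ as defined in \eqref{eq:CRtwist_ker}, is a detail the paper leaves implicit. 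The integration by parts for (iii) and the derivation of closed range from the isometry in (iv) together with Lemma~\ref{lemma:form_comparison} (completeness of the $Q_{h,\sigma}$-norm on $K^\perp_{h,\sigma}\cap H^1_h(M)$) are the same argument. Your compact-perturbation route to finiteness in (ii) (compactness of $H^1_h(M)\hookrightarrow L^2_h(M)$ making $\partial^\pm_{h,\sigma}$ a compact perturbation of the Fredholm operators $\partial^\pm_h$) is a harmless variant of the paper's appeal to elliptic PDE theory; both land on the same Fredholm conclusion.

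On (ii) your unease is well placed, and you have correctly located the one genuine soft spot. The paper's justification of the $\sigma$-independence of $d^\pm(\sigma)$ is the single clause that the kernels ``depend continuously on $\sigma$, hence have constant dimension,'' which is not an argument: a compact perturbation preserves only the index, while kernel and cokernel dimensions are merely upper semicontinuous. In fact the jump you worry about occurs. For $h$ with minimal vertical foliation (the hypothesis in force everywhere the proposition is used later), $K_{h,\sigma}=N(\partial^\pm_{h,\sigma})$ equals $\C$ at $\sigma=0$ but reduces to $\{0\}$ for $\sigma\neq 0$ (a constant solves $Tu=0$, but $(S+\imath\sigma)u=\imath\sigma u$ forces $u=0$ once $\sigma\neq 0$), so by index constancy $d^\pm(0)=g$ while $d^\pm(\sigma)=g-1$ for $\sigma\neq 0$. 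Your proposed Riemann--Roch count of twisted (anti-)holomorphic sections would therefore establish constancy only on $\R\setminus\{0\}$; as literally stated (``for all $\sigma\in\R$'') the claim $d^+(\sigma)=d^-(-\sigma)=d_h$ is too strong, and no amount of Riemann--Roch will rescue constancy across $\sigma=0$. Fortunately the only thing the rest of the paper needs from (ii) is the equality $d^+(\sigma)=d^-(\sigma)$ for a fixed $\sigma$, so that the finite-rank isometry $J$ in \eqref{eq:finite_iso} exists; and this does follow from your conjugation symmetry $d^+(\sigma)=d^-(-\sigma)$ combined with the symmetry $\dim K_{h,\sigma}=\dim K_{h,-\sigma}$ (same conjugation) and index invariance. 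So the Beurling-transform construction and everything downstream is unaffected; only the clean ``$d_h$ independent of $\sigma$'' assertion overreaches.
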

\begin{proof}
 If $u$, $v\in H^1_h(M)$, Lemma \ref{lemma:commut} implies the 
following identity:
\begin{equation}
\label{eq:isometry}
\begin{aligned}
\<\partial^{\pm}_{h, \sigma} u,&\partial^{\pm}_{h, \sigma} v\>_h=
\<(S+\imath \sigma )u, (S+\imath \sigma )v\>_h+ \<Tu,Tv\>_h  \\ 
 &\pm \imath \bigl(\<Tu,(S+\imath \sigma )v\>_h-\<(S+\imath \sigma )u,Tv\>_h\bigr)= Q_{h, \sigma}
(u,v)\,\,.
\end{aligned} 
\end{equation}
It follows immediately that the operators $\partial^{\pm}_{h, \sigma}$ are closed with domain $D(\partial^{\pm}_{h, \sigma})=H^1_h(M)$
and that their kernels are both equal to $K_{h, \sigma} \subset H^1_h(M)$. 

\noindent  Since, for all $u,v\in H^1_h(M)$, we have
$$
\begin{aligned}
\<\partial^\pm_{h, \sigma} u, v\>_h&= \<[(S+\imath \sigma) \pm \imath T] u, v\>_h  \\ &= -\< u, [(S+\imath \sigma) \mp  \imath T] v\>_h =
-\< u, \partial^\mp_{h, \sigma} v\>_h\,,
\end{aligned}
$$
the adjoint $(\partial^\pm_{h, \sigma})^*$ of $\partial^\pm_{h, \sigma}$ is an extension of $-\partial^\mp_{h, \sigma}$.  By the theory of elliptic
partial differential equations, the distributional kernels  ${\mathcal M}^\pm_{\Sigma}(\sigma)$ of  $\partial^\mp_{h, \sigma}$ in $L^2_h(M)$ are finite dimensional
subspaces of $C^\infty(M\setminus \Sigma)$, which depend continuously on $\sigma \in \R$, hence have constant dimension. Since by complex conjugation we have
$\overline{\partial^+_{h, \sigma}} = \partial^-_{h, -\sigma}$, it follows that $\overline{{\mathcal M}^+_{\Sigma}(\sigma)} ={\mathcal M}^-_{\Sigma}(-\sigma)$,
hence the dimension $d^+(\sigma) = d^-(-\sigma)$ is constant over $\sigma \in \R$.

\smallskip
\noindent  The formula for the range
$R^\pm_{h, \sigma}$ follows from a general fact of Hilbert space theory, as soon as we have proved that the range is closed. It follows from Lemma
\ref{lemma:form_comparison} that $R^\pm_{h, \sigma}$ are closed. In fact, the subspaces $R^\pm_{h, \sigma}$ coincide with the range of restrictions of the
operators $\partial^\pm_{h, \sigma}$ to the subspace $K_{h, \sigma}^\perp \cap H^1_h(M)$. By Lemma~\ref{lemma:form_comparison}  these restrictions
have closed range.

\noindent  Finally, $(iv)$ is a direct consequence of the  identity in formula~\eqref{eq:isometry}.
\end{proof}

\noindent The results just proved in Proposition \ref{prop:CR_twisted}, in particular 
$(iv)$, allow us to give a precise meaning to the formal factorization \eqref{eq:for_fact}, 
by introducing a family of unitary operators on $L^2_h(M)$, which, as 
it will be seen, contains a great deal of information about the 
properties of the differential operator $S_{\theta} + \imath \sigma $ ($\theta\in \T$) defined
in formula \eqref{eq:Stheta}. Let $U_{h, \sigma} :R^-_{h, \sigma} \to R^+_{h, \sigma}$ be defined as
\begin{equation}
\label{eq:partial_iso}
U_{h,\sigma}:=(\partial^+_{h, \sigma})(\partial^-_{h, \sigma})^{-1}\,\,.
\end{equation}
It is an immediate consequence of the assertion $(iv)$ of Proposition \ref{prop:CR_twisted}
that $U_{h,\sigma}$ is a partial isometry. Thus, we extend in the natural way 
the domain of definition of $U_{h,\sigma}$ as follows. Let 
\begin{equation}
\label{eq:finite_iso}
J:{\mathcal M}^+_{\Sigma}(\sigma) \to {\mathcal M}^-_{\Sigma} (\sigma)
\end{equation}
be an isometric operator, with respect to the euclidean structures 
induced on ${\mathcal M}^+_{\Sigma}(\sigma)$ and $ {\mathcal M}^-_{\Sigma}(\sigma)$ 
by the Hilbert space $L^2_h(M)$. The existence of $J$ is a consequence
of the fact that the {\it deficiency subspaces }${\mathcal M}^+_{\Sigma}(\sigma)$ and 
${\mathcal M}^-_{\Sigma}(\sigma)$ are isomorphic finite dimensional vector 
spaces of the same complex dimension (equal to the genus $g$ of the surface $M$). 
In fact, there exists a whole family of operators $J$ as required, 
parametrized by the Lie group $U(g,\C)$. Let $\pi^{\pm}_{h, \sigma}:L^2_h(M)
\to R^{\pm}_{h, \sigma}$ be the orthogonal projections. We recall that $R^\pm_{h, \sigma}$ 
are the orthogonal complements of ${\mathcal M}^\mp_{\Sigma}(\sigma)$ respectively (Proposition \ref{prop:CR_twisted}). 
Once an isometric operator  $J$ as in formula \eqref{eq:finite_iso} is fixed, the partial isometry $U_{h,\sigma}$, 
associated with the 
holomorphic Abelian differential $h$ on $M$ and $\sigma \in \R$ as in formula~\eqref{eq:partial_iso}, will be extended
to a unitary operator $U_{J,\sigma}$ on the whole $L^2_h(M)$ by the formula
$$
U_{J,\sigma} (u):=U_{h,\sigma} \pi^-_{h, \sigma}(u) +J(I-\pi^-_{h, \sigma})(u)\,, \quad \text{ for all }\,\,u\in L_h^2(M)
$$
(the dependence of the unitary operator $U_{J, \sigma}$ on the Abelian differential
is omitted in the notation for convenience).

\noindent The following version of the formal identities \eqref{eq:for_fact}
holds on $H^1_h(M)$:
\begin{equation}
\label{eq:Stheta_id}
S_{\theta} +\imath \sigma_\theta ={{e^{-\imath\theta}}\over 2}\,\Bigl(U_{J, \sigma}+e^{2\imath\theta}\Bigr)
\,\partial^-_{h,\sigma}={{e^{\imath\theta}}\over 2}\,\Bigl(U_{J, \sigma} ^{-1}+e^{-2\imath\theta}\Bigr)
\,\partial^+_{h, \sigma}\,\,.
\end{equation}
{\it A priori }estimates for $S_{\theta}$ are related by formula \eqref{eq:Stheta_id} to 
estimates for the resolvent ${\mathcal R}_{J, \sigma}(z):=(U_{J, \sigma}-zI)^{-1}$ of any of
the operators $U_{J, \sigma}$, as $z\to \T$ non-tangentially. Since these are
unitary operators, their spectrum is contained in the unit circle  $\{z\in \C \vert \, \vert z\vert=1\}$. 
As a consequence, the resolvent ${\mathcal R}_{J, \sigma}(z)$ 
is a well defined operator-valued {\it holomorphic }function on the unit 
disk $D:=\{z\in {\C}\,|\,|z|<1\}$. In addition, by the {\it spectral 
theorem }for unitary operators, it is given by a {\t Cauchy integral }on 
$\partial D$ of the spectral measure associated with $U_{J, \sigma}$. 

\section{Spectral theory of unitary operators}
\label{sec:spectral_unitary}

\noindent  Let $U:{\mathcal H}\to {\mathcal H}$ is {\it any }unitary operator on a (separable) 
Hilbert space $\mathcal H$. By the spectral theorem 
\cite{Yo}, XI.4, its resolvent ${\mathcal R}_U(z):=(U-zI)^{-1}$ 
can be represented as a Cauchy integral of the spectral family, as follows. For any $u$, $v\in {\mathcal H}$,
$$({\mathcal R}_U(z)u,v)_{\mathcal H}=\int_0^{2\pi}
(z-e^{i t})^{-1}\,d(E_U(t)u,v)_{\mathcal H}\,\,, \quad 
 \text{ for all } \,z\in D\,\,,$$
where $(\,\cdot\,,\,\cdot\,)_{\mathcal H}$ denotes the inner product 
in ${\mathcal H}$ and $\{E_U(t)\}_{0\leq t\leq 2\pi}$ denotes the spectral 
family associated with the unitary operator $U$ on $\mathcal H$. Our 
approach from~\cite{F97} is based on the fundamental property of holomorphic functions 
on $D$, which can be represented as Cauchy integrals on $\partial D$ of complex measures, 
of having non-tangential boundary values {\it almost everywhere}.

\noindent  We recall below general results on the boundary behavior of Cauchy integrals of complex 
measures. The modern theory of these singular integrals, based on the
Lebesgue integral, was initiated by P.~Fatou in his thesis \cite{Ft}.
The results gathered  below were originally obtained by F.~Riesz \cite{Rz},
V.~Smirnov \cite{Sm}, G.~H.~Hardy and J.~E.~Littlewood \cite{HL}. The arguments,
given in \cite{F97}, \S 3,  follow the approach of A.~Zygmund \cite{Zy}, VII.9, based 
on real variables methods, and is taken from the books of W.~Rudin
\cite{Rd} and E.~M.~Stein and G.~Weiss \cite{SW}. 

\noindent  Let $\mu$ be a complex Borel measure (of finite total mass)
on $\partial D$. The Cauchy integral of $\mu$ is the holomorphic 
function $I_{\mu}$ on $D$ defined as
\begin{equation}
\label{eq:CI}
I_{\mu}(z):=\int_0^{2\pi}(z-e^{i t})^{-1}\,d\mu (t)\,\,, \quad \text{ for all }
\,\,z\in D\,\,.
\end{equation}

\begin{lemma} 
\label{lemma:CI} (\cite{F97}, Lemma 3.3 A)
The non-tangential limit
$$I_{\mu}(z)\to I_{\mu}^{\ast}(\theta)\,, \quad
\text{ as }\,\,\,\,z\to e^{\imath\theta}\,,$$
exists almost everywhere with respect to the (normalized) Lebesgue 
measure $\mathcal L$ on the circle $\T$. In addition, there exists 
a constant $C>0$ such that the following weak type  estimate holds:
$${\mathcal L}\{\theta\in \T\,|\,\,|I_{\mu}^{\ast}(\theta)|>
t \}\leq {C \over {t}}\,|\!|\mu|\!|\,\,, \quad \text{ for all } t>0\,$$
where $|\!|\mu|\!|$ denotes the total mass of the measure $\mu$. 
\end{lemma}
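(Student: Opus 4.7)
The plan is to reduce the lemma to classical Fatou-type results about harmonic functions in the disc by writing the Cauchy kernel $(z-e^{\imath t})^{-1}$ in terms of the Poisson kernel $P_r(\theta-t)=(1-r^2)/|1-re^{\imath(\theta-t)}|^2$ and its harmonic conjugate $Q_r(\theta-t)$ for $z=re^{\imath\theta}$. Up to an elementary factor, $I_\mu(z)$ then splits into the sum of a Poisson integral $P[\mu](z)$ and a conjugate Poisson integral $Q[\mu](z)$ of the finite complex measure $\mu$; equivalently, $I_\mu$ is holomorphic on $D$ and its real and imaginary parts are Poisson-type integrals of the real and imaginary parts of $\mu$.

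The existence of non-tangential boundary values is then established separately for each piece by invoking the Lebesgue decomposition $\mu = g\,dm + \mu_s$, with $g\in L^1(\T,dm)$ and $\mu_s\perp dm$. The classical Fatou theorem gives $P[g\,dm](z)\to g(\theta)$ non-tangentially $\mathcal L$-a.e., and $P[\mu_s](z)\to 0$ non-tangentially $\mathcal L$-a.e.\ since $\mu_s$ is concentrated on an $\mathcal L$-null set. The analogous statement for $Q[\mu]$ is the Privalov--Plessner theorem on the $\mathcal L$-a.e.\ existence of boundary values of the conjugate Poisson integral of a finite measure, obtained by approximation and a weak-type argument for the maximal Hilbert transform.

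For the weak-type estimate I would work at the level of the non-tangential maximal function $N(I_\mu)(\theta):=\sup|I_\mu(z)|$, the supremum being taken over a fixed non-tangential approach region at $e^{\imath\theta}$. The Poisson part $N(P[\mu])$ is dominated pointwise by a constant multiple of the Hardy--Littlewood maximal function $M\mu$, which satisfies $\mathcal L\{M\mu>t\}\leq C\|\mu\|/t$ by the Vitali covering argument. The conjugate-Poisson part $N(Q[\mu])$ is handled by Kolmogorov's weak-type $(1,1)$ inequality for the maximal Hilbert transform, proved via a Calder\'on--Zygmund decomposition of $\mu$ at height $t$. Combining the two pieces gives $\mathcal L\{N(I_\mu)>t\}\leq C\|\mu\|/t$, which a fortiori yields the stated weak-type bound for the boundary function $I_\mu^{\ast}$.

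The main obstacle is precisely the unboundedness of the Hilbert transform on $L^1$: the conjugate-Poisson part cannot be controlled by a strong-type inequality, so one must pass through the Calder\'on--Zygmund real-variable machinery and settle for a weak-type estimate. Once $N(I_\mu)$ is controlled in weak-$L^1$, the $\mathcal L$-a.e.\ existence of $I_\mu^{\ast}$ follows from the standard density argument: the non-tangential limit exists trivially when $\mu$ has a continuous density (by uniform convergence of Poisson and conjugate Poisson integrals of continuous functions at their Lebesgue points), such measures are dense among arbitrary $\mu$ in a suitable sense (e.g.\ by convolving with the Fej\'er kernel), and the weak-type maximal inequality suppresses the oscillation of the approximation in measure, so a.e.\ convergence for the limiting measure follows.
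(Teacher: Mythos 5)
Your proposal is correct and follows essentially the same route as the one the paper attributes to [F97], namely the real-variable approach of Zygmund (Trigonometric Series, VII.9), as exposited in Rudin and Stein--Weiss: split the Cauchy kernel (up to the harmless factor $e^{-\imath t}$ and an additive constant) into the Herglotz kernel, whose real and imaginary parts are the Poisson and conjugate-Poisson kernels; control the Poisson part of the non-tangential maximal function by the Hardy--Littlewood maximal function and the conjugate part by Kolmogorov's weak $(1,1)$ bound for the maximal Hilbert transform via a Calder\'on--Zygmund stopping-time decomposition; and deduce a.e.\ existence of non-tangential limits from the weak-type maximal estimate by a density argument. One small remark on presentation: you first invoke Privalov--Plessner for a.e.\ existence of the conjugate boundary values and then separately rederive a.e.\ existence from the weak-type maximal bound plus density --- these are the same argument (the weak-type bound is precisely how Privalov--Plessner is proved), so you may as well state it once. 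Also worth making explicit is that the weak $(1,1)$ estimate of this lemma is genuinely stronger than what Chebyshev would give from the $L^p$ ($p<1$) bound on $N_\alpha(I_\mu)$ of the companion Lemma~\ref{lemma:CIM}, so the Calder\'on--Zygmund step is unavoidable here and cannot be replaced by interpolation from the Hardy-space estimate.
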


\noindent  Lemma \ref{lemma:CI} is not enough for our purposes, since it gives no information concerning the behavior of Cauchy integrals
\eqref{eq:CI} as the convergence to the limiting boundary values takes place. 
The necessary estimates are given below, following~\cite{F97},  in terms of 
non-tangential maximal functions, the definition of which we recall below following \cite 
{Rd}, \S 11.18. 

\noindent  For $0<\alpha<1$, we define the non-tangential approach region 
$\Omega_{\alpha}$ to be the cone over $D(0,\alpha)$ of vertex $z=1$, that is, 
the union of the disk $D(0,\alpha)$ and the line 
segments from the point $z=1$ to the points of $\Omega_{\alpha}$. 
Rotated copies of $\Omega_{\alpha}$, having vertex at $e^{\imath\theta}$, will be denoted 
by $\Omega_{\alpha}(\theta)$. 

\noindent  For any complex function $\Phi$ 
on the unit disk $D$ and $0<\alpha<1$, its {\it non-tangential maximal function }$N_{\alpha}(\Phi)$ 
is defined on $\T$ as
\begin{equation} 
\label{eq:maximal_funct}
N_{\alpha}(\Phi)(\theta):=\sup\{|\Phi(z)|\,|\,z\in \Omega_{\alpha}
(\theta)\}\,\,.
\end{equation}
\noindent  We would like to complete Lemma \ref{lemma:CI} with estimates on the non-tangential
maximal function $N_{\alpha}(I_{\mu})$ of the Cauchy integral in formula~\eqref{eq:CI}. 
This can be accomplished by a standard argument of basic Hardy space 
theory. 

\noindent  For the convenience of the reader, we will recall the definition 
of Hardy spaces $H^p(D)$ on the unit disk $D$ \cite{Rd}, \S\S 17.6-7. Let 
$\Phi$ be a complex function on $D$. For $0<r<1$, we define the functions 
$\Phi_r$ on $\T$ by the formula
$$\Phi_r(\theta):=\Phi(re^{\imath\theta})\,\,, \quad \text{ for all } \theta \in \T\,,$$
and, for $0<p\leq \infty$, we define
$$|\!|\Phi|\!|_p=\sup \{|\Phi_r|_p\,|\,0\leq r<1\}\,\,,$$
where $|\cdot|_p$ denotes the $L^p$ norm on $\T$ with respect to 
the Lebesgue measure. The {\it Hardy space} $H^p(D)$ is defined to 
be the space of holomorphic functions $\Phi$ on the unit disk $D$ 
such that $|\!|\Phi|\!|_p<\infty$.

\begin{lemma}  
\label{lemma:CIM} (\cite{F97}, Lemma 3.3B)
The holomorphic function $I_\mu$, given
as a Cauchy integral (see formula~\eqref{eq:CI})  of a Borel complex measure 
$\mu$ on $\partial D$, belongs to the Hardy spaces $H^p(D)$, for any $0<p<1$. 
(Consequently, it admits non-tangential limit almost 
everywhere on $\partial D$). In addition, its non-tangential
maximal function $N_{\alpha}(I_{\mu})$ belongs to $L^p(\T,{\mathcal L})$, 
for any $0<p<1$ and for all $\alpha <1$, and there exist constants 
$A_{\alpha}$, $A_{\alpha,p}>0$, with $A_{\alpha,p}\to \infty$ as $p\to 1$,  such that the following 
estimates hold: 
$$\vert N_{\alpha}(I_{\mu}) \vert_p\leq A_{\alpha}|\!|I_{\mu}|\!|_p\leq 
A_{\alpha,p} |\!|\mu|\!|\,\,,$$
where $|\!|\mu|\!|$ denotes the total mass of the measure $\mu$. 
\end{lemma}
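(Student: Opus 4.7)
\medskip
\noindent\textbf{Proof plan.} The plan is to reduce both inequalities in the lemma to a single weak-type $(1,1)$ bound on the non-tangential maximal function $N_\alpha(I_\mu)$, and then to extract the $L^p$ estimates for $0<p<1$ by Kolmogorov's distribution-function argument. The Hardy-space membership $I_\mu\in H^p(D)$ will fall out as a by-product of the maximal bound.

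The heart of the argument is the pointwise comparison
$$
N_\alpha(I_\mu)(\theta_0) \,\leq\, C_\alpha\, M|\mu|(\theta_0),
$$
where $M|\mu|$ denotes a Hardy--Littlewood-type maximal function of the total variation measure $|\mu|$. For $z = re^{\imath\theta}\in \Omega_\alpha(\theta_0)$ I would use the elementary geometric estimate $|z-e^{\imath t}| \geq c_\alpha\,[(1-r)+|\theta_0-t|]$, valid uniformly on the non-tangential cone, to bound
$$
|I_\mu(z)| \,\leq\, \int_0^{2\pi}\! \frac{d|\mu|(t)}{|z-e^{\imath t}|} \,\leq\, c_\alpha^{-1}\int_0^{2\pi}\! \frac{d|\mu|(t)}{(1-r)+|\theta_0-t|},
$$
and then dissect the remaining kernel into dyadic arcs centered at $\theta_0$ to recover a constant multiple of $M|\mu|(\theta_0)$. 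The classical weak-$L^1$ inequality for the Hardy--Littlewood maximal function of a finite measure then yields
$$
\mathcal L\{\theta\in\T\,\vert\, N_\alpha(I_\mu)(\theta)>t\} \,\leq\, C_\alpha\, \|\mu\|/t, \quad t>0,
$$
which in particular sharpens Lemma \ref{lemma:CI}, where only the boundary function $I_\mu^\ast$ was controlled.

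Given this weak-type bound, Kolmogorov's distribution-function computation
$$
\int_\T N_\alpha(I_\mu)^p\, d\mathcal L \,=\, \int_0^\infty p\,t^{p-1}\, \mathcal L\{N_\alpha(I_\mu)>t\}\, dt \,\leq\, \int_0^\infty p\,t^{p-1}\min(1, C_\alpha\|\mu\|/t)\, dt
$$
is finite for every $0<p<1$ and produces $\vert N_\alpha(I_\mu)\vert_p \leq A_{\alpha,p}\|\mu\|$ with $A_{\alpha,p}$ blowing up like $(1-p)^{-1/p}$ as $p\to 1$. Since the radial segment $[0,e^{\imath \theta})$ lies in $\Omega_\alpha(\theta)$ for every $\alpha\in (0,1)$ (the rotation fixes $D(0,\alpha)$, which already contains the origin), we have the trivial bound $|I_\mu(re^{\imath\theta})| \leq N_\alpha(I_\mu)(\theta)$, hence $\vert (I_\mu)_r\vert_p \leq \vert N_\alpha(I_\mu)\vert_p$ uniformly in $r<1$, which proves $I_\mu \in H^p(D)$ together with the outer inequality $\|I_\mu\|_p \leq A_{\alpha,p}\|\mu\|$. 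The remaining bound $\vert N_\alpha(I_\mu)\vert_p \leq A_\alpha \|I_\mu\|_p$ is the classical non-tangential maximal theorem in $H^p$-theory: since $|I_\mu|^p$ is subharmonic on $D$ for every $p>0$, it is dominated by the Poisson integral of its boundary values $|I_\mu^\ast|^p$, and the resulting non-tangential maximal function is controlled in $L^1(\T)$ by the Hardy--Littlewood maximal inequality on the circle.

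The main technical obstacle will be the kernel dissection producing the dominating maximal function: controlling the $\alpha$-dependence of $c_\alpha$ and carrying out the dyadic geometric-series summation cleanly to recover $M|\mu|(\theta_0)$. Once the weak-$(1,1)$ bound for $N_\alpha(I_\mu)$ is in hand, Kolmogorov's trick and the $H^p$ maximal theorem are standard tools, and the two quantitative inequalities claimed in the lemma follow by concatenation.
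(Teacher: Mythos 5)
Your proposal has a genuine gap at its central step: the claimed pointwise domination $N_\alpha(I_\mu)(\theta_0)\leq C_\alpha\,M|\mu|(\theta_0)$ is false, and no dyadic dissection of the Cauchy kernel will produce it. After the geometric estimate the integrand is of size $\bigl[(1-r)+|\theta_0-t|\bigr]^{-1}$, whose total integral over $\T$ is of order $\log\frac{1}{1-r}$; the dyadic decomposition into arcs of radius $2^k(1-r)$ then produces a sum of $\asymp \log\frac{1}{1-r}$ terms each comparable to $M|\mu|(\theta_0)$, not a convergent geometric series. Concretely, take $\mu=\mathcal L$ (normalized Lebesgue measure): then $M|\mu|$ is bounded, while $\int \frac{d|\mu|(t)}{(1-r)+|\theta_0-t|}\to\infty$ as $r\to 1^-$; meanwhile $I_\mu\equiv 0$ by cancellation, showing that the bound you are after is saved only by the oscillation of the Cauchy kernel, which the crude absolute-value estimate under the integral discards. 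This is exactly the distinction between the Poisson kernel (positive, $L^1$-normalized, dominated by an approximate identity) and the Cauchy/conjugate Poisson kernel (mean-zero, non-integrable tail), and it is why the weak-$(1,1)$ bound for the conjugate function is a theorem in its own right (Kolmogorov's theorem) rather than a corollary of the maximal function bound.

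The route that actually works, and that the paper (following Zygmund VII.9 and Rudin) takes, exploits holomorphicity rather than pointwise domination. Decompose $\mu$ into four positive parts; by rewriting $(z-e^{it})^{-1}$ in terms of the Herglotz kernel $\frac{e^{it}+z}{e^{it}-z}$, one reduces to the case of a finite positive measure $\nu$, for which $F:=\int\frac{e^{it}+z}{e^{it}-z}\,d\nu(t)$ has $\re F = P[\nu]\geq 0$ and $F(0)=\nu(\T)$. The Hardy--Littlewood lemma — for holomorphic $F$ with $\re F\geq 0$ and $0<p<1$, one has $\int_\T|F(re^{i\theta})|^p\,d\theta\leq (\cos\frac{p\pi}{2})^{-1}\int_\T\re(F^p)(re^{i\theta})\,d\theta = 2\pi(\cos\frac{p\pi}{2})^{-1}\re(F^p(0))\leq C_p|F(0)|^p$, using $|\arg F^p|<p\pi/2$ and the mean value property — gives $\|I_\mu\|_p\leq A_p\|\mu\|$, with $A_p\to\infty$ as $p\to 1$, exactly the outer inequality. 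The inner inequality $|N_\alpha(I_\mu)|_p\leq A_\alpha\|I_\mu\|_p$ is then the nontangential maximal theorem in $H^p$, which you do sketch correctly; it is cleanest via the subharmonicity of $|I_\mu|^{p/2}$, its majorization by the Poisson integral of $|I_\mu^\ast|^{p/2}\in L^2(\T)$, and the $L^2$-boundedness of the Hardy--Littlewood maximal operator. Your Kolmogorov interpolation step is sound and could be used to deduce the $L^p$ bound from a weak-$(1,1)$ bound for $N_\alpha(I_\mu)$, but such a weak bound must itself be obtained by a Calderón--Zygmund-type argument exploiting the cancellation of the kernel, or, as above, bypassed entirely through the positive-real-part lemma.
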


\noindent  The general harmonic analysis Lemmas \ref{lemma:CI} and \ref{lemma:CIM}  are
then applied via the spectral theorem to the resolvent of an arbitrary
unitary operator on a Hilbert space. The abstract Hilbert space result 
which we obtain in this way will then be applied to the unitary 
operators $U_{J, \sigma} $, $U_{J, \sigma} ^{-1}$ introduced in \S~\ref{sec:Beurling}.

\begin{corollary}
\label{cor:resolvent} 
(\cite{F97}, Corollary 3.4)
 Let ${\mathcal R}_U(z):{\mathcal H}\to {\mathcal H}$, 
$z\in D$, denote the resolvent of a unitary operator $U:{\mathcal H}\to 
{\mathcal H}$ on a Hilbert space $\mathcal H$. Then, for any $u$, $v\in 
{\mathcal H}$, the holomorphic functions $\Phi(u,v)(\cdot):=
({\mathcal R}(\cdot)u,v)_{\mathcal H}$ belong to the Hardy spaces $H^p(D)$, 
for any $0<p<1$. Consequently, they admit non-tangential limit almost 
everywhere on $\partial D$. Furthermore, their non-tangential maximal 
functions $N_{\alpha}(u,v)$ belong to $L^p( \T,{\mathcal L})$, for any 
$0<p<1$ and for all $\alpha <1$, and there exist constants $A_{\alpha}$, 
$A_{\alpha,p}>0$, with $A_{\alpha,p}\to \infty$ as $p\to 1$ , such that the following estimates hold: 
$$|N_{\alpha}(u,v)|_p\leq A_{\alpha}|\!|\Phi(u,v)|\!|_p\leq 
A_{\alpha,p} |\!|u|\!|_{\mathcal H}\,|\!|v|\!|_{\mathcal H}\,\,,$$
where $|\!|\cdot|\!|_{\mathcal H}$ denotes 
the Hilbert space norm. \end{corollary}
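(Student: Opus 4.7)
The plan is to reduce directly to Lemma~\ref{lemma:CIM} via the spectral theorem. By the spectral representation of the resolvent recalled at the start of \S\ref{sec:spectral_unitary}, for every $u, v \in \mathcal{H}$ and every $z \in D$ one has
$$
\Phi(u,v)(z) \,=\, ({\mathcal R}_U(z) u, v)_{\mathcal H} \,=\, \int_0^{2\pi} (z - e^{\imath t})^{-1} \, d\mu_{u,v}(t)\,,
$$
where $\mu_{u,v}$ is the complex Borel measure on $[0, 2\pi)$ (identified with $\partial D$ via $t \mapsto e^{\imath t}$) defined by $\mu_{u,v}(A) := (E_U(A) u, v)_{\mathcal H}$ for Borel sets $A$. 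In particular $\Phi(u,v)$ is a Cauchy integral of a finite complex measure on $\partial D$, of exactly the form treated in Lemma~\ref{lemma:CIM}.

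The only estimate that needs to be supplied by hand in order to invoke that lemma is the total-variation bound $|\!|\mu_{u,v}|\!| \leq |\!|u|\!|_{\mathcal H}\, |\!|v|\!|_{\mathcal H}$. For any Borel partition $\{A_j\}_{j\in\N}$ of $[0,2\pi)$, the pairwise orthogonality of the spectral projections $E_U(A_j)$ gives the identity $(E_U(A_j)u, v)_{\mathcal H} = (E_U(A_j)u, E_U(A_j)v)_{\mathcal H}$, hence by Cauchy--Schwarz applied first in $\mathcal H$ and then in $\ell^2(\N)$,
$$
\sum_j |\mu_{u,v}(A_j)| \,\leq\, \sum_j |\!|E_U(A_j)u|\!|_{\mathcal H}\, |\!|E_U(A_j)v|\!|_{\mathcal H} \,\leq\, |\!|u|\!|_{\mathcal H}\, |\!|v|\!|_{\mathcal H}\,.
$$
Taking the supremum over partitions yields the required control on $|\!|\mu_{u,v}|\!|$.

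With this bound in place, Lemma~\ref{lemma:CIM} immediately gives both the membership $\Phi(u,v) \in H^p(D)$ for every $0<p<1$ (which together with Lemma~\ref{lemma:CI} yields the non-tangential boundary values $\mathcal{L}$-almost everywhere on $\partial D$) and the chain of inequalities
$$
|N_\alpha(u,v)|_p \,\leq\, A_\alpha\, |\!|\Phi(u,v)|\!|_p \,\leq\, A_{\alpha,p}\, |\!|\mu_{u,v}|\!| \,\leq\, A_{\alpha,p}\, |\!|u|\!|_{\mathcal H}\, |\!|v|\!|_{\mathcal H}\,,
$$
with the constants $A_\alpha$ and $A_{\alpha,p}$ inherited from Lemma~\ref{lemma:CIM} (in particular $A_{\alpha,p} \to \infty$ as $p \to 1$). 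There is essentially no genuine obstacle here: the substantive harmonic analysis has already been done in Lemmas~\ref{lemma:CI} and~\ref{lemma:CIM}, and the only conceptual point specific to the present corollary is the elementary total-variation estimate above, which is what allows the abstract Hardy-space machinery to be applied uniformly to matrix coefficients of the resolvent of an arbitrary unitary operator.
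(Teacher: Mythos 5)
Your proof is correct and follows the same route the paper (via \cite{F97}, Corollary 3.4) takes: represent $\Phi(u,v)$ as a Cauchy integral of the spectral measure $\mu_{u,v}$, prove the total-variation bound $\Vert\mu_{u,v}\Vert \leq \Vert u\Vert_{\mathcal H}\Vert v\Vert_{\mathcal H}$ by orthogonality of the spectral projections and two applications of Cauchy--Schwarz, and then invoke Lemma~\ref{lemma:CIM}. The total-variation estimate is indeed the only point specific to the corollary, and your argument for it is exactly right.
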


\section{Solutions of the twisted cohomological equation}
\label{sec:STCE}

\noindent In this section we adapt to the twisted cohomological equation the streamlined version 
\cite{F07} of the main argument of  \cite{F97} (Theorem~4.1) given with the goal of establishing the sharpest 
bound on the loss of Sobolev regularity  within the reach of the methods of \cite{F97}. 

\subsection{Distributional solutions}
\label{DS}

We derive results on distributional solutions of the twisted cohomological equation from the harmonic analysis results of \S \ref{sec:spectral_unitary}  about the boundary behavior of the resolvent of a unitary 
operator.

\medskip
\begin{definition} 
\label{def:distsol}
Let $h$ be an Abelian differential and let $\sigma \in \R$. A distribution $u\in \bar H^{-r}_h(M)$ will be called a \emph {(distributional) solution} of the cohomological equation $(S +  \imath\sigma) u =f$ for a given function $f\in  \bar H^{-s}_h(M)$  if 
$$
\<u, (S + \imath\sigma) v\> = - \< f, v\>\, ,\quad \text{ \rm for all } \,\, v\in H^{r+1}_h(M) \cap \bar H^{s}_q(M)\,.
$$
\end{definition}

\smallskip
\noindent  Let $h_\theta = e^{-\imath \theta} h$ be its rotation and let $\sigma_\theta:= \sigma \cos\theta$. Let $\{S_\theta\}$ denote the one-parameter family of rotated vector fields introduced in formula~\eqref{eq:Stheta}: 
$$
S_{\theta}:=\{e^{-\imath\theta}(S+\imath\,T)+e^{\imath\theta}(S-\imath\,T)\}/2\,.
$$
For all $s\in \R$, let $\Cal H ^s_h (M) \subset H^s_{h} (M)$  the subspace of distributions 
vanishing on constant functions.

\begin{theorem}
\label{thm:distsol} Let $h$ be an Abelian differential on $M$ with minimal vertical foliation. 
Let $r>2$ and $p\in (0,1)$ be such that $rp>2$. For any $\sigma\in \R$, there exists a bounded linear operator
$$
\Cal U_\sigma: \Cal H^{-1}_{h}(M) \to L^p\left( \T,   {\bar H}^{-r}_h(M)\right)
$$
such that the following holds. For any $\sigma\in \R$ and any $f\in H^{-1}(M)$ there exists a full measure subset ${\Cal F}_r(\sigma,f)\subset  \T$ such that  $u:=\Cal U_\sigma(f)(\theta)\in {\bar H}^{-r}_h(M)$ is a distributional solution of the cohomological equation $(S_{\theta} +  \imath\sigma_\theta) u= f$, for all $\theta \in  {\Cal F}_r(f,\sigma)$.
In addition, there exists a constant $B_h:=B_h(p,r)>0$ such that, for all $f\in  \Cal H_h^{-1}(M)$, vanishing on constant functions,
$$
 \vert \Cal U_\sigma(f) \vert_{p}:= \left(\int_{ \T} \Vert \Cal U_\sigma(f)(\theta) \Vert_{-r}^p \,d\theta \right)^{1/p}
 \,\, \leq \,\,  B_{h} \, \Vert f \Vert_{-1} \,\,.
 $$
\end{theorem}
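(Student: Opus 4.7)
The plan is to exploit the factorization identity~\eqref{eq:Stheta_id},
\begin{equation*}
S_{\theta}+\imath\sigma_\theta \,=\, \frac{e^{-\imath\theta}}{2}\,\bigl(U_{J,\sigma}+e^{2\imath\theta}\bigr)\,\partial^{-}_{h,\sigma}\,,
\end{equation*}
so that solving the twisted cohomological equation $(S_\theta+\imath\sigma_\theta) u =f$ formally reduces to inverting the unitary pencil $U_{J,\sigma}+e^{2\imath\theta}$ on $L^{2}_h(M)$ and then inverting $\partial^{-}_{h,\sigma}$ on its closed range $R^{-}_{h,\sigma}$ (Proposition~\ref{prop:CR_twisted}(iv)). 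The inversion of $U_{J,\sigma}+e^{2\imath\theta}$ is precisely the resolvent $\Cal R_{J,\sigma}(z)=(U_{J,\sigma}-zI)^{-1}$ evaluated at the boundary point $z=-e^{2\imath\theta}\in\partial D$, whose non-tangential boundary behaviour is controlled by Corollary~\ref{cor:resolvent}.

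Since $f$ lies only in $\Cal H^{-1}_h(M)$, the expression $(U_{J,\sigma}+e^{2\imath\theta})^{-1}f$ is not directly meaningful; I therefore construct $\Cal U_\sigma(f)(\theta)$ by duality. For each Dirichlet eigenfunction $e_n\in\Cal E$, I express $\langle\Cal U_\sigma(f)(\theta),e_n\rangle_h$ as a resolvent matrix element of $U_{J,\sigma}$ between $f$ and an auxiliary $L^{2}_h(M)$-vector $\Lambda_{h,\sigma}(e_n)$, obtained by lifting $e_n$ back through $(\partial^{\pm}_{h,\sigma})^{-1}$ on its range (using the second form of~\eqref{eq:Stheta_id} together with the adjoint identity $(S_\theta+\imath\sigma_\theta)^{*}=-(S_\theta+\imath\sigma_\theta)$), with the deficiency subspaces $\Cal M^{\pm}_\Sigma(\sigma)$ and $K_{h,\sigma}$ absorbed into the unitary extension $U_{J,\sigma}$ of~\S\ref{sec:Beurling}. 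Corollary~\ref{cor:resolvent} then guarantees that the resulting holomorphic function on $D$ admits a non-tangential boundary value at $z=-e^{2\imath\theta}$ for every $\theta$ in a full-measure set $\Cal F_r(\sigma,f)\subset\T$.

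Given the construction, the Sobolev estimate proceeds as follows. By Lemma~\ref{lemma:CR_s} each inversion of $\partial^{\pm}_{h,\sigma}$ costs one derivative, and pairing against $f\in H^{-1}_h(M)$ costs one more; consequently, by Theorem~\ref{lemma:equivnorms} and Lemma~\ref{lemma:form_comparison}, $\Vert\Lambda_{h,\sigma}(e_n)\Vert_h \leq C_h(1+\lambda_n)$. Using the dual expansion
\begin{equation*}
\Vert \Cal U_\sigma(f)(\theta)\Vert_{-r}^{2}=\sum_{n\in\N}(1+\lambda_n)^{-r}\,\bigl|\langle \Cal U_\sigma(f)(\theta),e_n\rangle_h\bigr|^{2}\,,
\end{equation*}
raising to the power $p/2$, applying the subadditivity $(a+b)^{p/2}\leq a^{p/2}+b^{p/2}$ valid for $p<1$, integrating in $\theta$, and using the $L^p(\T)$-bound from Corollary~\ref{cor:resolvent} term by term, one obtains
\begin{equation*}
\int_{\T}\Vert \Cal U_\sigma(f)(\theta)\Vert_{-r}^{p}\,d\theta\,\leq\, C\,A_{\alpha,p}^{p}\,\Vert f\Vert_{-1}^{p}\sum_{n\in\N}(1+\lambda_n)^{p-rp}\,.
\end{equation*}
The Weyl asymptotics of Theorem~\ref{thm:Weyl} give $\lambda_n\sim cn$, and the hypothesis $rp>2$ ensures convergence of the series, yielding the claimed bound $\vert\Cal U_\sigma(f)\vert_{p}\leq B_h\Vert f\Vert_{-1}$. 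That $u=\Cal U_\sigma(f)(\theta)$ is actually a distributional solution in the sense of Definition~\ref{def:distsol} is then verified by pairing against $v\in H^{r+1}_h(M)\cap\bar H^s_h(M)$, applying~\eqref{eq:Stheta_id} to $(S_\theta+\imath\sigma_\theta)v$, and invoking the boundary resolvent identity $(U_{J,\sigma}+e^{2\imath\theta})\Cal R_{J,\sigma}(-e^{2\imath\theta})=I$, valid precisely when the non-tangential limit exists, i.e.\ for $\theta\in\Cal F_r(\sigma,f)$.

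The main obstacle is the simultaneous control of the two small parameters: $p<1$ is forced by the Fatou-type boundary behaviour of Cauchy integrals underlying Corollary~\ref{cor:resolvent}, and one then must interchange the Dirichlet eigenbasis sum with the $L^p$-in-$\theta$ integral, which is delicate for $p<1$ and forces the use of subadditivity rather than Minkowski. Balancing the resulting $(1+\lambda_n)^{p-rp}$ growth against Weyl's law is what produces the combined hypothesis $rp>2$ on top of $r>2$ (the latter reflecting the two-derivative loss from one inversion of $\partial^{\pm}_{h,\sigma}$ plus one pairing with $H^{-1}_h(M)$). Handling the finite-dimensional deficiency subspaces $\Cal M^{\pm}_\Sigma(\sigma)$ and $K_{h,\sigma}$ uniformly, so that the factorization becomes an exact identity on all of $L^{2}_h(M)$ via the unitary extension $U_{J,\sigma}$, also requires care, and the crucial uniformity for contributions from $K_{h,\sigma}$ rests on Lemma~\ref{lemma:form_comparison}.
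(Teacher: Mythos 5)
The proposal captures the overall strategy of the paper — the factorization identity~\eqref{eq:Stheta_id}, the boundary behaviour of the resolvent of $U_{J,\sigma}$ via Corollary~\ref{cor:resolvent}, the eigenbasis expansion, the $L^{p/2}$ subadditivity for $p<1$, and the Weyl asymptotics to close the series — and you correctly identify the role of Lemma~\ref{lemma:form_comparison} in controlling $K_{h,\sigma}$. But there is a genuine gap in the construction and the exponent bookkeeping does not reproduce the hypothesis $rp>2$.

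The gap lies in the ansatz for $\Cal U_\sigma(f)(\theta)$. You propose to write $\langle\Cal U_\sigma(f)(\theta),e_n\rangle_h$ as a resolvent matrix element of $U_{J,\sigma}$ between $f$ and an auxiliary $L^2_h(M)$-vector $\Lambda_{h,\sigma}(e_n)$, obtained by lifting $e_n$ through $(\partial^{\pm}_{h,\sigma})^{-1}$. However $f\in H^{-1}_h(M)$ is a distribution, not an element of $L^2_h(M)$, and $U_{J,\sigma}$ (and hence its resolvent) is a unitary operator on $L^{2}_h(M)$ only. Corollary~\ref{cor:resolvent} requires both entries of the matrix element $\langle\Cal R_{J,\sigma}(z)\,\cdot\,,\,\cdot\,\rangle_h$ to lie in the Hilbert space $\Cal H=L^2_h(M)$; with $f$ in one slot it is not defined, and there is no reason for $U_{J,\sigma}$ to extend to a unitary on $H^{-1}_h(M)$. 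The paper handles this by a Riesz representation step that you have not reproduced: the maps $\partial^{\pm}_{h,\sigma}v\mapsto -\langle f,v\rangle$ are bounded linear functionals on the closed ranges $R^{\pm}_{h,\sigma}\subset L^2_h(M)$ (boundedness by Lemma~\ref{lemma:form_comparison} and Proposition~\ref{prop:CR_twisted}), whence the Riesz theorem produces $F^{\pm}_\sigma\in L^2_h(M)$ with $\vert F^{\pm}_\sigma\vert_0\leq C_h\Vert f\Vert_{-1}$ and $\partial^{\pm}_{h,\sigma}F^{\pm}_\sigma=f$ weakly. The relevant matrix elements are then $\langle\Cal R^{\pm}_{J,\sigma}(z)\,e_k,F^{\pm}_\sigma\rangle_h$, a legitimate pairing of two $L^2_h$ vectors. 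This is the device that transfers the $H^{-1}$ regularity of $f$ into the $L^2$ setting where the Hardy-space boundary estimates live; without it your construction is not well-posed.

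The exponent bookkeeping also goes wrong in a way that reflects this. In the paper the $F^{\pm}_\sigma$-norm is bounded by $\Vert f\Vert_{-1}$ uniformly in $k$, and the decay $(1+\lambda_k)^{-r}$ comes from Cauchy--Schwarz against $\Vert w\Vert_r$ in~\eqref{eq:Rbound}; the series in~\eqref{eq:NLpbound} therefore involves $(1+\lambda_k)^{-rp/2}$, and Weyl's law gives convergence exactly for $rp/2>1$, i.e.\ $rp>2$. Your claimed bound $\Vert\Lambda_{h,\sigma}(e_n)\Vert_h\leq C_h(1+\lambda_n)$ and the resulting exponent $(1+\lambda_n)^{p-rp}$ would instead require $(r-1)p>1$, which is neither the theorem's hypothesis nor consistent with it. Finally, note that the paper does not build $\Cal U_\sigma(f)(\theta)$ from a Fourier ansatz at all: it first establishes the a priori bound $\vert\langle f,v\rangle\vert\leq A_{h,\sigma}(f,\theta)\Vert(S_\theta+\imath\sigma_\theta)v\Vert_r$ (Lemma~\ref{lemma:aprioribound}), then defines $\Cal U_\sigma(f)(\theta)$ as the canonical extension of the functional $(S_\theta+\imath\sigma_\theta)v\mapsto -\langle f,v\rangle$ to the closure of the range in $\bar H^r_h(M)$, vanishing on the orthogonal complement. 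This route is cleaner and avoids the circularity of prescribing the coefficients in advance.
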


\noindent  The above theorem is a consequence of the following estimate:
\begin{lemma}
\label{lemma:aprioribound}
Let $h$ be an Abelian differential on $M$ with minimal vertical foliation. 
Let $r>2$ and $p\in (0,1)$ be such that $rp>2$.  For any $\sigma\in \R$  and any $f\in 
\Cal H_{h}^{-1}(M)$, vanishing on constant functions, there exists a measurable function $A_{h,\sigma}(f):=A_{h,\sigma} (f, p,r) \in L^p( \T, {\mathcal L})$ such that, for all $v\in H_h^{r+1}(M)$ 
we have
\begin{equation}
\label{eq:aprioribound}
\vert \<f,v\> \vert \leq A_{h,\sigma}(f, \theta) \,\Vert (S_{\theta}+  \imath\sigma_\theta) v \Vert_r\,\,.  
\end{equation}
In addition, the following bound for the $L^p$ norm of the function $A_{h,\sigma}(f)$ holds. There exists a constant $B_h:=B_{h}(p,r)>0$ such that, for every $\sigma \in \R$ and for every $f\in H^{-1}_h(M)$, vanishing on constant functions, we have 
\begin{equation}
\label{eq:Lpbound}
\vert A_{h,\sigma}(f) \vert_p \leq B_{h} \, \Vert f \Vert_{-1} \,\,.
\end{equation}

\end{lemma}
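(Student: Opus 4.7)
The plan is to reduce, via duality and the partial-isometry factorization~\eqref{eq:Stheta_id} of Proposition~\ref{prop:CR_twisted}, an \emph{a priori} bound for $S_\theta + \imath \sigma_\theta$ to a boundary-resolvent estimate for the unitary operator $U_{J,\sigma}$, and then to extract Sobolev regularity by projecting onto the Dirichlet eigenbasis and summing via the Weyl asymptotic.

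First I reduce to $v \in K_{h,\sigma}^\perp \cap H^{r+1}_h(M)$: since the vertical foliation is minimal, $K_{h,\sigma}$ is trivial when $\sigma \neq 0$ and consists of constants when $\sigma = 0$, while the hypothesis that $f$ vanishes on constants together with $(S_\theta + \imath \sigma_\theta) v_K = 0$ for every $v_K \in K_{h,\sigma}$ allow the orthogonal projection at no cost to either side of the estimate. By Proposition~\ref{prop:CR_twisted}(iv) and Lemma~\ref{lemma:form_comparison} the restriction of $\partial^-_{h,\sigma}$ to $K_{h,\sigma}^\perp \cap H^1_h(M)$ is an isomorphism onto $R^-_{h,\sigma}$ with bounded inverse; the key point is that only the \emph{lower} bound $C_h^{-1} Q_{h,0} \leq Q_{h,\sigma}$ of Lemma~\ref{lemma:form_comparison}, which is uniform in $\sigma$, is used here. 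The closed range theorem then provides $F \in R^-_{h,\sigma}$ with
\begin{equation*}
\<f, v\> \,=\, \<F, \partial^-_{h,\sigma} v\>_h, \qquad \Vert F \Vert_0 \,\leq\, C_h \Vert f \Vert_{-1}.
\end{equation*}

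Set $\phi := \partial^-_{h,\sigma} v \in H^r_h(M)$ (Lemma~\ref{lemma:CR_s}) and $w := (S_\theta + \imath \sigma_\theta) v \in H^r_h(M)$; identity~\eqref{eq:Stheta_id} reads $2 e^{\imath\theta} w = (U_{J,\sigma} + e^{2\imath\theta}) \phi$. Writing $U := U_{J,\sigma}$ and $\Cal R_U(z) := (U - zI)^{-1}$ on $D$, the spectral theorem for the unitary $U$ combined with $\phi, w \in L^2_h(M)$ gives, by dominated convergence against the spectral measure of $U$,
\begin{equation*}
\phi \,=\, 2 e^{\imath\theta}\lim_{\rho \to 1^-} \Cal R_U(-\rho e^{2\imath\theta}) w \quad \text{in } L^2_h(M)
\end{equation*}
for every $\theta$ such that $-e^{2\imath\theta}$ is not a point eigenvalue of $U$ (a co-countable condition). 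Let $\{e_n\}_{n\in \N}$ be the Dirichlet eigenbasis of Theorem~\ref{thm:Dirichlet} with eigenvalues $\lambda_n$, and set $\Psi_n(z) := \<F, \Cal R_U(z) e_n\>_h$ on $D$. By Corollary~\ref{cor:resolvent}, each $\Psi_n \in H^p(D)$ for $0 < p < 1$ and $\vert N_\alpha(\Psi_n) \vert_p \leq A_{\alpha,p} \Vert F\Vert_0$. Define
\begin{equation*}
A_{h,\sigma}(f, \theta) \,:=\, 2 \Big( \sum_{n \in \N}(1+\lambda_n)^{-r}\, N_\alpha(\Psi_n)(2\theta + \pi)^2\Big)^{1/2}.
\end{equation*}

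For the pointwise bound, expand $w = \sum_n \hat w_n e_n$ and choose $z_\rho := -\rho e^{2\imath\theta} \in \Omega_\alpha(2\theta + \pi)$; a weighted Cauchy--Schwarz in $\ell^2$ with weights $(1+\lambda_n)^{\pm r/2}$ yields
\begin{equation*}
\vert \<F, \Cal R_U(z_\rho) w\>_h \vert \,\leq\, \sum_n \vert \hat w_n \vert N_\alpha(\Psi_n)(2\theta + \pi) \,\leq\, \tfrac12 \Vert w \Vert_r \, A_{h,\sigma}(f, \theta),
\end{equation*}
and the limit $\rho \to 1^-$ delivers $\vert \<f,v\> \vert = \vert \<F, \phi\>_h \vert \leq A_{h,\sigma}(f, \theta) \Vert (S_\theta + \imath \sigma_\theta) v \Vert_r$. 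For the $L^p$-bound, since $p/2 < 1$ the elementary inequality $(\sum a_n)^{p/2} \leq \sum a_n^{p/2}$ combined with Fubini gives
\begin{equation*}
\vert A_{h,\sigma}(f) \vert_p^{\,p} \,\leq\, 2^p A_{\alpha,p}^p \Vert F \Vert_0^p \sum_{n\in \N} (1+\lambda_n)^{-rp/2},
\end{equation*}
and by the Weyl asymptotic (Theorem~\ref{thm:Weyl}), $\lambda_n \asymp n$, so the series converges exactly under the hypothesis $rp > 2$; together with $\Vert F \Vert_0 \leq C_h \Vert f \Vert_{-1}$ this gives $\vert A_{h,\sigma}(f)\vert_p \leq B_h \Vert f \Vert_{-1}$ with $B_h$ depending only on $h, p, r$. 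The main obstacle lies in the third paragraph: identifying the $L^2_h$-limit of $\Cal R_U(-\rho e^{2\imath\theta}) w$ with $\tfrac12 e^{-\imath\theta}\phi$ off a null set in $\theta$, and simultaneously pushing the uniform-in-$\rho$ pointwise estimate through that limit. The threshold $rp > 2$ emerges as the precise match between the Weyl growth of $\lambda_n$ and the $L^p$-integrability of Hardy-space boundary values from Section~\ref{sec:spectral_unitary}.
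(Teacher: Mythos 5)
Your proof is correct and follows the same overall architecture as the paper's: construct $F$ by Riesz representation on the closed range $R^-_{h,\sigma}$ with $\Vert F\Vert_0 \lesssim \Vert f\Vert_{-1}$ (uniformly in $\sigma$, using only the lower bound of Lemma~\ref{lemma:form_comparison}); expand against the Dirichlet eigenbasis; invoke the Hardy-space bound of Corollary~\ref{cor:resolvent} on each matrix element; combine with weighted Cauchy--Schwarz, the $L^{p/2}$ sub\-additivity, and the Weyl asymptotic under $pr>2$. The one genuine variation is the limiting step: the paper applies the resolvent to the identity~\eqref{eq:Stheta_id} to obtain~\eqref{eq:keyidentity}, in which the unwanted term $\langle \Cal R^{\pm}_{J,\sigma}(z)\partial^{\pm}_{h,\sigma}v, F^{\pm}_\sigma\rangle_h$ carries the vanishing scalar $(z+e^{\mp 2\imath\theta})$ and so disappears in the non-tangential limit wherever the maximal function is finite; you instead prove directly, via the spectral theorem and dominated convergence, that $(1-\rho)\Cal R_U(-\rho e^{2\imath\theta})\phi\to 0$ in $L^2$ for every $\theta$ with $-e^{2\imath\theta}$ not a point eigenvalue, so that $\Cal R_U(z_\rho)w\to \tfrac{1}{2}e^{-\imath\theta}\phi$ radially. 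The two are essentially the same computation, organized differently; yours makes the exceptional set (a countable set of $\theta$) explicit at the cost of invoking the spectral calculus. One cosmetic slip: with the convention~\eqref{eq:0norm}, the function $\Psi_n(z):=\langle F, \Cal R_U(z)e_n\rangle_h$ is \emph{anti}-holomorphic in $z$ (it equals $\overline{\langle \Cal R_U(z)e_n, F\rangle_h}$), so it is not literally in $H^p(D)$; you should define $\Psi_n(z):=\langle \Cal R_U(z)e_n, F\rangle_h$ as in the paper's~\eqref{eq:matrixelements}, or note that the maximal-function bound applies to the conjugate. This does not affect the estimates, since the non-tangential maximal function of an anti-holomorphic function agrees with that of its holomorphic conjugate.
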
 
\begin{proof}
We recall the formulas \eqref{eq:Stheta_id}:
\begin{equation}
\label{eq:CEidentity}
S_{\theta} +\imath \sigma_\theta ={{e^{-\imath\theta}}\over 2}\,\Bigl(U_{J, \sigma}+e^{2\imath\theta}\Bigr)
\,\partial^-_{h, \sigma}={{e^{\imath\theta}}\over 2}\,\Bigl(U_{J, \sigma} ^{-1}+e^{-2\imath\theta}\Bigr)
\,\partial^+_{h, \sigma}\,.
\end{equation}
The proof of estimate \eqref{eq:aprioribound} is going to be based on properties of the resolvent 
of the operator $U_{J,\sigma}$. In fact, the proof of \eqref{eq:aprioribound} is based on the results, 
summarized in \S~\ref{sec:spectral_unitary}, concerning the non-tangential boundary behavior 
of the resolvent of a unitary operator on a Hilbert space, applied to the operators $U_{J, \sigma}$,  
$U_{J, \sigma}^{-1}$ on $L^2_h(M)$. The Fourier analysis of \cite{F97}, \S 2, also plays a relevant role 
through Lemma \ref{lemma:equivnorms} and the Weyl's asymptotic formula (Theorem \ref{thm:Weyl}). 

\smallskip
\noindent  Since the vertical foliation of $h$ is minimal, it follows that all $T$-invariant functions
in the space $H^1_h(M)$ are constant, hence the common kernel of the twisted Cauchy-Riemann
operators $K_{h,\sigma} \subset H^1_h(M)$ coincides with the subspace of constant functions. 

\smallskip
\noindent Following \cite{F97}, Prop. 4.6A, or \cite{F02}, Lemma 7.3, we prove that there exists a constant $C_{h}>0$ such that the following holds. 

\noindent For any $\sigma\in \R$ and for any distribution $f\in H^{-1}_h(M)$ there exist  (weak) solutions $F^{\pm}_\sigma \in L^2_h(M)$ of the equations  $\partial_{h,\sigma} ^{\pm} F^{\pm}_\sigma=f$ such that 
\begin{equation}
\label{eq:CRsolbound}
\vert F^{\pm}_\sigma\vert_0 \leq C_{h} \, \Vert f \Vert_{-1}\,.
\end{equation}
In fact, the maps given by
\begin{equation}
\label{eq:CRsol}
 \partial^{\pm}_{h,\sigma} v  \to  - \<f,v\> \, \,, \quad \hbox{ for all }v\in H^1_h(M)\,,
\end{equation}
are bounded linear functionals on the (closed) ranges $R^{\pm}_{h,\sigma}\subset L^2_h(M)$ of the twisted Cauchy-Riemann operator $\partial^{\pm}_{h, \sigma} :H^1_h(M)\to L^2_h(M)$. In fact, the functionals are well-defined since by assumption $K_{h, \sigma}=\C$ and  $f$ vanishes on constant function, and they are bounded since, by Lemma~\ref{lemma:form_comparison} and Proposition~\ref{prop:CR_twisted}, there exists a constant $C_{h}>0$ such that, for any $\sigma \in \R$ and for any $v\in H^1_h(M)$ of zero average,
\begin{equation}
\label{eq:solboundone}
\begin{aligned}
\vert \<f,v\> \vert \,&\leq\, \Vert f\Vert_{-1} \vert v \vert_1  \leq C'_h \Vert f\Vert_{-1} Q_{h,0}(v)  \\ & \leq
 \, C_h \Vert f\Vert_{-1} Q_{h,\sigma}(v)\, = \, C_{h} \, \Vert f\Vert_{-1} \, 
\vert \partial^{\pm}_{h,\sigma} v \vert_0\,\,.
\end{aligned}
\end{equation}
Let $\Phi^{\pm}_\sigma$ be the unique linear extension of the linear map \eqref{eq:CRsol} to $L^2_h(M)$ 
which vanishes on the orthogonal complement of $R^{\pm}_{h,\sigma}$ in $L^2_h(M)$. By \eqref{eq:solboundone}, the functionals $\Phi^{\pm}_\sigma$  are bounded on $L^2_h(M)$ with norm
$$
\Vert \Phi^{\pm}_\sigma \Vert  \leq   C_{h} \, \Vert f\Vert_{-1}\,.
$$
By the Riesz representation theorem, there exist two (unique) functions $F^{\pm}_\sigma\in L^2_h(M)$ such
that
$$
\<v, F^{\pm}_\sigma\>_h  \,=\,   \Phi^{\pm}_\sigma (v)\,, \quad \text{ \rm for all } \, v\in L^2_h(M)\,.
$$
The functions $F^{\pm}_\sigma$ are by construction (weak) solutions of the twisted Cauchy--Riemann equations $\partial_{h,\sigma}^{\pm} F^{\pm}_\sigma=f$ satisfying the required bound \eqref{eq:CRsolbound}.

\smallskip
\noindent The identities \eqref{eq:CEidentity}  immediately imply that
\begin{equation}
\label{eq:keyidentity}
\begin{aligned}
\<{\partial}^{\pm}_{h,\sigma} v,F^{\pm}_\sigma\>_h&=2 e^{\mp i \theta}\, \<{\Cal R}^{\pm}_{J,\sigma}(z)(S_{\theta}+ \imath\sigma_\theta) v,F^{\pm}_\sigma\>_h \\ &-(z+e^{\mp2 \imath\theta})\<{\Cal R}^{\pm}_{J,\sigma}(z){\partial}^{\pm}_h v,F^{\pm}_\sigma\>_h\,,
\end{aligned}
\end{equation}
where ${\Cal R}^{+}_{J,\sigma}(z)$ and ${\Cal R}^{-}_{J,\sigma}(z)$ denote the resolvents of the unitary operators 
$U_{J,\sigma}$ and $U_{J,\sigma}^{-1}$ respectively, which yield holomorphic families of bounded operators on
the unit disk $D\subset \C$.  

\smallskip
\noindent Let $r>2$ and let $p\in (0,1)$ be such that $pr>2$. Let $\Cal E=\{e_k\}_{k\in {\N}}$ be the orthonormal Fourier basis of the Hilbert space $L^2_h(M)$ described in \S \ref{sec:analysis}. By Corollary \ref{cor:resolvent} all holomorphic functions
\begin{equation}
\label{eq:matrixelements}
{\Cal R}^{\pm}_{h,\sigma,k}(z):=\<{\Cal R}^{\pm}_{J,\sigma}(z) e_k,F^{\pm}_\sigma\>_h\,\,,\quad k \in {\N}\,,
\end{equation}
belong to the Hardy space $H^p(D)$, for any $0<p<1$. The corresponding non-tangential maximal functions $N_k^{\pm}$ (over cones of arbitrary fixed aperture $0<\alpha<1$) belong to the space $L^p( \T,{\mathcal L})$ and for all $0<p<1$  there exists a constant $A_{\alpha,p}>0$ such that, for any Abelian differential $h$ on $M$, for every $\sigma\in \R$ and $k\in \N$,  the following inequalities hold:
\begin{equation}
\label{eq:Nkbound}
\vert N_{h,\sigma,k}^{\pm} \vert_p\leq A_{\alpha,p} \vert e_k \vert_0\, \vert F^{\pm} _\sigma\vert_0=
A_{\alpha,p} \, \vert F^{\pm} _\sigma\vert_0 \leq  A_{\alpha,p}\, C_{h} \, \vert f\vert _{-1}  \,\,. 
\end{equation}
Let $\{\lambda_k\}_{k\in {\N}}$ be the sequence of the eigenvalues of the Dirichlet form $\Cal Q:= \Cal Q_{h,0}$ introduced in \S \ref{sec:analysis}. Let $w\in  \bar H^r_h(M)$. We  have
\begin{equation}
\label{eq:Rsum}
\<{\Cal R}^{\pm}_{J,\sigma}(z)w,F^{\pm}_\sigma\>_h = \sum_{k=0}^{\infty}\<w,e_k\>_h\,\,{\Cal R}^{\pm}_{h,\sigma,k}(z) \,\,,
\end{equation}
hence, by the Cauchy-Schwarz inequality,
\begin{equation}
\label{eq:Rbound}
\vert\<{\Cal R}^{\pm}_{J,\sigma}(z)w,F^{\pm}_\sigma\>_h\vert \leq \Bigl( \sum_{k=0}^{\infty}
\frac{\vert {\Cal R}^{\pm}_{h,\sigma,k}(z)\vert^2}{ (1+\lambda_k)^r}\,  \Bigr) ^{1/2}
 \Vert w\Vert_r \,\,,
\end{equation}
Let $N^{\pm}_{h,\sigma}(\theta)$ be the functions defined as 
\begin{equation}
\label{eq:Ndef}
N^{\pm}_{h, \sigma}(\theta):= \Bigl( \sum_{k=0}^{\infty} \frac{\vert N_{h,\sigma,k}^{\pm}(\theta)\vert^2}{ (1+\lambda_k)^r}
\Bigr)^{1/2}\,\,.
\end{equation}
Let $N^{\pm}_{h, \sigma}(w)$ denote the non-tangential maximal function for the holomorphic function $\<{\Cal R}^{\pm}_{J,\sigma}(z)w,F^{\pm}_\sigma\>_h$.  By formulas \eqref{eq:Rbound} and \eqref{eq:Ndef}, it follows that, for all $\theta\in  \T$ and all functions $w \in {\bar H}^r_h(M)$, we have
\begin{equation}
\label{eq:Nwbound}
N^{\pm}_{h,\sigma}(w)(\theta) \leq  N^{\pm}_{h,\sigma}(\theta) \,  \Vert w\Vert_r  \,\,.
\end{equation}
The functions $N^{\pm}_{h,\sigma} \in L^p( \T,{\mathcal L})$ for any $0<p<1$. In fact, by formula
\eqref{eq:Nkbound} and (following a suggestion of Stephen Semmes) by the `triangular  inequality'
for the space $L^{p/2}$ with  $0<p<1$, we have
\begin{equation}
\label{eq:NLpbound}
\vert N^{\pm}_{h,\sigma}  \vert^p_p  \leq (A_{\alpha,p} \, C_{h} )^p \bigl(\sum_{k=0}^{\infty}
\frac{1}{ (1+\lambda_k)^{pr/2}}\bigr) \, \Vert f\Vert _{-1} ^p \,<\, +\infty \,\,.
\end{equation}
The series in formula \eqref{eq:NLpbound} is convergent by the Weyl asymptotics
(Theorem \ref{thm:Weyl}) since $pr/2>1$. Let then
$$
B_h(p,r) := (A_{\alpha,p} \, C_{h} ) \bigl(\sum_{k=0}^{\infty}
\frac{1}{ (1+\lambda_k)^{pr/2}}\bigr)^{1/p}\,.
$$
By taking the non-tangential limit as $z\to -e^{\mp 2 \imath\theta} $ in the identity 
\eqref{eq:keyidentity}, formula \eqref{eq:Nwbound} implies that, for all $\theta\in  \T$
such that $N^{\pm}_{h,\sigma} (\pi\mp 2\theta)<+\infty$, 
$$
\vert \<{\partial}^{\pm}_{h,\sigma} v,F^{\pm}_\sigma\>_h \vert \leq N^{\pm}_{h,\sigma} (\pi\mp 2\theta) \,
 \Vert (S_{\theta} + \imath \sigma_\theta) v \Vert_r  \,\,,
 $$
hence the required estimates \eqref{eq:aprioribound} and \eqref{eq:Lpbound} are proved
with the choice of the function $A_{h,\sigma} (f,\theta):= N^+_{h,\sigma} (\pi-2\theta)$ or  $A_{h,\sigma} (f,\theta):= N^- _{h,\sigma} (\pi+2\theta)$, for all $\theta\in  \T$. 
\end{proof}

\begin{proof}[Proof of Theorem~\ref{thm:distsol}] By the estimate 
\eqref{eq:aprioribound} of Lemma~\ref{lemma:aprioribound},  the linear map given by
\begin{equation}
\label{eq:functional}
(S_{\theta} + \imath \sigma_\theta) v\to -\<f,v\>\,\,, \quad \hbox{ for all } v\in H_h^{r+1}(M)\,\,,
\end{equation}
is well defined and extends by continuity to the closure of the range ${\bar R}^r_\sigma(\theta)$ of the linear operator $S_{\theta}+ \imath \sigma_\theta $ in ${\bar H}^r_h(M)$. Let $\Cal U_\sigma(f)(\theta)$ be the extension uniquely defined  by the condition that $\Cal U_\sigma(f)(\theta)$ vanishes on the orthogonal complement of ${\bar R}^r_\sigma(\theta)$ in ${\bar H}^r_h(M)$. By construction, for almost all $\theta\in  \T$ the linear functional $u:=\Cal U_\sigma(f)(\theta)\in {\bar H}^{-r}_h(M)$ yields a distributional solution of the cohomological equation  $(S_{\theta}+ \imath\sigma_\theta) u=f$ whose norm satisfies the bound 
$$
\Vert \Cal U_\sigma(f)(\theta) \Vert_{-r} \leq A_{h,\sigma}(f,\theta)\,.
$$
By \eqref{eq:Lpbound} the $L^p$ norm of the measurable function $\Cal U_\sigma(f): \T \to {\bar H}^{-r}_h(M)$ satisfies the required estimate
$$
 \vert \Cal U_\sigma(f) \vert_{p}:= \left(\int_{ \T} \Vert \Cal U_\sigma(f)(\theta) \Vert_{-r}^p \,d\theta \right)^{1/p}
 \,\, \leq \,\,  B_{h} \, \Vert f \Vert_{-1} \,\,.
 $$
 
\end{proof}

\begin{theorem} 
\label{thm:CEdistribution} 
Let $h$ be an Abelian differential with minimal vertical foliation.
For any $r>2$ and $p\in (0,1)$ such that $pr >2$, there exists a constant $C_{h,p,r}>0$  such that, for all zero-average functions $f\in {\bar H}^{r-1}_h(M)$, for all $\sigma \in \R$ and for Lebesgue almost all
$\theta \in \T$, the twisted cohomological equation $(S_{\theta} +  \imath\sigma_\theta) u=f$ has a distributional solution $u_\theta \in {\bar H}^{-r}_h(M)$  satisfying the following estimate:
\begin{equation}
\label{eq:solboundtwo}
\left(\int_\T \Vert u_\theta \Vert_{-r}^p  d\theta \right)^{1/p}  \leq C_{h,p,r}\, \Vert f \Vert_{r-1}\,\,.
\end{equation}
\end{theorem}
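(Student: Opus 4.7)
The plan is to deduce Theorem~\ref{thm:CEdistribution} as a direct consequence of Theorem~\ref{thm:distsol} by checking that a zero-average $f \in \bar H^{r-1}_h(M)$ falls under the hypotheses of the earlier theorem and then controlling the Friedrichs norm $\Vert f\Vert_{-1}$ by $\Vert f\Vert_{r-1}$.

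First I would verify the membership $f \in \Cal H^{-1}_h(M)$. For $r > 2$ the Friedrichs Sobolev space $\bar H^{r-1}_h(M)$ is continuously embedded in $L^2_h(M)$ by the spectral definition (every coefficient $\<f,e_n\>_h$ is square-summable with a weight $(1+\lambda_n)^{r-1}\ge 1$), and $L^2_h(M)$ sits continuously inside $H^{-1}_h(M)$. Since a zero-average function automatically vanishes on the constant function, $f$ lies in the subspace $\Cal H^{-1}_h(M)$ on which the operator $\Cal U_\sigma$ of Theorem~\ref{thm:distsol} is defined.

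Next I would compare norms. The Friedrichs norms $\Vert\cdot\Vert_s$ are monotone in $s$ because $1+\lambda_n\ge 1$, so directly $\Vert f\Vert_{-1}\le \Vert f\Vert_{r-1}$. Combined (if needed) with the comparability $|f|_{-1}\asymp \Vert f\Vert_{-1}$ from Lemma~\ref{lemma:equivnorms}, this yields a constant $C=C_h$ with $\Vert f\Vert_{-1}\le C\,\Vert f\Vert_{r-1}$. I would then set $u_\theta := \Cal U_\sigma(f)(\theta)$ for almost every $\theta\in\T$. By Theorem~\ref{thm:distsol}, $u_\theta\in\bar H^{-r}_h(M)$ and satisfies the pairing identity $\<u_\theta,(S_\theta+\imath\sigma_\theta)v\>=-\<f,v\>$ for all $v\in H^{r+1}_h(M)\cap\bar H^1_h(M)$; a brief check using Lemma~\ref{lemma:comparison} shows that for $r>2$ this test class and the one required by Definition~\ref{def:distsol} for an input in $\bar H^{r-1}_h(M)$ (namely $v\in H^{r+1}_h(M)\cap\bar H^{1-r}_h(M)$) both reduce to $H^{r+1}_h(M)$, since $\bar H^{1-r}_h(M)\supset L^2_h(M)\supset H^{r+1}_h(M)$. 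So $u_\theta$ is a distributional solution in the sense required by the statement.

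Finally, chaining the $L^p$ bound of Theorem~\ref{thm:distsol} with the norm comparison gives
$$
\left(\int_\T \Vert u_\theta\Vert_{-r}^p\,d\theta\right)^{1/p}\ \le\ B_h(p,r)\,\Vert f\Vert_{-1}\ \le\ C_{h,p,r}\,\Vert f\Vert_{r-1},
$$
with $C_{h,p,r}:=C\cdot B_h(p,r)$. No substantive new estimate is required: the content has been entirely absorbed in Lemma~\ref{lemma:aprioribound} and Theorem~\ref{thm:distsol}, and the only obstacle here is the bookkeeping of the different weighted Sobolev scales and test-function classes — which is genuinely mild, as the monotonicity of the Friedrichs norms and the embeddings already recorded in Section~\ref{sec:analysis} handle it uniformly in $\sigma$.
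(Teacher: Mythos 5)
Your proof is correct, and it is genuinely shorter than the one in the paper while leading to the same conclusion. The paper's own argument re-expands $f$ in the Fourier basis $\{e_k\}$, applies Theorem~\ref{thm:distsol} to each individual mode $e_k$ to get solutions $u_k(\theta)=\Cal U_\sigma(e_k)(\theta)$ with $L^p$-in-$\theta$ norm controlled by $\Vert e_k\Vert_{-1}=(1+\lambda_k)^{-1/2}$, sets $u_\theta=\sum_k\<f,e_k\>_h\,u_k(\theta)$, and recovers the bound with $\Vert f\Vert_{r-1}$ by a Cauchy--Schwarz with weights $(1+\lambda_k)^{r-1}$, followed by the $L^{p/2}$ quasi-triangle inequality and Weyl asymptotics. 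You instead observe that a zero-average $f\in\bar H^{r-1}_h(M)$ with $r>2$ already lies in the domain $\Cal H^{-1}_h(M)$ of the operator $\Cal U_\sigma$ furnished by Theorem~\ref{thm:distsol}, apply the operator directly, and replace the whole second Fourier expansion by the elementary monotonicity $\Vert f\Vert_{-1}\le\Vert f\Vert_{r-1}$ of the Friedrichs scale. In effect, you are exploiting the boundedness and linearity of $\Cal U_\sigma$ already asserted in Theorem~\ref{thm:distsol}, whereas the paper re-derives that fact mode-by-mode; the price the paper pays is a slightly larger constant (an extra factor of $(\sum_k(1+\lambda_k)^{-pr/2})^{1/p}$ is incurred in the re-summation), while the payoff is a proof that keeps the Weyl-asymptotics dependence fully on the surface. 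Your verification that the two test classes in Definition~\ref{def:distsol} (for $s=1$ versus $s=1-r<0$) both reduce to $H^{r+1}_h(M)$ is also the correct bookkeeping point to make, and it holds for the reason you state.
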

\begin{proof}   Let $\Cal E=\{e_k\}_{k\in {\N}}$ be the orthonormal Fourier basis of the Hilbert space $L^2_h(M)$ described in \S \ref{sec:analysis}. Let $r>2$ and $p\in (0,1)$ be such that $pr>2$.

\noindent  By Theorem \ref{thm:distsol}, for any $k\in \N\setminus\{0\}$ there exists a function with distributional values $u_k := \Cal U_\sigma(e_k)\in L^p\left( \T, {\bar H}^{-r}_h(M)\right)$ such that the following holds. There 
exists a constant $C_{h,r}:=C_h(p,r) >0$ such that
\begin{equation}
\label{eq:kpnorm}
\left( \int_{ \T}  \Vert u_k (\theta) \Vert_{-r}^p \, d\theta \right)^{1/p} \,\leq\,   C_{h,r}\, \Vert  e_k \Vert_{-1}    \, \leq \,C_{h,r} \,(1+\lambda_k)^{-1/2}  \,\,.
\end{equation}
In addition, for any $k\in \N\setminus\{0\}$, there exists a full measure set $\Cal F_k(\sigma)\subset  \T$ such that, for all $\theta\in {\Cal F}_k(\sigma)$, the distribution $u:=u_k(\theta) \in {\bar H}^{-r}_h(M)$ is a (distributional) solution of the cohomological equation $(S_{\theta} + \imath\sigma_\theta) u  = e_k$. 

\smallskip
\noindent Any function $f\in {\bar H}^{r-1}_h(M)$ of zero average has a Fourier decomposition in $L^2_h(M)$:
$$
f = \sum_{k\in\N\setminus\{0\}} \<f,e_k\>_h \, e_k \,\,.
$$
A (formal) solution of the cohomological equation $(S_{\theta} + \imath\sigma_\theta) u =f$ is therefore 
given by the series
\begin{equation}
\label{eq:solseries}
u_{\theta}:=  \sum_{k\in\N\setminus\{0\}} \<f,e_k\>_h\, u_k(\theta)\,\,.
\end{equation}
By the triangular inequality in ${\bar H}^{-r}_h(M)$ and by H\"older inequality, we have
$$
\Vert u_{\theta}\Vert_{-r} \leq  \left(\sum_{k\in\N\setminus\{0\}}   
\frac{ \Vert u_k(\theta)\Vert_{-r}^2}{(1+\lambda_k)^{r-1}}\right)^{1/2} \, \Vert f\Vert_{r-1}\,\,,
$$
hence by the `triangular inequality'  for $L^p$ spaces (with $0<p<1$) and by the estimate
\eqref{eq:kpnorm},
\begin{equation}
\label{eq:Lpsol}
\int_{ \T}  \Vert u_{\theta} \Vert_{-r}^p \, d\theta  \,\,\leq \,\, C_{h,r}^p \, \left( \sum_{k\in\N\setminus\{0\}}  \frac{1}{(1+\lambda_k)^{pr/2}} \right)  \, \Vert f\Vert_{r-1}^p\,\,.
\end{equation}
Since $pr/2>1$ the series in \eqref{eq:Lpsol} is convergent, hence  $u_{\theta}\in {\bar H}^{-r}_h(M)$ is a solution of the equation $(S_{\theta}+ \imath\sigma_\theta) u=f$ which satisfies the required bound \eqref{eq:solboundtwo}.

\end{proof}

\subsection{Twisted invariant distributions and basic currents} 
\label{TIDBC}

In this section we describe the structure of the space of obstructions to the existence of solutions of the twisted
cohomological equation $(S + \imath \sigma)u =f$.

\begin{definition}
\label{def:TID}
For all $r>0$, let  $\Cal I^r_{h,\sigma}  \subset H^{-r}_h(M)$ denote the space of distributions invariant for the twisted Lie derivative operator $S + \imath \sigma $, that is, the space
$$
\Cal I^r_{h,\sigma} :=\{ D \in  H^{-r}_h(M) \vert  (S + \imath \sigma ) D =0  \}\,.
$$
\end{definition}
\noindent  Twisted invariant distributions are in one-to-one correspondence with twisted basic currents.  We introduce Sobolev space of $1$-forms and of $1$-dimensional currents.  
\begin{definition}
For all $r>0$,  the weighted Sobolev space of $1$-forms $W^r_h(M)$ is defined  as follows: 
 \begin{equation}
W^r_h(M):= \{ \alpha \in L^2(M, T^\ast M)   \,\vert \, (\imath_S\alpha, \imath_T\alpha) \in 
H_h^r(M) \times H_h^r(M) \}\,.
\end{equation}
The weighted Sobolev space of $1$-currents $W^{-r}_h(M)$ is defined  as the dual spaces of the  weighted Sobolev space of $1$-forms $W^{r}_h(M)$.
\end{definition}
\noindent  Twisted basic currents are defined as follows:
\begin{definition}
\label{def:TBC} 
For all $r>0$, let  $\Cal B^r_{h,\sigma}\subset H^{-r}_h(M)$ denote the space of twisted basic currents, that is, the space
$$
\Cal B^r_{h,\sigma}:=\{ C \in  W^{-r}_h(M) \vert   (\mathcal L_{S} + \imath \sigma ) C = \imath_{S} C=0  \}\,.
$$
\end{definition} 
\noindent [Here $\mathcal L_{S}$  denotes  the Lie derivative operator on currents in the direction of the vector field $S$ on $M\setminus \Sigma_h$].

\noindent The notions of twisted invariant distributions and twisted basic currents are related (for the untwisted case see  \cite{F02}, Lemmas 6.5 and 6.6,  or \cite{F07}, Lemma 3.14):
\begin{lemma}
\label{lemma:DtoC}
A $1$-dimensional current $C\in {\Cal B}^r_{h, \sigma}$  if and only 
if the distribution $C\wedge \re (h)  \in {\Cal I}^r_{h,\sigma}$. In addition, 
the map
\begin{equation}
\label{eq:DtoC}
\Cal D_h: C  \to - C\wedge\re(h)
\end{equation}
is a bijection from the space ${\Cal B}^r_{h,\sigma}$ onto the space ${\Cal I}^r_{\sigma, h}$.
\end{lemma}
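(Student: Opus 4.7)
The plan is to essentially reduce to the untwisted bijection of \cite{F02} (Lemmas 6.5--6.6) or \cite{F07} (Lemma 3.14), by verifying that the twist $\imath\sigma$ passes cleanly through the wedge product $\wedge \re h$ via the Leibniz rule for currents. The geometric bijection between basic $1$-currents and distributions does not depend on $\sigma$ and is already established in those references; only the intertwining of the twisted operators requires a new (but essentially trivial) check, together with a continuity argument for the weighted Sobolev spaces.

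The key identity is $\Cal L_S(\re h) = 0$, which follows from Cartan's magic formula: $\Cal L_S(\re h) = d(\imath_S \re h) + \imath_S(d \re h)$, where $d(\re h) = \re(dh) = 0$ by holomorphicity of $h$, and $\imath_S(\re h)$ is locally constant (in period coordinates $h = dz$ one has $\re h = dx$ and $S = \partial_x$, so $\imath_S \re h$ is constant). Combining this with the Leibniz rule for the Lie derivative on currents gives
\begin{equation*}
(S + \imath \sigma)(-C \wedge \re h) = -((\Cal L_S + \imath\sigma) C) \wedge \re h - C \wedge \Cal L_S(\re h) = -((\Cal L_S + \imath\sigma) C) \wedge \re h.
\end{equation*}
Hence $(\Cal L_S + \imath\sigma) C = 0$ immediately forces $(S + \imath\sigma)(\Cal D_h(C)) = 0$, showing that $\Cal D_h$ maps $\Cal B^r_{h,\sigma}$ into $\Cal I^r_{h,\sigma}$. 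Continuity of $\Cal D_h: W^{-r}_h(M) \to H^{-r}_h(M)$ is a duality argument: multiplication $f \mapsto f \cdot \re h$ is bounded $H^r_h(M) \to W^r_h(M)$ because $\imath_S(f \re h) = f \cdot \imath_S(\re h)$ and $\imath_T(f \re h) = f \cdot \imath_T(\re h)$ are smooth multiples of $f$, and $\Cal D_h$ is (up to sign) the dual of this map.

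For bijectivity I would exhibit the inverse $\Cal E_h: D \mapsto D \cdot \im h$, defined at the level of the pairing by $\langle \Cal E_h(D), \alpha \rangle := \langle D, \imath_T \alpha \rangle$, which is bounded $H^{-r}_h(M) \to W^{-r}_h(M)$ by the very definition of the weighted Sobolev norm on $W^r_h(M)$. Three verifications remain, each algebraic: (i) $\imath_S(\Cal E_h(D)) = 0$ since $\imath_S(\im h) = 0$; (ii) $\Cal D_h(\Cal E_h(D)) = D$, since $-D \cdot \im h \wedge \re h = D \cdot \re h \wedge \im h = D \cdot \omega_h$, which identifies with $D$; and (iii) $(\Cal L_S + \imath\sigma)(\Cal E_h(D)) = ((S + \imath\sigma) D) \cdot \im h$, by Leibniz plus the identity $\Cal L_S(\im h) = 0$ (proved exactly as for $\re h$), so $\Cal E_h$ sends $\Cal I^r_{h,\sigma}$ into $\Cal B^r_{h,\sigma}$. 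The uniqueness of the decomposition $C = D \cdot \im h$ for a $1$-current with $\imath_S C = 0$ yields both injectivity of $\Cal D_h$ and the relation $\Cal E_h \circ \Cal D_h = \mathrm{Id}$. The only technical bookkeeping — the consistent identification of $2$-currents with distributions via $\omega_h$, and the global well-definedness of the coframe $\{\re h, \im h\}$ on $M \setminus \Sigma_h$ — is already handled in the untwisted references, and no new analytic input is needed beyond the two invariance identities $\Cal L_S(\re h) = \Cal L_S(\im h) = 0$.
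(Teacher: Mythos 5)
Your proof is correct and follows essentially the same route as the paper. You push the twist $\imath\sigma$ through the wedge product via the Leibniz rule together with the invariance identities $\Cal L_S\re h = \Cal L_S\im h = 0$ (which the paper uses implicitly and you make explicit via Cartan's formula), and your inverse $\Cal E_h(D) = D\cdot \im h$ is precisely the paper's $\Cal B_h(D) = \imath_S D$ once a distribution $D$ is identified with the degree-$2$ current $D\,\omega_h$, so there is no new construction here; the only genuine difference is that you derive $\Cal E_h\circ\Cal D_h = \mathrm{Id}$ from the uniqueness of the decomposition of a $1$-current annihilated by $\imath_S$, whereas the paper does a second direct Leibniz computation. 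One small caveat: be careful that the explicit pairing formula you write for $\Cal E_h$, namely $\langle\Cal E_h(D),\alpha\rangle := \langle D,\imath_T\alpha\rangle$, matches the de Rham wedge duality used for $\Cal D_h(C) = -C\wedge\re h$ only up to the choice of duality convention; under the wedge convention the correct formula is $\langle\Cal E_h(D),\alpha\rangle = -\langle D,\imath_S\alpha\rangle$, which is what makes $\imath_S\Cal E_h(D)=0$ and $\Cal D_h\circ\Cal E_h = \mathrm{Id}$ come out. This does not affect the validity of your formal manipulations, which are the ones carrying the argument, but the formula as written should be checked against whichever pairing you fix for $W^{-r}_h(M)$.
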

\begin{proof}
The map $\Cal D_h$ is well-defined since for any $C\in {\Cal B}^r_{h, \sigma}$ we have
$$
(\Cal L_S + \imath \sigma)[ C\wedge \re (h) ]= [(\Cal L_S + \imath \sigma) C] \wedge \re(h) =0\,.
$$
The inverse map is the map $\Cal B_h: {\Cal I}^r_{\sigma, h} \to {\Cal B}^r_{h,\sigma}$ defined as
$$
\Cal B_h (D) =  \imath_S D  \,, \quad \text{ for all }   D \in {\Cal I}^r_{\sigma, h}\,.
$$
[Here $\imath_S$ denotes the contraction operator with respect the vector field $S$ on $M\setminus \Sigma_h$,
which maps distributions, as  degree $2$ currents, to degree $1$ currents].

\noindent The map $\Cal B_h$ is well-defined since $\imath_S \circ \imath_S =0$, and 
$$
(\Cal L_S + \imath \sigma) \circ \imath_S  = \imath_S \circ  (\Cal L_S + \imath \sigma) \,.
$$ 
It follows that if  $D \in {\Cal I}^r_{h, \sigma}$ then $C=\imath_S D \in {\Cal B}^r_{h, \sigma}$ since
$$
(\Cal L_S + \imath \sigma) C =\imath_S \circ  (\Cal L_S + \imath \sigma)D =0 \quad \text{ and } \quad 
\imath_S C= \imath_S (\imath_S D) =0\,.
$$
Finally, the map $\Cal B_h$ is the inverse of the map $\Cal D_h$. In fact, since $\imath_S C=0$ (as $C$ is basic) and $D \wedge \re(h)=0$ (as a current of degree $3$), and $\imath_S \re (h)=1$, we have
$$
\begin{aligned}
(\Cal B_h \circ \Cal D_h) (C)& = -\imath_S(C \wedge \re (h) ) = -\imath_S C \wedge \re (h)  + C \wedge \imath_S \re h = C\,; \\ 
(\Cal D_h \circ \Cal B_h) (D) &= -( \imath_S D \wedge \re(h)) = -\imath_S (D \wedge \re(h)) + (D \wedge\imath_S \re(h))  = D\,.
\end{aligned}
$$
The argument is complete.
\end{proof}
\noindent We introduce the twisted exterior differential $d_{h,\sigma}$, defined on $1$-forms 
as
$$
d_{h,\sigma} \alpha :=d\alpha  + \imath \sigma \re (h) \wedge \alpha, \quad \text{ for all } \alpha \in W^{s}_h(M)\,.
$$
The twisted exterior differential extends to currents by duality. 
\begin{definition}
\label{def:TC} A current $C\in W^{-s}_h(M)$  is $d_{h,\sigma}$-closed if 
$$
d_{h,\sigma} C  = dC +\imath \sigma (\re (h) \wedge C )=0\,.
$$
\end{definition}

\begin{lemma} 
\label{lemma:basic} 
A current $C \in \Cal B_{h, \sigma}^r$ if and only if $\imath_S C=0$ and $C$ is  $d_{h,\sigma}$-closed.
\end{lemma}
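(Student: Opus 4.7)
The plan is to reduce the equivalence to a short computation using Cartan's magic formula, exploiting the fact that on a surface any $2$-current is of top degree and hence determined by a single scalar distribution.

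Since both sides of the equivalence include the condition $\imath_S C = 0$, I would first fix this hypothesis and show that the remaining conditions $(\mathcal L_S + \imath \sigma) C = 0$ and $d_{h,\sigma} C = 0$ are equivalent. Cartan's formula $\mathcal L_S = d\,\imath_S + \imath_S\,d$, which extends by duality from smooth compactly supported forms on $M \setminus \Sigma_h$ to currents in $W^{-r}_h(M)$, gives
\begin{equation*}
\mathcal L_S C \;=\; d(\imath_S C) + \imath_S\, dC \;=\; \imath_S\, dC
\end{equation*}
as soon as $\imath_S C = 0$.

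Next I would compute $\imath_S d_{h,\sigma} C$. Using the graded Leibniz rule for interior multiplication together with the pointwise identities $\imath_S \re(h) = 1$ and $\imath_T \re(h) = 0$ (the same facts already invoked at the end of the proof of Lemma~\ref{lemma:DtoC}), and the standing hypothesis $\imath_S C = 0$, I get
\begin{equation*}
\imath_S\bigl(\re(h) \wedge C\bigr) \;=\; (\imath_S \re(h))\, C - \re(h) \wedge \imath_S C \;=\; C,
\end{equation*}
and therefore
\begin{equation*}
\imath_S\, d_{h,\sigma} C \;=\; \imath_S\, dC + \imath \sigma\, C \;=\; \mathcal L_S C + \imath \sigma\, C \;=\; (\mathcal L_S + \imath \sigma)\, C.
\end{equation*}

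The final step is the dimensional observation that closes both directions at once: $d_{h,\sigma} C$ is a $2$-current on the $2$-dimensional surface $M$, hence is a top-degree current and is completely determined by the scalar distribution obtained by contracting with the frame $\{S, T\}$. Concretely, if $\imath_S(d_{h,\sigma} C) = 0$ as a $1$-current, then $\imath_T \imath_S (d_{h,\sigma} C) = 0$, which forces $d_{h,\sigma} C = 0$. Combined with the identity above, this yields both implications: if $C \in \Cal B^r_{h,\sigma}$, then the right-hand side vanishes, hence $\imath_S d_{h,\sigma} C = 0$ and so $d_{h,\sigma} C = 0$; conversely, if $\imath_S C = 0$ and $d_{h,\sigma} C = 0$, then reading the identity the other way gives $(\mathcal L_S + \imath \sigma) C = 0$, so $C \in \Cal B^r_{h,\sigma}$.

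There is no real obstacle here; the only care needed is to justify that Cartan's formula and the Leibniz identity for $\imath_S$ pass from smooth forms of compact support in $M \setminus \Sigma_h$ to the weighted Sobolev currents in $W^{-r}_h(M)$ by duality, and that the pairings $\imath_S \re(h) = 1$, $\imath_T \re(h) = 0$ hold everywhere on $M \setminus \Sigma_h$, where $\{S,T\}$ is a smooth parallel frame.
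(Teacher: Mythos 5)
Your proposal is correct and follows essentially the same route as the paper: compute $\imath_S d_{h,\sigma} C = (\mathcal L_S + \imath\sigma)C$ under the shared hypothesis $\imath_S C = 0$ (via Cartan's formula and the graded Leibniz rule for $\imath_S$, as you spell out), and then observe that a degree-$2$ current on the surface vanishes once its contraction does. The paper states the computation in one line and the injectivity of contraction on degree-$2$ currents slightly differently (phrased as surjectivity of $\imath_S$ onto $2$-forms), but these are the same argument.
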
 
\begin{proof} If $C\in \Cal B_{h, \sigma} ^r$, then $\imath_SC=0$ by definition, and
$$
\imath_S d_{h,\sigma} C =  \imath_S [dC +\imath \sigma (\re(h) \wedge C )] =  \Cal L_S C +\imath \sigma C =0\,,
$$
so that $\imath_S d_{h,\sigma} C=0$, which implies $d_{h,\sigma} C=0$, as $d_{h,\sigma} C$ is a current of
degree $2$ (and dimension $0$) and the contraction operator $\imath_S$ is surjective onto $2$-forms.

\noindent Conversely, if $\imath_S C=0$ and $d_{h,\sigma} C=0$, then by the above formula 
$\Cal L_S C +\imath \sigma C =0$, hence $C \in \Cal B_{h, \sigma} ^r$, thereby completing the
argument.
\end{proof} 

\noindent By the de Rham theorem for twisted cohomology, it is possible to attach a twisted cohomology class to any  $d_{h,\sigma}$-closed current. 

\begin{definition}  
\label{def:TCohom}
Let $\Omega^*(D)$ denote the space of all smooth differential forms on a domain  $D\subset M$, with compact support on $D$. Let $\eta$ be a real closed smooth $1$-form on $D$ and let $d_\eta$ denote the twisted exterior derivative defined as 
$$
d_\eta \alpha  =  d\alpha  +  \imath \eta \wedge  \alpha \,, \quad \text{ for all } \alpha \in \Omega^*(M) \,.
$$
The twisted cohomology (with complex coefficients) $H^\ast _\eta(D, \C)$ is the cohomology of the differential 
complex $(\Omega^*(D), d_\eta)$. 
\end{definition} 

\noindent For every Abelian differential $h$ on $M$ and $\sigma \in \R$ , let us adopt the notation
$$
H^1_{h,\sigma} (M\setminus\Sigma_h, \C) :=  := H^1_{\sigma\re (h)} (M \setminus \Sigma_h, \C) \,.
$$

\begin{lemma}
\label{lemma:cohom_class} 
For every $r>0$, the exists a cohomology map $ j_r : \Cal B_{h, \sigma}^r \to H^1_{h, \sigma}(M, \C)$
such that $j_r(C)$ is the twisted cohomology class of the twisted basic current $C \in  \Cal B_{h, \sigma}^r$.
\end{lemma}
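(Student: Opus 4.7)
The plan is to define $j_r$ via a twisted version of the de Rham theorem, realizing each $C \in \Cal B_{h,\sigma}^r$ as a cocycle in a twisted current complex whose cohomology agrees with the smooth twisted cohomology $H^1_{h,\sigma}(M \setminus \Sigma_h, \C)$. The starting point is Lemma \ref{lemma:basic}, which ensures that every twisted basic current $C$ is $d_{h,\sigma}$-closed, so it makes sense to try to attach a twisted cohomology class to it.

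First I would establish a local gauge equivalence. On any simply connected open set $U \subset M \setminus \Sigma_h$ the closed real $1$-form $\sigma \re(h)|_U$ is exact, say $\sigma \re(h) = d\phi_U$, and a direct calculation gives $d_{h,\sigma} \alpha = e^{-\imath \phi_U} d(e^{\imath \phi_U}\alpha)$ for every smooth or distributional form $\alpha$ on $U$. Hence the gauge transformation $\alpha \mapsto e^{\imath \phi_U}\alpha$ intertwines $d_{h,\sigma}$ with the ordinary exterior derivative $d$, both on smooth forms and on currents. This identifies the sheaf of germs of $d_{h,\sigma}$-closed functions with a flat complex line bundle $L_{h,\sigma}$ on $M \setminus \Sigma_h$ of monodromy $\gamma \mapsto \exp(\imath \sigma \int_\gamma \re(h))$, and shows that both the sheaf complex of smooth forms and that of currents, each endowed with $d_{h,\sigma}$, are fine resolutions of $L_{h,\sigma}$.

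The standard de Rham argument then applies: the inclusion of smooth forms into currents induces an isomorphism at the level of cohomology, since both complexes compute $H^\ast(M \setminus \Sigma_h, L_{h,\sigma})$. Explicitly, a chain-homotopy identity $R_\epsilon - I = d_{h,\sigma} K_\epsilon + K_\epsilon d_{h,\sigma}$ for a smoothing operator $R_\epsilon$, built by mollification in local gauge-trivializing charts patched by a partition of unity, shows that every $d_{h,\sigma}$-closed current $C$ is $d_{h,\sigma}$-cohomologous within the current complex to the smooth $d_{h,\sigma}$-closed $1$-form $R_\epsilon C$. Then $j_r(C)$ is defined as the common twisted cohomology class of $C$ and $R_\epsilon C$; the class is manifestly independent of $\epsilon$ and of the other smoothing choices, and linearity in $C$ is immediate.

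I expect the main obstacle to be controlling the smoothing step compatibly with the weighted Sobolev scale $W^{-r}_h(M)$ and with the conical singularities at $\Sigma_h$. The gauge-equivalence trick reduces every local question to the ordinary de Rham theorem away from $\Sigma_h$, but one must still verify that the smoothing operator can be built on $M \setminus \Sigma_h$ so as to map $W^{-r}_h(M)$ into smooth $1$-forms, and that the resulting cohomology class actually lies in the finite-dimensional target $H^1_{h,\sigma}(M,\C)$ of the statement (rather than in an a priori larger cohomology of the punctured surface); the latter should follow from the fact that basic currents extend naturally across $\Sigma_h$ thanks to the Sobolev control provided by their defining norms.
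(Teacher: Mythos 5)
Your argument is correct in substance and rests on the same underlying idea the paper uses --- the de Rham theorem for twisted cohomology on the open surface $M\setminus\Sigma_h$ --- but you reprove that theorem from scratch (local gauge trivialization, flat line bundle, mollification chain homotopy), whereas the paper simply invokes it. The two concerns you flag at the end are not genuine obstacles, and the paper's one-line observation dissolves both. Since $C_0^\infty(M\setminus\Sigma_h)\subset W^r_h(M)$ for every $r>0$, any $C\in W^{-r}_h(M)$ with $d_{h,\sigma}C=0$ restricts to an \emph{ordinary} $d_{h,\sigma}$-closed current on $M\setminus\Sigma_h$, so your smoothing operator needs no compatibility whatsoever with the weighted Sobolev scale or the conical singularities: one works in the usual space of currents on the open surface, where standard de Rham regularization applies verbatim after the gauge twist. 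And the worry about landing in a ``smaller'' cohomology of $M$ rather than of the punctured surface is moot: by the convention fixed in the paragraph preceding the lemma (and as written explicitly in the exact sequence of Theorem~\ref{thm:bcstruct}), the symbol $H^1_{h,\sigma}(M,\C)$ is shorthand for $H^1_{h,\sigma}(M\setminus\Sigma_h,\C):=H^1_{\sigma\re(h)}(M\setminus\Sigma_h,\C)$, which is already finite-dimensional since $M\setminus\Sigma_h$ deformation retracts onto a finite graph. No extension of $C$ across $\Sigma_h$ is needed or attempted. With those two points settled, your argument closes; the local identity $d_{h,\sigma}\alpha=e^{-\imath\phi_U}d\bigl(e^{\imath\phi_U}\alpha\bigr)$ and the flat line bundle of monodromy $\gamma\mapsto\exp\bigl(-\imath\sigma\int_\gamma\re(h)\bigr)$ (mind the sign under the convention $d_{h,\sigma}=d+\imath\sigma\,\re(h)\wedge\cdot$) are a correct and useful unpacking of the twisted de Rham theorem the paper merely cites.
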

\begin{proof} A current $C\in W^{-r}_h(M)$ does not in general extend to a linear functional on $C^\infty(M)$,
hence is not a current on the compact surface $M$. However, since $C_0^\infty(M\setminus \Sigma_h) \subset W^r_h(M)$ for all $r>0$, it follows from the de Rham theorem for the twisted cohomology that  
current $C \in W^{-r}_h(M)$ such that $d_{h, \sigma} C=0$ in $W^{-(r+1)}_h(M)$ has a well-defined twisted cohomology class  $[C] \in H^1_{h, \sigma} (M\setminus \Sigma_h, \C)$.  
\end{proof} 

\noindent The structure of the space of basic currents with vanishing cohomology class, with respect to the filtration induced by weighted Sobolev spaces with integer exponent, was described 
in \cite{F02}, \S 7 (see also \cite{F07}, \S 3.3, with respect to the filtration induced by weighted Sobolev spaces with general real exponent).  We extend below these results  to the space of twisted basic currents.

\smallskip
\noindent Let $\delta_r: \Cal B^r_{h,\sigma} \to \Cal B^{r-1}_{h,\sigma}$ be the linear maps defined as follows 
(see \cite{F02}, formula $(7.18')$ and  \cite{F07}, formulas $(3.61)$ and $(3.62)$ for the untwisted case): 
\begin{equation}
\label{eq:deltas}
\delta_{r}(C) :=  (d_{h,\sigma} \circ \imath_T)( C )=  -\,d_{h,\sigma}  \left(\frac{C\wedge \re(h)}{\omega_h} \right)\,, \quad  \text{\text for }\,\,C\in  
\,\Cal B^r_{h, \sigma}\,.
\end{equation}
Indeed, it can be proved by Lemma \ref{lemma:DtoC} and by the definition of the weighted Sobolev spaces $H^r_h(M)$ and $W^r_h(M)$ that the above formula \eqref{eq:deltas} defines, for all $r>0$,
bounded linear maps $\delta_{r}: \Cal B^r_{h,\sigma} \to \Cal B^{r-1}_{h,\sigma}$.

\noindent  Let $\Cal K^{r}_{h, \sigma} \subset \Cal I^{r}_{h, \sigma} \subset  H^{-r}_h(M)$ denote the subspace of distributions which are twisted $S$-invariant and $T$-invariant, that is,
$$
\Cal K^{r}_{h, \sigma}:= \{ D \in H^{-r}_h(M) \vert  (S+\imath \sigma) D = TD =0 \} \,.
$$
Let $i_r: \Cal K^{r}_{h, \sigma} \to \Cal B^r_{h,\sigma}$ denote the restriction   to $\Cal K^{r}_{h, \sigma}$
of the inverse of the map $\Cal D^h:\Cal I^{r}_{h, \sigma}\to  \Cal B^r_{h,\sigma}$ (see Lemma~\ref{lemma:DtoC}) , that is, the map 
defined as 
$$
i_r (D) :=  \imath_S D  \,, \quad \text { for all }  D\in \Cal K^{r}_{h, \sigma} \,. 
$$

\begin{theorem} \label{thm:bcstruct}
For all $r>0$ there exist exact sequences
\begin{equation}
\label{eq:exactsequences}
0\to \Cal K^{r-1}_{h, \sigma} {^{\underrightarrow{\,\,\,\,i_{r-1}\,\,\,\,}} } {\Cal B}_{h, \sigma}^{r-1} {^{\underrightarrow{\,\,\,\,\delta_{r}\,\,\,\,}} }
{\Cal B}_{h,\sigma}^r {^{\underrightarrow{\,\,\,\,j_{r}\,\,\,\,}}}  H^1_{h, \sigma} (M\setminus \Sigma_h,{\C})\,\,.
\end{equation} 
  \end{theorem}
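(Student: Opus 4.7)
The plan is to verify exactness at each of the three nontrivial positions of the sequence: $\Cal K^{r-1}_{h,\sigma}$, $\Cal B^{r-1}_{h,\sigma}$, and $\Cal B^r_{h,\sigma}$. The argument extends to the twisted setting the strategy of the untwisted case in \cite{F02}, \S 7, refined to fractional Sobolev exponents in \cite{F07}, \S 3.3.

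Exactness at $\Cal K^{r-1}_{h,\sigma}$, i.e. injectivity of $i_{r-1}$, is immediate: $i_{r-1}$ is the restriction to $\Cal K^{r-1}_{h,\sigma}$ of the bijection $\Cal B_h : \Cal I^{r-1}_{h,\sigma} \to \Cal B^{r-1}_{h,\sigma}$ of Lemma \ref{lemma:DtoC}.

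Exactness at $\Cal B^{r-1}_{h,\sigma}$ follows from a direct computation. For any $D \in \Cal I^{r-1}_{h,\sigma}$ set $C := \imath_S D \in \Cal B^{r-1}_{h,\sigma}$. By Lemma \ref{lemma:DtoC} one has $C \wedge \re(h) = -D$; combined with the $\{S,T\}$-invariance of $\omega_h$, this yields
\begin{equation*}
\delta_r(C) \;=\; -\,d_{h,\sigma}\!\left(\frac{C \wedge \re(h)}{\omega_h}\right) \;=\; d_{h,\sigma}(D/\omega_h).
\end{equation*}
Expanding $d_{h,\sigma}(D/\omega_h)$ in the cotangent frame dual to $(S,T)$, the hypothesis $(S+\imath\sigma)D = 0$ cancels the component along the $S$-direction, leaving a component proportional to $TD/\omega_h$. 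Hence $\delta_r(C)=0$ iff $TD=0$ iff $D \in \Cal K^{r-1}_{h,\sigma}$, which gives simultaneously $\delta_r \circ i_{r-1}=0$ and $\ker \delta_r \subseteq i_{r-1}(\Cal K^{r-1}_{h,\sigma})$.

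Exactness at $\Cal B^r_{h,\sigma}$ is the main obstacle. The inclusion $\delta_r(\Cal B^{r-1}_{h,\sigma}) \subseteq \ker j_r$ is automatic, since by construction every current in the image of $\delta_r$ is $d_{h,\sigma}$-exact on $M \setminus \Sigma_h$. For the reverse inclusion, let $C \in \Cal B^r_{h,\sigma}$ with $j_r(C)=0$. The twisted de Rham theorem on $M \setminus \Sigma_h$ produces a distributional primitive $G$ with $d_{h,\sigma} G = C$. Applying $\imath_S$ to this identity, using $\imath_S C = 0$ together with the Cartan calculus for $d_{h,\sigma}$ on $0$-currents, yields $(S+\imath\sigma)G = 0$. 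One then sets $C'' := G \cdot \eta$, where $\eta$ is the smooth closed $1$-form in the cotangent frame characterized by $\imath_S \eta = 0$ and $\imath_T \eta = 1$; since $d\eta = 0$, Cartan's formula gives $\Cal L_S \eta = 0$, whence $\imath_S C'' = 0$, $\imath_T C'' = G$, and $(\Cal L_S + \imath\sigma) C'' = [(S+\imath\sigma)G]\,\eta = 0$. Thus $C'' \in \Cal B^{r-1}_{h,\sigma}$ and $\delta_r(C'') = d_{h,\sigma}(\imath_T C'') = d_{h,\sigma} G = C$.

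The substantive difficulty in the last step is the Sobolev regularity of the primitive $G$: the de Rham theorem supplies $G$ merely as a current on $M \setminus \Sigma_h$, whereas the argument requires $C'' \in W^{-(r-1)}_h(M)$, i.e. $G$ of weighted Sobolev order $-(r-1)$ uniformly on $M$, including at the cone points $\Sigma_h$. The requisite regularity is extracted by inverting the twisted Cauchy--Riemann operators $\partial^{\pm}_{h,\sigma}$ of Proposition \ref{prop:CR_twisted} modulo their finite-dimensional kernels $K_{h,\sigma}$ and obstruction spaces $\mathcal M^{\pm}_{\Sigma}(\sigma)$, in the spirit of the Hodge-type construction of \cite{F02}, \S 7, refined to fractional exponents in \cite{F07}, \S 3.3.
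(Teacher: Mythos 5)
Your proposal is correct and takes essentially the same route as the paper: injectivity via the bijection of Lemma~\ref{lemma:DtoC}, the contraction identity $\delta_r(\imath_S D)=-\imath_S(TD)$ for exactness in the middle, and the twisted de Rham primitive ($G\cdot\eta$, the paper's $U\wedge\im(h)$) for exactness at $\Cal B^r_{h,\sigma}$. Your closing remark that the weighted Sobolev regularity of the primitive $G$ (uniformly across the cone points, so that $G\cdot\eta\in W^{-(r-1)}_h(M)$ rather than merely being a current on $M\setminus\Sigma_h$) is the real content of the last step is a genuine point that the paper's proof passes over in silence, and your suggestion of obtaining it by inverting $\partial^\pm_{h,\sigma}$ modulo the finite-dimensional kernels and obstructions is the right mechanism.
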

  \begin{proof} 
  The map $i_r: \Cal K^{r}_{h, \sigma} \to {\Cal B}_{h, \sigma}^{r}$ is by definition injective, since the contraction operator is surjective onto the space of functions ($0$-forms).  
  
 \smallskip 
\noindent  The identity $\text{Im}(i_r)= \text{ker} (\delta_r)$ holds since by Lemma~\ref{lemma:DtoC}  
 $C \in {\Cal B}_{h, \sigma}^r$ if and only if $C= \imath_S D$ with $D\in {\Cal I}_{h, \sigma}^r$
 and in addition
 $$
 \delta_r (\imath_S D)= d_{h,\sigma} (\imath_T \imath_S  D)=  \imath_T (S+ \imath \sigma)D  - \imath_S (T D) 
 = - \imath_S (T D)\,,
 $$
  hence $ \delta_r (\imath_S D)=0$ if and only if $TD=0$ (since $TD$ has degree $2$ and the contraction is
  surjective onto the space of functions ($0$-forms). 
  
  \smallskip 
 \noindent  The identity $\text{Im}(\delta_r)= \text{ker} (j_r)$ holds by the following argument. Let
  $C' \in \Cal B^s_{h,\sigma}$ be a current such that  $[C']=0 \in H^1_{h,\sigma}(M\setminus\Sigma_h, \C)$,  hence there exists a current $U$ of degree $0$  (and dimension $2$) such that $C'= d_{h,\sigma} U$.  Let $C= U \wedge \im(h)$.
  By definition we have $C'= \delta_r (C)$. We claim that $C= U \wedge \im(h) \in \Cal B^{s+1}_{h,\sigma}$.   
  In fact, by definition $\imath_S (U \wedge \im(h))=0$, and since $\imath_S C'=0$, we have
  $$
  \begin{aligned}
  (\Cal L_S + \imath \sigma) (U \wedge \im(h)) &=   (\Cal L_S + \imath \sigma) (U)  \wedge \im(h) 
  \\ &= \imath_S d_{h, \sigma} U   \wedge \im(h)  =  \imath_S C'   \wedge \im(h) =0\,.
\end{aligned}  
$$
The argument is thus complete. 
\end{proof}

\noindent The above theorem and Lemma \ref{lemma:DtoC} imply the following finiteness result:

\begin{corollary}
\label{cor:finiteness}
For any Abelian differential $h$ on $M$, for all $\sigma\in \R$ and  for all $s\geq 0$, the spaces $\Cal I^s_{h, \sigma}$ of twisted invariant distributions for the operator $S+ \imath \sigma \re h$  and the corresponding space 
of $\Cal B^s_{h, \sigma}$ of twisted basic currents have finite dimension.
\end{corollary}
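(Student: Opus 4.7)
The plan is to deduce Corollary~\ref{cor:finiteness} from the exact sequence of Theorem~\ref{thm:bcstruct} by induction on the Sobolev order. First, by Lemma~\ref{lemma:DtoC} the map $\Cal D_h\colon C \mapsto -C\wedge \re(h)$ gives a bijection $\Cal B^s_{h,\sigma} \to \Cal I^s_{h,\sigma}$, so it suffices to prove finite-dimensionality of the spaces $\Cal B^s_{h,\sigma}$ of twisted basic currents for every $s\geq 0$.

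The key external input is that the twisted de Rham cohomology $H^1_{h,\sigma}(M \setminus \Sigma_h, \C)$ is finite-dimensional. This follows from the fact that $M\setminus \Sigma_h$ is a finitely punctured closed surface, hence homotopy equivalent to a one-dimensional finite CW complex, so its cohomology with coefficients in any flat line bundle is finite-dimensional, of dimension at most $2g + |\Sigma_h| - 1$. Granted this, for each positive integer $r$ exactness of~\eqref{eq:exactsequences} at $\Cal B^r_{h,\sigma}$ yields an injection of $\Cal B^r_{h,\sigma}/\delta_r(\Cal B^{r-1}_{h,\sigma})$ into $H^1_{h,\sigma}(M\setminus \Sigma_h, \C)$, while exactness at $\Cal B^{r-1}_{h,\sigma}$ identifies $\delta_r(\Cal B^{r-1}_{h,\sigma})$ with $\Cal B^{r-1}_{h,\sigma}/i_{r-1}(\Cal K^{r-1}_{h,\sigma})$. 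Combining these gives the dimension recursion
$$
\dim \Cal B^r_{h,\sigma} \leq \dim \Cal B^{r-1}_{h,\sigma} + \dim H^1_{h,\sigma}(M\setminus \Sigma_h, \C).
$$
I would iterate this inequality over positive integers $r$ and then invoke the inclusion $\Cal B^s_{h,\sigma} \subset \Cal B^{\lceil s\rceil}_{h,\sigma}$ induced by the nesting of weighted Sobolev spaces to cover arbitrary real $s\geq 0$. This reduces the corollary to finite-dimensionality of the single base case $\Cal B^0_{h,\sigma}$.

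The main obstacle is therefore verifying this base case. An $L^2$ basic current $C$ with $\imath_S C = 0$ has the form $C = b\,\eta_T$ for some $b\in L^2_h(M)$, and the twisted invariance $(\Cal L_S + \imath\sigma)C = 0$ then reduces to the eigenvalue equation $(S+\imath\sigma)b = 0$ in $L^2_h(M)$. Proving that the $L^2$-eigenspace of the essentially skew-adjoint operator $S$ at the eigenvalue $-\imath\sigma$ is finite-dimensional for every $\sigma\in\R$ is a genuine spectral question, which I would tackle by combining the Cauchy-Riemann factorization~\eqref{eq:Stheta_id} with the finite-dimensionality of the deficiency subspaces $\Cal M^\pm_\Sigma(\sigma)$ from Proposition~\ref{prop:CR_twisted}~(ii), each of dimension $d_h$: any such $b$ is forced, via the identities relating $\partial^\pm_{h,\sigma} b$ to $Tb$, to lie in a space built out of finitely many copies of these deficiency subspaces. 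Once this base case is settled, the induction closes and the corollary follows.
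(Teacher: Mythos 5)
Your reduction — pass through $\Cal D_h$ to basic currents, extract the dimension recursion
$\dim\Cal B^r_{h,\sigma} \leq \dim\Cal B^{r-1}_{h,\sigma} + \dim H^1_{h,\sigma}(M\setminus\Sigma_h,\C)$
from the exact sequence~\eqref{eq:exactsequences}, observe that the twisted cohomology of the
punctured surface is finite-dimensional, and climb the Sobolev scale by induction — is exactly the
mechanism the paper has in mind when it asserts that the corollary follows from
Theorem~\ref{thm:bcstruct} and Lemma~\ref{lemma:DtoC}. You are also right that this pushes the
whole weight onto the base case, and the paper is silent about it. A small notational slip: a
$1$-current $C$ with $\imath_S C = 0$ is proportional to the form annihilated by $\imath_S$, i.e.\ to
$\im(h)$ (and not to $\eta_T = \re(h)$), so $C = b\,\im(h)$; the reduction to $(S+\imath\sigma)b=0$
in $L^2_h(M)$ is then correct.

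The genuine gap is in your treatment of that base case, and it is a gap you were right to flag. The
strategy you sketch — forcing an $L^2$ eigenfunction $b$ of $S$ at eigenvalue $-\imath\sigma$ into the
deficiency subspaces $\Cal M^\pm_\Sigma(\sigma)$ via the Cauchy--Riemann factorization — does not
close: the identity $\partial^\pm_{h,\sigma}b = \pm\imath\,Tb$ turns the twisted eigenvalue equation
for $S$ into a relation between $\partial^\pm_{h,\sigma}b$ and $Tb$, but nothing forces $Tb$ to
vanish, so $b$ does not land in $\ker(\partial^\mp_{h,\sigma})^\ast$ or in $K_{h,\sigma}$. The
deficiency spaces control solvability of $\partial^\pm_{h,\sigma}u=f$, not the point spectrum of the
essentially skew-adjoint operator $S$. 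The finiteness of the $L^2$ eigenspace at a fixed
$-\imath\sigma$ is really a dynamical statement: the multiplicity is bounded by the number of ergodic
invariant probability measures of the horizontal translation flow, which is finite ($\leq g$) when the
horizontal foliation is minimal. And this is where one must be careful with the hypotheses: if the
horizontal foliation of $h$ has a cylinder of circumference $L$, then for any $\sigma\in(2\pi/L)\Z$
the functions $e^{-\imath\sigma t}g(y)$ supported on that cylinder give an \emph{infinite}-dimensional
$L^2$ eigenspace, so $\Cal B^0_{h,\sigma}$ is infinite-dimensional. Thus the base case — and hence
the corollary read literally, ``for any Abelian differential $h$ and all $\sigma$'' — requires an
implicit minimality (no horizontal cylinders) or genericity assumption, consistent with the
``minimal vertical foliation'' hypothesis carried by the paper's main theorems. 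You should either
state that hypothesis or restrict $\sigma$ to avoid the countable set of resonant values, and replace
the CR/deficiency-space argument by the multiplicity-of-ergodic-components argument.
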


\noindent We conclude the section by proving a lower bound on the dimensions of the spaces of twisted invariant
distributions.   

\begin{corollary}  Let $h$ be an Abelian differential with minimal vertical foliation. For all $\theta \in \T$, let $h_\theta := e^{-\imath \theta}h$ be the rotated Abelian differential and  let $\sigma_\theta:= \sigma_\theta$.  For any $r>2$ and for almost all $\theta \in \T$,  the subspace
$j_r (\Cal B^r_{h_\theta, \sigma_\theta}) \cap  H^1_{h_\theta,\sigma_\theta}(M, \C)$ has codimension at most equal to one in $H^1_{h_\theta,\sigma_\theta}(M, \C)$ .
\end{corollary}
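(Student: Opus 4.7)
The plan is to reduce the problem of representing a twisted cohomology class by a basic current to the twisted cohomological equation, and then to invoke Theorem~\ref{thm:CEdistribution}.

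Given a smooth closed representative $\alpha$ of a class in $H^1_{h_\theta,\sigma_\theta}(M, \C)$, I first decompose $\alpha = a\,\eta_{S_\theta} + b\,\eta_{T_\theta}$ in the $h_\theta$-adapted coframe. A direct calculation shows that the twisted closedness condition amounts to the single compatibility relation $(S_\theta + \imath\sigma_\theta)\,a = T_\theta\,b$, while any twisted basic current with $\imath_{S_\theta}C = 0$ takes the form $C = g\,\eta_{S_\theta}$ for some twisted invariant distribution $g$.

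The core observation is that, given any distributional solution $u$ of the twisted cohomological equation $(S_\theta + \imath\sigma_\theta)\,u = b$, the distribution $g := a - T_\theta u$ is automatically twisted-invariant, since
$$
(S_\theta + \imath\sigma_\theta)\,g = (S_\theta + \imath\sigma_\theta)\,a - T_\theta\,(S_\theta + \imath\sigma_\theta)\,u = T_\theta\,b - T_\theta\,b = 0\,,
$$
by the compatibility relation and the commutation $[S_\theta + \imath\sigma_\theta,\,T_\theta] = 0$. The current $C := g\,\eta_{S_\theta}$ is then a twisted basic current, and the identity $\alpha - C = d_{h_\theta,\sigma_\theta}u$ gives $j_r(C) = [\alpha]$ in twisted cohomology.

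Theorem~\ref{thm:CEdistribution} supplies such a distributional solution $u$, for Lebesgue almost every $\theta$, as soon as the datum $b$ has zero average against $\omega_{h_\theta}$. This zero-average condition descends to at most a single linear functional on $H^1_{h_\theta,\sigma_\theta}(M,\C)$, whose vanishing cuts out a subspace of codimension at most one, and this yields the claimed bound on $j_r(\Cal B^r_{h_\theta,\sigma_\theta}) \cap H^1_{h_\theta,\sigma_\theta}(M,\C)$. (In the generic case $\sigma_\theta = \sigma\cos\theta \neq 0$, one may in fact absorb this condition entirely by replacing $\alpha$ with $\alpha + d_{h_\theta,\sigma_\theta}\,c$ for a suitably chosen constant $c$, giving codimension zero.) The main technical point will be to track the Sobolev regularity: since Theorem~\ref{thm:CEdistribution} produces $u \in \bar H^{-r'}_h(M)$ and $T_\theta u$ incurs an additional derivative loss, one must apply the theorem at an exponent slightly below $r$ so that the resulting current $C$ indeed lies in $W^{-r}_h(M)$ and hence in $\Cal B^r_{h_\theta,\sigma_\theta}$.
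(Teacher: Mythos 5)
Your proof is correct and follows essentially the same approach as the paper's: decompose the twisted-closed $1$-form in the $h_\theta$-adapted coframe, solve the twisted cohomological equation for the coefficient dual to $S_\theta$ via Theorem~\ref{thm:CEdistribution}, and read off the basic current from the remaining component via $\alpha - C = d_{h_\theta,\sigma_\theta}u$. You are if anything more explicit than the paper about the codimension-one bound (coming from the zero-average constraint, which you correctly note can be absorbed when $\sigma_\theta \neq 0$) and about the Sobolev bookkeeping needed to place $C$ in $W^{-r}_h(M)$, a point the paper's own proof passes over.
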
 
\begin{proof} Let $h_\theta := e^{-\imath \theta}h$ be the rotated Abelian differential, let $\sigma_\theta:= \sigma_\theta$  and let $\alpha$ be any twisted closed $1$-form, that is, a $1$-form such that
$$d_{h_\theta, \sigma_\theta} \alpha:=d\alpha  + \imath \sigma_\theta \re(h_\theta)\wedge  \alpha =0 \,.$$ 
Let $\alpha := f \re(h_\theta) +  g \im(h_\theta)$ and assume that $f\in \bar H^{r-1}_h(M)$ 
with $r>2$ and that $f$ is orthogonal to constant functions. Then by Theorem~\ref{thm:CEdistribution}   it follows that the cohomological
equation
$$
(S_\theta + \imath \sigma_\theta) u = f
$$
has a distributional solution $u\in \bar H^{-r}_h(M)$ for almost all $\theta \in \T$. Let $C$ denote the 
current of degree $1$ (and dimension $1$) uniquely determined by the formula
$$
d_{h_\theta, \sigma_\theta} u:= \left(d + \imath \sigma_\theta \re (h_\theta)\right) u= \alpha +   C \,. 
$$
It is clear from the definition that $C$ is closed with respect to the twisted differential
$d_{h_\theta, \sigma_\theta}$, that is,  $d_{h_\theta, \sigma_\theta} C=0$, and in addition $\imath_{S_\theta} C=0$, hence, by Lemma~\ref{lemma:basic}, the current $C$ is a twisted basic current $d_{h_\theta, \sigma_\theta}$-cohomologous to the $1$-form $\alpha$. 

\noindent  Finally, it can be proved that for all $\sigma\in \R$ all cohomology classes in $H^1_{h,\sigma}(M, \C)$ can be represented by  $d_{h, \sigma}$-closed $1$-forms $\alpha \in W^r_h(M)$ with $r>1$.

\end{proof}

\subsection{Smooth solutions} 
\label{SS}
In this section we prove our main result on existence of smooth solutions of the twisted cohomological equation for translation flows, which holds for {\it any} Abelian differential in almost all directions, and derive as a corollary
our result on cohomological equations for product translation flows.

\begin{lemma}
\label{lemma:aprioribound_smooth}
Let $h$ be an Abelian differential with minimal vertical foliation.   For every  $s>r \geq 0$ such that
$s-r >3$ there exists $p\in (0,1)$ such that  for every $\sigma \in \R$ there exists a function  
$A_{h, \sigma}:= A_{h,\sigma} (p,r,s)\in L^p(\T, {\mathcal L})$ such that the following holds. 
For almost all $\theta\in \T$ and for all zero average functions  $v\in  H^{s+1}_h(M)$,  we have
\begin{equation}
\label{eq:aproribound_smooth}
\vert   v  \vert_r  \leq A_{h,\sigma} (\theta) \,  \vert (S_\theta + \imath \sigma_\theta) v \vert_{s}\,,
\end{equation}
and there exists a constant $B_h:= B_h (p,r,s)>0$ such that, for all $\sigma \in \R$, we have
$$
\vert  A_{h,\sigma}   \vert_p   \leq  B_h\,.
$$
\end{lemma}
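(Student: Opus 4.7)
The plan is to deduce the apriori estimate $\vert v\vert_r\le A_{h,\sigma}(\theta)\,\vert(S_\theta+\imath\sigma_\theta)v\vert_s$ from Lemma~\ref{lemma:aprioribound} by duality, expanding the zero-average function $v\in H^{s+1}_h(M)$ in the orthonormal basis $\{e_k\}$ of Dirichlet eigenfunctions (Theorem~\ref{thm:Dirichlet}), mimicking the summation technique used to pass from Lemma~\ref{lemma:aprioribound} to Theorem~\ref{thm:CEdistribution} but transposed to the smooth side.

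First, by the norm equivalences of Theorem~\ref{lemma:equivnorms}, it is enough to estimate a Friedrichs norm $\Vert v\Vert_{r'}$ for some $r<r'$ with $s-r'>3$ in terms of $\Vert(S_\theta+\imath\sigma_\theta)v\Vert_{s'}$ for some $r'<s'<s$; the comparison constants from Theorem~\ref{lemma:equivnorms} will be absorbed into the final $B_h$. Since $v$ is zero-average, expansion in the orthonormal basis gives $\Vert v\Vert_{r'}^2=\sum_{k\ge1}(1+\lambda_k)^{r'}\,|\langle v,e_k\rangle_h|^2$. Then, for each $k$, Lemma~\ref{lemma:aprioribound} applied with parameter $s'$ (so that $s'>2$ and $ps'>2$) to the test function $f=e_k\in\Cal H^{-1}_h(M)$ yields a measurable function $A_k\in L^p(\T,\Cal L)$ with
\[
|\langle v,e_k\rangle_h|\le A_k(\theta)\,\Vert(S_\theta+\imath\sigma_\theta)v\Vert_{s'},\qquad \Vert A_k\Vert_p\le B_h\,\Vert e_k\Vert_{-1}=B_h\,(1+\lambda_k)^{-1/2},
\]
with $B_h$ uniform in $\sigma$ thanks to the $\sigma$-uniform constants appearing in Lemma~\ref{lemma:form_comparison} and Proposition~\ref{prop:CR_twisted}. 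Cauchy--Schwarz in the $k$-summation then gives
\[
\Vert v\Vert_{r'}^{2}\le\Vert(S_\theta+\imath\sigma_\theta)v\Vert_{s'}^{2}\cdot\Cal A(\theta)^{2},\qquad \Cal A(\theta)^{2}:=\sum_{k\ge 1}(1+\lambda_k)^{r'}\,A_k(\theta)^{2}.
\]

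Second, the crucial step is to prove $\Cal A\in L^p(\T)$ with $\sigma$-uniform $L^p$ bound. Unwinding the proof of Lemma~\ref{lemma:aprioribound} one identifies
\[
A_k(\theta)^2=\sum_{j\ge 0}\,\frac{|N^\pm_{h,\sigma,k,j}(\pi\mp 2\theta)|^2}{(1+\lambda_j)^{s'}},
\]
where $N^\pm_{h,\sigma,k,j}$ is the non-tangential maximal function of the resolvent matrix element $\langle\Cal R^\pm_{J,\sigma}(z)\,e_j,F_k^\pm\rangle_h$, with $\partial^\pm_{h,\sigma}F_k^\pm=e_k$ and $\Vert F_k^\pm\Vert_0\le C_h(1+\lambda_k)^{-1/2}$ (Lemma~\ref{lemma:form_comparison}). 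Corollary~\ref{cor:resolvent} yields $\Vert N^\pm_{h,\sigma,k,j}\Vert_p\le A_{\alpha,p}\Vert e_j\Vert_0\Vert F_k^\pm\Vert_0\le A_{\alpha,p}C_h(1+\lambda_k)^{-1/2}$, uniformly in $\sigma$. Inserting these bounds, applying the triangular inequality for $L^{p/2}$ (with $p/2<1$), and using the Weyl asymptotics of Theorem~\ref{thm:Weyl} ($\lambda_k\sim c_h k$), one arrives at a double series whose convergence imposes the loss $s-r>3$ for a suitable $p\in(0,1)$ (with $ps>2$). Passing back from Friedrichs to non-Friedrichs norms by Theorem~\ref{lemma:equivnorms}, one reads off the stated estimate, with $A_{h,\sigma}(\theta)\sim\Cal A(\theta)$.

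\emph{Main obstacle.} The delicate point is the fourth step: the straightforward estimate $\Vert N^\pm_{h,\sigma,k,j}\Vert_p\le A_{\alpha,p}\Vert F_k^\pm\Vert_0$ alone, combined with the weight $(1+\lambda_k)^{r'}$ coming from the $k$-summation in $\Cal A^2$, yields a series whose convergence is tight at $s-r=3$. One must exploit the symmetric bound obtained by swapping the two slots in Corollary~\ref{cor:resolvent} (equivalently by passing to the adjoint resolvent $\Cal R^\mp_{J,\sigma}(\bar z)$) together with $\Vert F_j^\pm\Vert_0\le C_h(1+\lambda_j)^{-1/2}$, so that in effect $\Vert N^\pm_{h,\sigma,k,j}\Vert_p\le C_h\min\{(1+\lambda_j)^{-1/2},(1+\lambda_k)^{-1/2}\}$. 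Splitting the double series into the regions $\lambda_j\ge\lambda_k$ and $\lambda_j<\lambda_k$, each sub-series is convergent by Weyl precisely when $p(s+1-r')>4$, which at $p$ close enough to $1$ reduces to $s-r>3$. The remainder of the argument is a matter of collecting constants and using uniformity of the $\sigma$-dependence throughout.
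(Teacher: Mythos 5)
Your plan hits a genuine obstruction at the summation step, and the patch you propose does not close it.

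Concretely: you want to estimate $\Vert v\Vert_{r'}^2=\sum_{k\geq1}(1+\lambda_k)^{r'}|\langle v,e_k\rangle_h|^2$ for some $r'>r\geq 0$ using the bounds $\vert A_k\vert_p\leq B_h(1+\lambda_k)^{-1/2}$ from Lemma~\ref{lemma:aprioribound}. Even at the endpoint $r'=0$, after the $L^{p/2}$ triangular inequality the $k$-summation produces $\sum_k \vert A_k\vert_p^p \lesssim \sum_k(1+\lambda_k)^{-p/2}$, which diverges for every $p\in(0,1)$ by Weyl asymptotics. No choice of $s'$ and no redistribution of weights onto the $j$-index fixes this, because the $k$-decay you actually have is only $(1+\lambda_k)^{-1/2}$ and the exponent $p<1$ makes the critical threshold unreachable. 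Your proposed remedy --- the ``symmetric'' bound $\Vert N^\pm_{h,\sigma,k,j}\Vert_p\leq C_h\min\{(1+\lambda_j)^{-1/2},(1+\lambda_k)^{-1/2}\}$ --- is not a consequence of Corollary~\ref{cor:resolvent}. The matrix element is $\langle\mathcal R^\pm_{J,\sigma}(z)\,e_j, F_k^\pm\rangle_h$, and the general estimate bounds the $L^p$ norm of its non-tangential maximal function by $A_{\alpha,p}\Vert e_j\Vert_0\Vert F_k^\pm\Vert_0=A_{\alpha,p}\Vert F_k^\pm\Vert_0\lesssim (1+\lambda_k)^{-1/2}$; passing to the adjoint resolvent $\mathcal R^\mp_{J,\sigma}(\bar z)$ merely shuffles the resolvent to the other slot and leaves the same two vectors, hence the same total spectral mass, so no factor $(1+\lambda_j)^{-1/2}$ appears. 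And even if one granted that bound, the region $\lambda_j<\lambda_k$ of the double series still produces $\sum_k(1+\lambda_k)^{(r'-1)p/2}$ (times a bounded or mildly growing inner $j$-sum), which converges only for $r'<1-2/p<-1$, again impossible when $r'\geq 0$.

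What makes the lemma work in the paper is an elliptic bootstrap, which your argument omits entirely. One first sums with a \emph{negative} weight $(1+\lambda_k)^{-\beta}$, $\beta>1$, obtaining the estimate for the \emph{negative}-order Friedrichs norm
\[
\Vert v\Vert_{-\beta}\leq A_{h,\sigma}(\theta)\,\Vert(S_\theta+\imath\sigma_\theta)v\Vert_\alpha,\qquad \vert A_{h,\sigma}\vert_p\leq B_h,
\]
which is where the $L^p$ bound $\vert A_k\vert_p\lesssim(1+\lambda_k)^{-1/2}$ together with $\beta>1$ gives a convergent series $\sum_k(1+\lambda_k)^{-(\beta+1)p/2}$ for $p$ close to $1$. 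Then one applies this estimate to $(I-\Delta^F_h)^m v$ instead of $v$: since $(S_\theta+\imath\sigma_\theta)$ commutes with the Laplacian and $(I-\Delta^F_h)$ acts on the Fourier basis by multiplication by $(1+\lambda_k)$, one gets
\[
\Vert v\Vert_{2m-\beta}\leq A_{h,\sigma}(\theta)\,\Vert(S_\theta+\imath\sigma_\theta)v\Vert_{\alpha+2m},
\]
and interpolation in the Friedrichs scale yields $\Vert v\Vert_\rho\leq A_{h,\sigma}(\theta)\Vert(S_\theta+\imath\sigma_\theta)v\Vert_{\alpha+\beta+\rho}$ for $\rho\in[0,1)$. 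Finally, applying this with $\rho=\{r\}$ to the functions $S^iT^jv$, $i+j\leq[r]$, and invoking Lemma~\ref{lemma:comparison} and Theorem~\ref{lemma:equivnorms} to pass between the Friedrichs and weighted Sobolev norms, produces $\vert v\vert_r\lesssim A_{h,\sigma}(\theta)\vert(S_\theta+\imath\sigma_\theta)v\vert_{\alpha+\beta+r}$, with $\alpha>2$, $\beta>1$, $\alpha p>2$, $(\beta+1)p>2$. This is how the threshold $s-r>3$ arises, and the ``loss of two derivatives per bootstrap step'' (paid for by $(I-\Delta^F_h)$) is what lets the exponent on the left climb from $-\beta<-1$ to $r\geq 0$. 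Your proof, as written, has no mechanism for that climb.
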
 
\begin{proof}
Let $\Cal E := (e_k )_{k\in \N}$ denote the orthonormal Fourier basis of the space $L^2_h(M)$ of eigenvalues
of the Friedrichs extension of the flat Laplacian, described in \S  \ref{sec:analysis}.  Let $\alpha>2$ and let $p\in (0,1)$ be such that $\alpha p>2$. By Lemma~\ref{lemma:aprioribound}, for all $k\in \N\setminus \{0\}$ there exists 
a function $A^{(k)}_{h,\sigma}:= A^{(k)}_{h,\sigma}(p,\alpha) \in L^p(\T, {\mathcal L})$  such that, for all $v\in H^{\alpha+1}_h(M)$ of zero average we have
$$
\vert \<v,e_k\>\vert \leq A^{(k)}_{h,\sigma}(\theta)  \Vert   (S_\theta + \imath \sigma_\theta)v \Vert_\alpha \,.
$$
In addition, there exists a constant $B_h:= B_h(p,\alpha)$ such that
$$
\vert A^{(k)}_{h,\sigma} \vert_p \leq B_h \Vert  e_k \Vert_{-1} = B_h  (1+\lambda_k)^{-1/2} \,.
$$
Let $\beta>1$ such that $(\beta+1)p>2$. It follows that, for any $v \in H^{\alpha+1}_h(M)$ of zero average we have
$$
\Vert  v \Vert_{-\beta} \leq   \Bigl (\sum_{k\in \N\setminus \{0\} } (1+\lambda_k)^{-\beta} A^{(k)}_{h,\sigma}(\theta)^2  \Bigr )^{1/2}  \Vert   (S_\theta + \imath \sigma_\theta)v \Vert_\alpha \,.
$$
 Let then $A_{h,\sigma}:= A_{h, \sigma}(p,\alpha, \beta)$ denote the function defined, for $\theta\in \T$, as follows:
$$
 A_{h, \sigma}(\theta) := \Bigl (\sum_{k\in \N\setminus \{0\} } (1+\lambda_k)^{-\beta} A^{(k)}_{h,\sigma}(\theta)^2  \Bigr )^{1/2}  \,.
$$
By the triangular inequality for the space $L^{p/2}$  (with $p/2<1$) we have
$$
\begin{aligned}
\vert  A_{h,\sigma} \vert_p^p&\,= \,\left\vert \sum_{k\in \N\setminus \{0\} } (1+\lambda_k)^{-\beta} 
(A^{(k)}_{h,\sigma})^2 \right\vert_{p/2}^{p/2}  \\ &\leq \,  \sum_{k\in \N\setminus \{0\} }  (1+\lambda_k)^{-\beta p/2} \vert A^{(k)}_{h,\sigma} \vert_p^{p}   \,\leq \, B_h^p  \sum_{k\in \N\setminus \{0\} } 
 (1+\lambda_k)^{-(\beta+1)p/2} \,.
\end{aligned}
$$
By the Weyl asymptotics, the series on the RHS of the above formula is convergent as soon as
$(\beta+1)p > 2$.  Let then $v\in H^{\alpha+3}_h(M)$ so that $\Delta_h v \in H^\alpha _h(M)$ and we have 
$$
\begin{aligned}
\Vert  v \Vert_{-\beta+2} &= \Vert  (I-\Delta^F_h) v \Vert_{-\tau} \leq A_{h,\sigma}(\theta)  
\Vert   (S_\theta + \imath \sigma_\theta) (I-\Delta^F_h) v \Vert_\alpha   \\ &
=  A_{h,\sigma}(\theta)
\Vert (I-\Delta_h)  (S_\theta + \imath \sigma_\theta)  v \Vert_\alpha=A_{h,\sigma}(\theta) \Vert (S_\theta + \imath \sigma_\theta)  v \Vert_{\alpha+2} \,.
\end{aligned} 
$$
By the interpolation inequality for the Friedrichs norms and by Lemma \ref{lemma:comparison},  
for every $ \rho \in [0, 1)$,  whenever $\alpha p>2$, $(\beta+1)p >2$ and $\rho + \beta \leq 2$, 
we have
$$
\vert v \vert_{\rho} = \Vert  v \Vert_\rho\leq  A_{h, \sigma}(\theta) \Vert (S_\theta + \imath \sigma_\theta)  v 
\Vert_{\alpha+\beta+\rho} \leq A_{h, \sigma}(\theta) \vert (S_\theta + \imath \sigma_\theta)  v \vert_{\alpha+\beta+\rho} \,.
$$
Finally, for all $r\geq 0$, by applying the above bound to all functions $S^i T^j v$ and $T^iS^j$, 
for all $i,j \leq [r]$, we finally have that there exists a constant $C_r>0$ such that 
\begin{equation}
\label{eq:aprioribound_step}
\vert  v \vert_r \leq  C_r A_{h, \sigma} (\theta) \vert (S_\theta + \imath \sigma_\theta)  v \vert_{\alpha +\beta +r} \,.
\end{equation}
Since, given $s>r\geq 0$ with $s-r>3$ it is always possible to find $\alpha>2$, $\beta >1$ and
$p\in (0, 1)$ such that
$$
s= \alpha+ \beta, \quad \alpha p >2\,, \quad (\beta+1)p >1 \quad \text{ and } \quad \{r\} + \beta \leq 2\,,
$$
the bound in formula~\eqref{eq:aproribound_smooth} follows immediately from that in the above formula
\eqref{eq:aprioribound_step}, hence the argument is complete.

\end{proof}

\begin{theorem} 
\label{thm:GCEsmooth}
Let $h$ be an Abelian differential with minimal vertical foliation.  For any $s>r\geq 0$ such that $s-r>3$,
there exists $p\in (0, 1)$  and a constant $C_{r,s}>0$ such that the following holds.  For any  $\sigma \in \R$ and for almost all $\theta\in \T$,  for any $f\in H^{s}_h(M)$ of zero average such that  $\Cal D(f)=0$ for all 
twisted invariant distributions $\Cal D \in \Cal I^s_{h_\theta, \sigma_\theta}$, the cohomological equation 
$(S_{\theta} +\imath \sigma_\theta) u=f$ has a  zero-average solution $\Cal U_\theta(f) \in H^{r}_h(M)$ satisfying the following estimate:
\begin{equation}
\label{eq:GCEest1}
\left(\int_\T \vert \Cal U_\theta (f) \vert_{r}^p \, d\theta \right)^{1/p} \leq C_{r,s} \, \vert f \vert_{s}\,\,.
\end{equation}
\end{theorem}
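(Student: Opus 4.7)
The plan is to combine the a priori bound of Lemma~\ref{lemma:aprioribound_smooth} with a duality argument exploiting the finiteness of the space of twisted invariant distributions from Corollary~\ref{cor:finiteness}. Throughout, fix $\sigma\in\R$, $s>r\geq 0$ with $s-r>3$, the exponent $p\in(0,1)$ provided by Lemma~\ref{lemma:aprioribound_smooth}, and write $T_\theta:=S_\theta+\imath\sigma_\theta$; all function spaces below are implicitly restricted to their zero-average subspaces.

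First, I would read Lemma~\ref{lemma:aprioribound_smooth} as the statement that, for Lebesgue almost every $\theta\in\T$, the operator $T_\theta$ acting on zero-average functions $v\in H^{s+1}_h(M)$ is injective with values in $H^s_h(M)$, and that its inverse extends by continuity to a bounded linear map from the closure $\overline{R_\theta}$ of its range in $H^s_h(M)$ into $H^r_h(M)$, with operator norm at most $A_{h,\sigma}(\theta)$.

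The heart of the argument, and the main obstacle, is to show that any $f$ satisfying the hypotheses of the theorem in fact belongs to $\overline{R_\theta}$. By Hahn--Banach this amounts to verifying that every functional $\Cal D\in H^{-s}_h(M)$ annihilating $R_\theta$ also annihilates $f$. By the anti-symmetry of $T_\theta$ on its common domain (Section~\ref{sec:Beurling}), such a $\Cal D$ satisfies $\<T_\theta \Cal D,v\>=0$ for every zero-average $v\in H^{s+1}_h(M)$, so $T_\theta \Cal D\in H^{-s-1}_h(M)$ must be a constant distribution; since for almost every $\theta$ one has $\sigma_\theta\neq 0$, the equation $T_\theta\phi=\text{const}$ is solved by a constant-valued function $\phi$, and absorbing this contribution expresses $\Cal D$ as an element of $\Cal I^s_{h_\theta,\sigma_\theta}$ plus a constant functional. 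Because $f$ has zero average and annihilates $\Cal I^s_{h_\theta,\sigma_\theta}$ by assumption, we conclude $\Cal D(f)=0$; Corollary~\ref{cor:finiteness} guarantees that the relevant obstruction space is finite-dimensional so that Hahn--Banach applies cleanly.

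With $f\in\overline{R_\theta}$ established, I would define $\Cal U_\theta(f)$ by approximating $f$ in $H^s_h(M)$ by a sequence $T_\theta v_n$ with zero-average $v_n\in H^{s+1}_h(M)$; Lemma~\ref{lemma:aprioribound_smooth} forces $(v_n)$ to be Cauchy in $H^r_h(M)$, so it converges to $\Cal U_\theta(f)\in H^r_h(M)$ satisfying
\[
\vert \Cal U_\theta(f)\vert_r \,\leq\, A_{h,\sigma}(\theta)\, \vert f \vert_s\,.
\]
Continuity of $T_\theta:H^r_h(M)\to H^{r-1}_h(M)$ then yields $T_\theta \Cal U_\theta(f)=f$ in $H^{r-1}_h(M)$ (hence in $H^s_h(M)$). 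Raising the displayed inequality to the $p$-th power, integrating over $\T$, and invoking the $L^p(\T)$ bound $\vert A_{h,\sigma}\vert_p\leq B_h$ from Lemma~\ref{lemma:aprioribound_smooth} produces the asserted estimate~\eqref{eq:GCEest1} with $C_{r,s}:=B_h(p,r,s)$.
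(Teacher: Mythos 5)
Your proof is correct and follows the same strategy as the paper's: use the a priori bound of Lemma~\ref{lemma:aprioribound_smooth} to show that the zero-average range of $S_\theta+\imath\sigma_\theta$ is closed and, by duality, equals the common kernel of the twisted invariant distributions, then pass to the $L^p(\T)$ estimate on $A_{h,\sigma}$. One minor remark: the appeal to Corollary~\ref{cor:finiteness} is not needed for the Hahn--Banach step, and for $\sigma=0$ (so $\sigma_\theta\equiv 0$ for every $\theta$) the ``constant-absorption'' reasoning should be replaced by the direct observation that $(S_\theta+\imath\sigma_\theta)\Cal D$ evaluated on the constant function $1$ already vanishes, forcing the constant multiple of the integration functional to be zero.
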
 
\begin{proof} It follows from the a priori bound of Lemma~\ref{lemma:aprioribound_smooth} that, for all 
$\sigma \in \R$ and for almost all $\theta\in \T$, the subspace 
$$
\{  f\in \Cal H^s_h(M) \vert  f \in   (S_\theta + \imath \sigma_\theta) [ {\Cal H}^r_h(M)] \} 
$$
is closed in $\mathcal H^s_h(M)$, hence it coincides with the kernel  of the subspace 
$\mathcal I^{-s}_{h_\theta, \sigma_\theta} \cap \Cal H^{-s}(M)$ of all twisted invariant distributions
vanishing on constant functions.  In addition, it follows by continuity that, there exists $p\in (0, 1)$ and
 a function $A_{h,\sigma} \in L^p(\T,{\Cal L})$ such that,  for all $f\in \Cal H^s_h(M) \cap
 \text{Ker} ( \mathcal I^{-s}_{h_\theta, \sigma_\theta} )$ the unique zero-average 
 solution $\Cal U_\theta(f) \in H^r_h(M)$ of the cohomological equation  $(S_\theta +  \imath \sigma_\theta) u =f$ satisfies the bound
 $$
 \vert  \Cal U_\theta(f) \vert_r \leq    A_{h, \sigma} (\theta) \vert  f \vert_s\,.
 $$
 From the above inequality and the bounds on the  $L^p$ norm of the function $A_{h, \sigma}$ established
 in Lemma~\ref{lemma:aprioribound_smooth}, it follows immediately that
 $$
\left( \int_\T  \vert  \Cal U_\theta(f) \vert_r^p d\theta \right)^{1/p}  \leq \vert A_{h, \sigma}\vert_p \, \vert  f \vert_s
\leq  B_h\, \vert  f \vert_s\,.
 $$
 The proof of the theorem is therefore complete. 
 
\end{proof}

\begin{proof} [Proof of Theorem~\ref{thm:main}]   The condition that $h$ has a minimal vertical foliation it is not restrictive since the statement is rotation invariant, and any Abelian differential has a minimal direction \cite{Ma}, \cite{AG}. 

\smallskip
\noindent If the function $f \in H^s_h(M)$ is constant, then for $\sigma \not =0$ the constant functions 
$u=-\imath f/\sigma$ is a solution (which is unique in $L^2_h(M)$ for almost all $\theta\in \T$. For $\sigma=0$, there is no solution unless $f=0$, in which case the solution is the zero constant. The argument is therefore reduced to the case of functions of zero average.

\smallskip
\noindent By Theorem~\ref{thm:GCEsmooth}, for any Abelian differential  $h$ with minimal vertical foliation,  the twisted cohomological equation $(S_\theta + \imath \sigma \cos\theta) u=f$ can be solved with Sobolev bounds for all $f \in H^s_h(M)$ of zero-average  in the kernel of all twisted invariant distributions, {\it  for all $\sigma \in \R$ and for almost all $\theta\in \T$}.    Let then $\mathcal F \subset \T\times \R$ be the set of $(\theta, \sigma) \in \T \times \R$ such that  the twisted cohomological equation $(S_\theta + \imath \sigma ) u=f$ can be solved with Sobolev bounds for all $f \in H^s_h(M)$ of zero average  in the kernel of all twisted invariant distributions. Since the map 
$(\theta, \sigma) \to (\theta, \sigma \cos \theta)$  from $\T \times \R$ into itself is absolutely continuous,  it follows from Theorem~\ref{thm:GCEsmooth} that the set $\mathcal F$ has full Lebesgue measure. Finally, the statement of the theorem follows by Fubini's theorem.

\end{proof}

\noindent For all $s, \nu \in \N$, let $H^{s, \nu}_h (M\times \T)$ denote the $L^2$ Sobolev space  on $M\times \T$ with respect to the invariant volume form $\omega_h \wedge d\phi$ and the vector fields $S$, $T$, and $\partial/\partial \phi$:  
 $$
 H^{s,\nu} _h(M\times\T):= \{ f \in L^2(M\times \T, d \text{vol}) \vert   \sum_{i+j\leq s}\sum_{\ell \leq \nu} \Vert S^i T^j \frac{\partial^\ell f}{\partial \phi^\ell} \Vert_0 < +\infty\}\,;
 $$
the space  $H^{-s, -\nu}_h(M\times\T)$ is defined as the dual space of $H^{s,\nu}_h(M\times\T)$.
 
\noindent Let now $V_{\theta, c}= S_\theta +  c\cos\theta \frac{\partial}{\partial \phi}$ denote a translation vector field on $M \times \T$, and let $\Cal I^{s,\nu}_{h_\theta, c}$ denote the space of $V_{\theta, c}$ 
invariant distributions.

\noindent The space $L^2(M\times \T, d \text{vol})$ of the product manifold with respect to
the invariant volume form $\omega_h \wedge d\phi$ decomposes as a direct sum of the eigenspaces
$\{H^0_n \vert n\in \Z\}$ of the circle action:
$$
L^2(M\times \T, d \text{vol}) = \bigoplus_{n\in \Z}  H^0_n \,.
$$
\begin{corollary}
\label{cor:3dim}
Let $h$ be an Abelian differential with minimal vertical foliation. Let $s>r \geq 0$ be such that $s-r>3$ and let 
$\nu>2$ and $\mu <\nu-2$. For all $c\in \R$ 
and for almost all $\theta \in \T$ there exists a constant  $C^{(\mu,\nu)}_{r,s} (\theta,c)>0$ such that 
the following holds. For any  $f \in  H^{s,\nu}(M\times \T)$ such that $D(f) =0$ for all $V_{\theta,c}$-invariant
distributions  $D \in \Cal I^{s,\nu}_{h_\theta,c} \subset H^{-s,-\nu}_h(M\times \T)$, the cohomological equation $V_{\theta,c} u=f$ has a  solution $u:=\Cal U_\theta(f) \in H^{r, \mu}_h(M)$ satisfying the following estimate:
\begin{equation}
\vert \Cal U_\theta (f) \vert_{r,\mu}\leq C^{(\mu,\nu)}_{r,s}(\theta,c)\, \vert f \vert_{s,\nu}\,\,.
\end{equation}
\end{corollary}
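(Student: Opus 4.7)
My plan is to Fourier-expand in the circle variable $\phi$ and reduce the equation on $M \times \T$ to an infinite family of twisted cohomological equations on $M$, one per Fourier mode, each of which is governed by Lemma~\ref{lemma:aprioribound_smooth}. The role of the extra regularity $\nu > \mu + 2$ in the $\phi$ direction will be to absorb a polynomial growth in the Fourier index of the mode-wise solution constants.

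First I would write $f = \sum_{n \in \Z} f_n(x) e^{in\phi}$ and seek $u = \sum_n u_n(x) e^{in\phi}$. The identity
$$V_{\theta,c}\bigl(f_n e^{in\phi}\bigr) = \bigl[(S_\theta + i n c \cos\theta) f_n\bigr] e^{in\phi}$$
decouples $V_{\theta,c} u = f$ into the family $(S_\theta + i\sigma_n) u_n = f_n$ with $\sigma_n := nc\cos\theta$, which is exactly the twisted cohomological equation on $M$ with twist parameter $nc$ in the parametrization of Theorem~\ref{thm:main}. The invariance hypothesis transfers mode-by-mode: for any $(S_\theta + i\sigma_n)$-invariant $D_n \in H^{-s}_h(M)$, the functional $\tilde D(g) := D_n(\hat g_n)$ (with $\hat g_n$ the $n$-th $\phi$-Fourier coefficient of $g$) is $V_{\theta,c}$-invariant and lies in $\Cal I^{s,\nu}_{h_\theta,c}$, so the assumption $\tilde D(f)=0$ forces $D_n(f_n)=0$ for every $n$. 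Lemma~\ref{lemma:aprioribound_smooth} applied with $\sigma = nc$ then yields, for a suitable $p \in (0,1)$ chosen once and independent of $n$, functions $A_n := A_{h, nc} \in L^p(\T, \mathcal L)$ satisfying $|u_n|_r \leq A_n(\theta)|f_n|_s$ for a.e.\ $\theta$, with $\|A_n\|_{L^p} \leq B_h$ uniformly in $n$.

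The hard part will be upgrading this uniform $L^p$-in-$\theta$ bound to a pointwise-in-$\theta$ estimate on $A_n$ with only polynomial growth in $|n|$, so that the Fourier series for $u$ converges in $H^{r,\mu}_h(M\times \T)$. My plan here is to fix $\gamma$ with $1/p < \gamma \leq \nu - \mu$ — which is possible because Lemma~\ref{lemma:aprioribound_smooth} admits $p$ close enough to $1$ that $1/p < \nu - \mu$, using $\nu - \mu > 2$ — then apply Chebyshev to obtain
$$\mathcal L\bigl\{\theta \in \T : A_n(\theta) > (1+|n|)^\gamma\bigr\} \leq B_h^p (1+|n|)^{-\gamma p},$$
and invoke Borel--Cantelli: since $\gamma p > 1$, the right-hand side is summable in $n$, producing a full-measure set $\Theta \subset \T$ and a measurable $M : \Theta \to \R_+$ with $A_n(\theta) \leq M(\theta)(1+|n|)^\gamma$ for every $n \in \Z$ and every $\theta \in \Theta$.

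Assembling the estimate for $\theta \in \Theta$, Parseval in the $\phi$ variable together with $\mu + \gamma \leq \nu$ gives
$$|u|_{r,\mu}^2 \leq C \sum_{n \in \Z} (1+|n|)^{2\mu} |u_n|_r^2 \leq C\, M(\theta)^2 \sum_n (1+|n|)^{2(\mu+\gamma)} |f_n|_s^2 \leq C\, M(\theta)^2 |f|_{s,\nu}^2,$$
which delivers the bound with $C^{(\mu,\nu)}_{r,s}(\theta,c) \sim M(\theta)$. The tension between $\gamma p > 1$ and $\gamma \leq \nu - \mu$ is precisely what forces the threshold $\nu - \mu > 2$ in the hypothesis, while the threshold $s - r > 3$ is inherited directly from Lemma~\ref{lemma:aprioribound_smooth}.
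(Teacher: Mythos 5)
Your proposal is correct and follows essentially the same route as the paper: Fourier decomposition in $\phi$ reducing $V_{\theta,c}u=f$ to the family $(S_\theta+i\,nc\cos\theta)u_n=f_n$, mode-by-mode transfer of the vanishing hypothesis via the embedding of $\mathcal I^s_{h_\theta,\sigma^{c,n}_\theta}$ into $\mathcal I^{s,\nu}_{h_\theta,c}$, the uniform-in-$n$ $L^p(\T)$ bound on the solution constants $A_n$ from Lemma~\ref{lemma:aprioribound_smooth}, a Chebyshev step to obtain pointwise-in-$\theta$ polynomial control in $n$, and Parseval in $\phi$ to sum up. The only deviation is in the bookkeeping of the Chebyshev/Borel--Cantelli step: the paper fixes the bad-set measure to decay like $\epsilon/n^2$ and obtains $A_n(\theta)\le C\epsilon^{-1/p}n^{2/p}$ on a set of measure $\ge 1-C\epsilon$, while you choose a threshold $(1+|n|)^\gamma$ with $\gamma p>1$ arbitrary and invoke Borel--Cantelli directly to get $A_n(\theta)\le M(\theta)(1+|n|)^\gamma$ on a single full-measure set. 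Your version is in fact slightly sharper: it requires only $\nu-\mu>1/p$ (hence $\nu-\mu>1$ as $p\to1^-$) rather than the paper's $\nu-\mu>2/p$, so your closing sentence misattributes the source of the $\nu-\mu>2$ hypothesis --- it is the paper's convenient but non-optimal choice of the $n^{-2}$ decay rate, not a genuine tension in your estimates, that produces the stated threshold. Since the stated hypothesis $\nu-\mu>2$ is stronger than what your argument needs, the corollary is still proved.
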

\begin{proof} By the Fourier decomposition with respect to the circle action, the argument is reduced
to proving the existence of solutions for the cohomological equations
\begin{equation}
\label{eq:CE_n}
(S_\theta +  2\pi \imath c n \cos\theta ) u_n =f_n\,.
\end{equation}
For $n=0$ the above equation reduces to the cohomological equation for the translation flow on $M$, so that
the result already follows from \cite{F07}.
For every $n\in \N\setminus \{0\}$,  let $\sigma^{c,n}:= 2\pi c n\in \R$. The (finite dimensional) space $\Cal I^s_{h_\theta, \sigma^{c,n}_{\theta}} \subset H^{-s}_h(M)$ of twisted $(S_\theta +  \imath \sigma^{c,n}_\theta)$-invariant distributions embeds as a subspace of the space $\Cal I^{s, \nu}_{h_\theta,c} \subset H^{-s, -\nu}_h(M\times \T)$ of $V_{\theta,c}$-invariant distributions, for all $\nu\in \N$, by the formula
$$
D \left ( \sum_{n\in \Z}  f_n  e^{2\pi \imath \phi}       \right)  =  D(f_n)\,.
$$
By Theorem \ref{thm:GCEsmooth} there exist  constants $C_{r,s}>0$ and $p\in(0,1)$ and, for every $\epsilon >0$, there exists a full measure 
set $\Cal F_{c,n}(\epsilon) \subset \T$ of measure  at least $1 -\epsilon/n^2$, such that  for all 
$\theta \in \Cal F_{c,n}(\epsilon)$, for every $f_n \in \Cal I^s _{h_\theta, \sigma^{c,n}_\theta}$ there exists a solution $u_n\in H^r_h(M) \cap H^0_n$ of the cohomological equation~\eqref{eq:CE_n} which satisfies the Sobolev estimate
$$
\vert  u_n \vert _r \leq   C_{r,s}  \epsilon ^{-1/p} n^{2/p}  \vert  f_n \vert_s \,.
$$
In fact, the above claim follows immediately from Theorem~\ref{thm:GCEsmooth}.

\noindent From the claim it follows that  for all functions $f \in H^{s,\nu}_h(M\times \T)$ with $\nu >2/p$,  such that 
$f_n \in \Cal I^{s,\nu} _{h_\theta, \sigma^{c,n}_\theta}$ for all $n\not =0$, for all $\theta \in \Cal F_{c}(\epsilon):=  
\cap_{n\not=0}  \Cal F_{c,n}(\epsilon)$ the function $u = \sum_{n\not =0} u_n \in H^{r, \nu - 2/p }(M)$ 
is a solution of the cohomological equation  $V_{\theta, c} u =f$. Since, for any $\epsilon>0$, the set 
$\Cal F_{c}(\epsilon)$ has Lebesgue measure at least $1- C \epsilon$ with $C=\sum_{n\not=0} 1/n^2$,
the argument is complete.
\end{proof}

\begin{proof} [Proof of Theorem~\ref{thm:main_bis}] 

The space $\Cal I^{s,\nu}_{h_\theta,c}$ of $V_{\theta, c}$-invariant  distributions is generated by the
union of subspaces $\Cal I^s _{h_\theta, \sigma^{c,n}_\theta}$ over all $n\in \Z$. The statement of the theorem then follows from Corollary~\ref{cor:3dim} by Fubini's theorem.
\end{proof}

\begin{proof} [Proof of Corollary~\ref{cor:main}]
For any $\phi_0\in \T$, let $M_{\phi_0} = M \times \{\phi_0\} \subset M\times \T$. The return map of the flow
of the vector field $X_{\theta,c}$ to the transverse surface $M_{\phi_0}\equiv M$ is smoothly conjugate to the
time-$1/c$ map $\Phi_{\theta,c}^{1/c}$ of the translation flow generated by the horizontal vector field $S_\theta$ 
on $M$.  Since the return time function is constant (equal to $1$), it is possible to derive results on the cohomological equation for the return (Poincar\'e) map (the time-$1/c$ map) from results on the cohomological
equation for the flow. In fact, the procedure is as follows.  Let $\Phi_{\theta, c}^\R$ denote the flow of
the vector field $X_{\theta, c}$ on $M\times \T$. Let $ \chi \in C^\infty(\T)$ be a smooth function with
integral equal to $1$ supported on a closed interval $I \subset \T\setminus \{\phi_0\}$. Let $F(f)\in H^{s, \infty}_h(M\times \T)$ be the function defined as follows:
$$
F(f) \circ \Phi_{\theta, c}^t (x, \phi_0)  =   \begin{cases}    f(x) \chi (t)\,,     \quad &\text{ for }   t \in I, \\
0 \,,     \quad &\text{ for }   t \not \in I \,.  \end{cases} 
$$
Let  $f\in H^s_h(M)$ and let us assume that  the cohomological equation 
$ X_{\theta, c} U = F(f)$ has a solution  $U\in H^{r, \mu}_h(M\times \T)$. Then the restriction
$u = U\vert M_{\phi_0}$ is a solution of the cohomological equation $u \circ \Phi^{1/c}_{\theta,c} -u =f$ for the
time-$1/c$ map. In fact, for all $x\in M$, we have
$$
u \circ \Phi^{1/c}_{\theta,c} -u  = \int_0^{1/c}  X_{\theta, c} U \circ \Phi_{\theta, c}^t  dt  = 
\int_0^{1/c}  f  \chi (\phi_0 + c t)  dt = f(x) 
$$
By the Sobolev trace theorem, for any $\mu > 1/2$, the restriction $U\vert M_{\phi_0}$ of a function 
$U\in H^{r, \mu}_h (M\times \T)$ is a function $u\in H^r_h(M)$ and there exists $C_\mu>0$
such that
$$
\vert u  \vert_r  \leq  C_\mu  \Vert  U  \Vert_{ H^{r,\mu}_h(M\times \T)}\,.
$$
The result then follows from Theorem~\ref{thm:main_bis}. In fact, for every $X_{\theta,c}$-invariant distribution 
$D \in \Cal I^{s, \nu}_{h_\theta,c} \subset H^{-s,-\nu}_h(M\times \T)$ we define the distribution $D_M \in H^{-s}_h(M)$ as
$$
D_M(f)  :=  D (F(f) ) \,.
$$ 
By Theorem~\ref{thm:main_bis} it follows that, for $D_M(f)=0$ for all $D \in \Cal I^{s, \nu}_{h_\theta,c}$
then there exists a solution $U\in H^{r, \mu}_h(M\times \T)$ of the cohomological equation $X_{\theta,c} U=F(f)$,
hence a solution $u=U\vert M \in H^r_h(M)$ of the equation  $u\circ \Phi^{1/c} _\theta -u = f$. 
Finally, we have that for all $u\in H^{\infty}_h(M)$ such that $u\circ \Phi^{1/c} _\theta -u \in H^s_h(M)$ we have
$$
D_M (u\circ \Phi^{1/c} _{\theta,c} -u )= D (F(u\circ \Phi^{1/c} _{\theta,c} -u)) =D( F(u) \circ \Phi_{\theta, c}^{1/c} -  F(u) ) =0 \,,
$$
since $D \in \Cal I^{s, \nu}_{h_\theta,c}$ is by assumption $X_{\theta, c}$-invariant. 

\end{proof}


\begin{thebibliography}{20}
\bibitem[Ad]{Ad} A.~Adam, Transfer operators and horocycle averages on closed manifolds, preprint, 
arXiv:1809.04062.
\bibitem[AG]{AG} S.~H.~Aranson and V.~Z.~Grines, On some invariants of dynamical 
systems on two-dimensional manifolds (necessary and sufficient conditions
for the topological  equivalence of transitive dynamical systems),
{\it Mat. Sb.} \textbf {90}(132) (1973), 372-402 (Russian). Transl. in {\it Math. USSR Sb.}
\textbf {19} (1973), 365--393. 
\bibitem[BS14]{BS14} A.~I.~Bufetov and B.~Solomyak, On the modulus of continuity for spectral measures in substitution dynamics,  {\it Adv. Math.} {\bf 260} (2014), 84--129.
\bibitem[BS18a]{BS18a} \bysame, The H\"older property for the spectrum of translation flows in genus two, {\it Israel J. Math.}  {\bf 223} (1) (2018), 205--259.
\bibitem[BS18b]{BS18b} \bysame, On ergodic averages for parabolic product flows, {\it Bull. Soc. Math. France}
\textbf{146} (4) (2018), 675--690.
\bibitem[BS18c]{BS18c} \bysame, A spectral cocycle for substitution systems and translation flows, preprint, arXiv:1802.04783.
\bibitem[BS19]{BS19} \bysame, H\"older regularity for the spectrum of translation flows, preprint,  arXiv:1908.09347.
\bibitem[CE]{CE}  J.~Chaika and A.~Eskin, Every flat surface is Birkhoff and Oseledets generic in almost every direction, {\it J. Mod. Dynam.}  \textbf{9} (2015),  1 -- 23.
\bibitem[EM]{EM} A.~Eskin and M. Mirzakhani , Invariant and stationary measures for the $\SL(2,\R)$ action on Moduli space, {\it Publications math\'ematiques de l'IH\'ES} \textbf{127} (1) (2018), 95--324.
\bibitem[EMM]{EMM} A.~Eskin, M. Mirzakhani  and A. Mohammadi, Isolation, equidistribution, and orbit closures for the $\SL(2,\R)$ action on moduli space,   {\it Annals of Mathematics}   \textbf{182} (2015), 673--721.
\bibitem[Ft]{Ft} P.~Fatou, S\'eries trigonom\'etriques et s\'eries de Taylor,
{\it Acta Math.} \textbf {30} (1906), 335--400.
\bibitem[FGL]{FGL} F. Faure, S. Gou\"ezel and E. Lanneau,  Ruelle spectrum of linear pseudo-Anosov maps, {\it Journal de l'\'Ecole polytechnique--Math\'ematiques} \textbf{6} (2019), 811--877.
 \bibitem[FG18]{FG18} F.~Faure and C.~Guillarmou, Horocyclic invariance of Ruelle resonant states for contact Anosov flows in dimension $3$, {\it Math. Res. Lett.}
\textbf{25} (5) (2018), 1405 --1427.
\bibitem[Fi]{Fi} S.~Filip, Semisimplicity and rigidity of the Kontsevich-Zorich cocycle, {\it Invent. Math.} \textbf{205} (3), (2016), 617--670.
\bibitem[FlaFo03]{FlaFo03}  L.~Flaminio and G.~Forni, Invariant Distributions and Time
    Averages for Horocycle Flows, {\it Duke Math.  J.}, \textbf {119} (3) (2003), 465--526.
\bibitem[FlaFo06]{FlaFo06} \bysame,   Equidistribution of nilflows and applications to theta sums, 
{\it Ergodic Theory Dynam. Systems}~\textbf{26} (2) (2006), 409--433.
\bibitem[FlaFo07]{FlaFo07} \bysame,  On the cohomological equation for nilflows, {\it J. Mod. Dynam.}
\textbf{1} (1)  (2007), 37--60.
\bibitem[FlaFo14]{FlaFo14} \bysame, On effective equidistribution for higher step nilflows, preprint,  arXiv:1407.3640v1.
\bibitem[FFT16]{FFT16}  L.~Flaminio, G.~Forni and J. Tanis, Effective equidistribution of twisted horocycle flows and horocycle maps, {\it Geometric and Functional Analysis} \textbf{26 (5)},1359--1448.
\bibitem[F97]{F97} G.~Forni, Solutions of the Cohomological Equation for Area-Preserving Flows on Compact Surfaces of Higher Genus,
 {\it Ann. of Math.} \textbf{146}(2) (1997), 295--344. 
 \bibitem[F02]{F02}
\bysame, Deviation of ergodic averages for area-preserving flows on
  surfaces of higher genus, {\it Ann. of Math.} (2) \textbf{155} (2002), no.~1,
  1--103.
 \bibitem[F07]{F07} \bysame, Sobolev regularity of solutions of the cohomological equation,  {\it Ergodic Theory and Dynamical Systems}, 1-105. doi:10.1017/etds.2019.108.
 \bibitem[F19]{F19} \bysame, Twisted translation flows and effective weak mixing, preprint, arXiv:1908.11040v1.
 \bibitem[GL]{GL} P.~Giulietti and C.~Liverani,  Parabolic dynamics and Anisotropic Banach spaces, 
 {\it J. Europ. Math. Soc.} \textbf{21} (9) (2019), 2793--2858. Published online: 2019-05-20
 DOI: 10.4171/JEMS/892.
 \bibitem[GT12]{GT12} B.~Green and T.~Tao, The quantitative behaviour of polynomial orbits on nilmanifolds, {\it Ann. of Math.} \textbf{175} (2012), 465--540.
 \bibitem[HL]{HL} G.~H.~Hardy and J.~E.~Littlewood, A maximal theorem with
function-theoretic applications, {\it Acta Math.} \textbf {54} (1930), 81--116.
\bibitem[LM]{LiMa}
J.~L. Lions and E.~Magenes, {\it Probl\`emes aux limites non homog\`enes et
  applications, vol. 1}, Dunod, Paris, 1968.
  \bibitem[Ma]{Ma} A.~G.~Maier, Trajectories on the closed orientable surfaces, {\it Mat. Sb.} 
\textbf {12(54)}(1943), 71--84 (Russian).
\bibitem[MMY05]{MMY05} S. Marmi, P. Moussa and J.-C. Yoccoz, The cohomological equation for Roth-type
interval exchange maps, {\it J. of the AMS}  \textbf{18} (4) (2005), 823--872.
\bibitem[MY16]{MY16} S. Marmi and J.-C. Yoccoz,    H\"older Regularity of the Solutions of the Cohomological Equation for Roth Type Interval Exchange Maps, {\it Commun. Math. Phys.}  \textbf{344} (1)  (2016) 117--139. https://doi.org/10.1007/s00220-016-2624-9
  \bibitem[Nel59]{Ne59} E.~Nelson, Analytic vectors, {\it Ann. of Math.} (2) \textbf{70} (1959),
  572--615.
\bibitem[Rz]{Rz} F.~Riesz, \"Uber die Randwerte einer analytischen Funktion,
{\it Math. Z.} \textbf {18} (1923), 87--95.
\bibitem[Rd]{Rd}  W.~Rudin, {\it Real \& Complex Analysis},  McGraw-Hill Book Company, New York, 1987  (third edition).
\bibitem[Sm]{Sm} V.~Smirnov, Sur les valeurs limites des fonctions r\'eguli\`eres
\`a l'int\'erieur d'un cercle, {\it J. de la Soc. Physico-Math\'ematique de
Leningrad} \textbf{2} (1929), 22--37.
\bibitem[SW]{SW}  E.~M.~Stein and G.~Weiss, {\it Introduction to Fourier
Analysis on Euclidean Spaces}, Princeton Univ. Press, Princeton, 1971.
\bibitem [Ta12]{Ta12} J.~Tanis, The Cohomological Equation and Invariant
 Distributions for Horocycle Maps,  {\it Ergodic Theory and Dynamical
 systems}, \textbf{12} (2012), 1--42. 
\bibitem[Yo]{Yo} K.~Yosida, {\it Functional Analysis},  Springer-Verlag, New York Berlin, 1980  (sixth edition).
\bibitem[Zy]{Zy} A.~Zygmund, {\it Trigonometric Series}, Cambridge Univ. Press, Cambridge, 1959 
(reprinted in 1990).
\end{thebibliography}
\end{document}